\def\draftdate{May 5, 2015}
\date{\draftdate}
\DeclareMathAlphabet{\mathscr}{U}{rsfs}{m}{n}
\newcommand{\B}{j}
\newcommand{\G}{\mathbb{T}}
\newcommand{\an}{a\ }
\newcommand{\cyc}{t}
\newcommand{\dPhi}[1]{\bL\Phi^{#1}}
\let\iso\cong
\let\sma\wedge
\renewcommand{\to}{\mathchoice{\longrightarrow}{\rightarrow}{\rightarrow}{\rightarrow}}
\newcommand{\from}{\mathchoice{\longleftarrow}{\leftarrow}{\leftarrow}{\leftarrow}}
\newcommand{\overto}[1]{\xrightarrow{\,#1\,}}
\newcommand{\phat}{^{\scriptscriptstyle\wedge}_{p}}
\let\catsymbfont\mathcal
\newcommand{\aD}{{\catsymbfont{D}}}
\newcommand{\aF}{{\catsymbfont{F}}}
\newcommand{\aI}{{\catsymbfont{I}}}
\newcommand{\aU}{{\catsymbfont{U}}}
\newcommand{\aT}{{\catsymbfont{T}}}
\newcommand{\bC}{{\mathbb{C}}}
\newcommand{\pC}{{\mathbb{C}_{p}}}
\newcommand{\bI}{{\mathbb{I}}}
\newcommand{\bJ}{{\mathbb{J}}}
\newcommand{\bL}{{\mathbb{L}}}
\newcommand{\bN}{{\mathbb{N}}}
\newcommand{\bR}{{\mathbb{R}}}
\newcommand{\bZ}{{\mathbb{Z}}}
\newcommand{\sI}{\mathscr{I}}
\newcommand{\sJ}{\mathscr{J}}
\def\quickop#1{\expandafter\DeclareMathOperator\csname
#1\endcsname{#1}}
\let\equalizer=\Eq
\let\hoequalizer=\hoEq
\DeclareMathOperator*\lcolim{colim}
\DeclareMathOperator*\lholim{holim}
\newtheorem{thm}{Theorem}
\numberwithin{thm}{section}
\newtheorem{cor}[thm]{Corollary}
\newtheorem{lem}[thm]{Lemma}
\newtheorem{prop}[thm]{Proposition}
\theoremstyle{definition}
\newtheorem{defn}[thm]{Definition}
\newtheorem{cons}[thm]{Construction}
\theoremstyle{remark}
\newtheorem{rem}[thm]{Remark}
\newtheorem{example}[thm]{Example}
\let\c@equation\c@thm\makeatother
\def\texorpdfstring#1#2{#1}\message{No texorpdfstring^^J}\fi
\newcommand{\term}[1]{\textit{#1}}
\begin{document}

\title[Cyclotomic spectra]%
{The homotopy theory of cyclotomic spectra\expandafter\footnotemark}

\author{Andrew J. Blumberg}
\address{Department of Mathematics, The University of Texas,
Austin, TX \ 78712}
\email{blumberg@math.utexas.edu}
\author{Michael A. Mandell}
\address{Department of Mathematics, Indiana University,
Bloomington, IN \ 47405}
\email{mmandell@indiana.edu}

\subjclass[2000]{Primary 19D55. Secondary 18G55, 55Q91.}
\keywords{Topological cyclic homology, cyclotomic spectrum, model
category, {ABC} category}

\begin{abstract}
We describe spectral model category structures on the categories of
cyclotomic spectra and $p$-cyclotomic spectra (in orthogonal spectra)
with triangulated homotopy categories.  We show that the functors $TR$
and $TC$ are corepresentable in these categories.  Specifically, the
derived mapping spectrum out of the sphere spectrum in the category of
cyclotomic spectra corepresents the finite completion of $TC$ and the 
derived mapping spectrum out of the sphere spectrum in the category of
$p$-cyclotomic spectra corepresents the $p$-completion of $TC(-;p)$.
\end{abstract}

\maketitle

\footnotetext{This is a last draft version (\draftdate) of an article whose final
version was first published in
\textit{Geometry \& Topology} in 2015, published by Mathematical
Sciences Publishers.\par 
\copyright\ 2015 Andrew J. Blumberg and Michael A. Mandell}

\section{Introduction}

Topological cyclic homology ($TC$) has proved to be an enormously
successful tool for studying algebraic $K$-theory and $K$-theoretic
phenomena.  After finite completion, relative $K$-theory for certain
pairs is equivalent to relative $TC$ via the cyclotomic trace map and
$TC$ can be computed using the methods of equivariant stable homotopy
theory.

The construction of $TC$ begins with a cyclotomic spectrum; this is
\an $\G$-equivariant spectrum equipped with additional structure that
mimics the structure seen on the free suspension spectrum of the free
loop space, $\Sigma^{\infty}_{+} \Lambda X$.  Here $\G$ denotes the
circle group of unit complex numbers and $\Lambda X$ the $\G$-space of
maps from $\G$ to $X$.  The $n$-th
root map induces an isomorphism $\rho_{n} \colon \G \iso \G / C_{n}$,
and this induces an isomorphism of $\G$-spaces $\rho_{n}^{*}(\Lambda
X)^{C_{n}} \iso \Lambda X$, where $C_n$ is the cyclic subgroup of
order $n$ and $\rho_{n}^{*}$ is the change of
group functor along $\rho_{n}$.  This isomorphism then gives rise to
an equivalence of $\G$-spectra $\rho_{n}^{*}\Phi^{C_{n}}
\Sigma^{\infty}_{+} \Lambda X \simeq \Sigma^{\infty}_{+} \Lambda X$,
where $\Phi^{C_{n}}$ denotes the derived geometric fixed point functor, and
$\rho_{n}^{*}$ denotes both change of groups and change of universe.

Although $TC$ has been around for over twenty years, there has been
relatively little investigation of the nature of cyclotomic spectra or
the homotopy theory associated to the category of cyclotomic spectra.
Recently, in the course of proving the degeneration of the
noncommutative ``Hodge to de Rham'' spectral sequence, Kaledin has
described the close connection between cyclotomic spectra
and Dieudonn\'e modules and the relationship 
between TC and syntomic cohomology.  This work led him to make
conjectures~\cite[\S 7]{Kaledin-ICM2010} regarding the structure of
the category of cyclotomic spectra and its relationship to $TC$.  The
purpose of this paper is to prove these conjectures.

After a review of background, we begin in Section~\ref{sec:def} by
setting up a point-set category of cyclotomic spectra as a category of
orthogonal $\G$-spectra \cite{MM} with extra structure; see
Definition~\ref{defn:cyc}.  Topological Hochschild homology ($THH$)
provides the primary source of examples of cyclotomic spectra.  We
also set up a (significantly simpler) point-set category of
$p$-cyclotomic spectra: A $p$-cyclotomic spectrum is an orthogonal
$\G$-spectrum $X$ together with a map of $\G$-spectra
\[
\cyc \colon \rho_{p}^{*}\Phi^{C_{p}}T\to T
\]
from the (point-set) geometric fixed points of $T$ back to $T$ such
that the composite in the homotopy category from the derived geometric
fixed points is an $\aF_{p}$-equivalence, i.e., induces an isomorphism
on homotopy groups $\pi^{C_{p^{n}}}_{*}$ for all $n\geq 0$
(Definition~\ref{defn:pcyc}).  Since in most examples one works with
the $p$-cyclotomic structure, in the remainder of this introduction we
focus on this case for expositional simplicity.

Since the definition of cyclotomic spectra includes a
homotopy-theoretic constraint, it is unreasonable to expect any
category of cyclotomic spectra to be closed under general (or even
finite) limits or colimits.  Thus, we cannot expect a model category
of cyclotomic spectra; nevertheless, we show that our category of
cyclotomic spectra admits a \textit{model structure} in the sense of
\cite[1.1.3]{Hovey-ModelCat}: It has subcategories of cofibrations,
fibrations, and weak equivalences that satisfy Quillen's closed model
category axioms.  Moreover, the category of cyclotomic spectra admits
finite coproducts and products, pushouts over cofibrations, and
pullbacks over fibrations.  These limits and colimits suffice to
construct the entirety of the homotopy theory set up in Chapter~I of
\cite{Quillen-HA}, and much of the abstract homotopy theory developed
since; e.g., see~\cite{ABC} where this is worked out in even
greater generality.  As an example, since the category of cyclotomic
spectra additionally admits filtered colimits along cofibrations, we
can deduce that it has all homotopy colimits.

\begin{defn}\label{defn:modelstar}
A \term{model* category} is a category with a model structure
\cite[1.1.3]{Hovey-ModelCat} and which admits finite coproducts and
products, pushouts over cofibrations, and pullbacks over fibrations. 
\end{defn}

One big advantage of model* categories over model categories is that
any subcategory of a model* category that is closed under weak
equivalences, finite products and coproducts, pushouts over
cofibrations, and pullbacks over fibrations is a model* category with
the inherited model structure.  We take advantage of this in the proof
of the following theorem, which is our main theorem on the homotopy
theory of $p$-cyclotomic spectra.

\begin{thm}\label{thm:modelstar}
The category of $p$-cyclotomic spectra is a model* category with weak
equivalences the weak equivalences of the underlying non-equivariant
orthogonal spectra and with fibrations the $\aF_{p}$-fibrations (see
\cite[IV.6.5]{MM} or Theorem~\ref{thm:osflocalmod} below) of the
underlying orthogonal $\G$-spectra, where $\aF_{p}=\{C_{p^{n}}\}$.
\end{thm}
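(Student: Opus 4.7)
The plan is to exploit the subcategory principle stated just before the theorem: I will first produce an ambient model* category whose objects are pairs $(T,\cyc)$ consisting of an orthogonal $\G$-spectrum and a map $\cyc\colon\rho_p^{*}\Phi^{C_p}T\to T$ with no equivalence constraint imposed (call these pre-$p$-cyclotomic spectra), and then verify that the full subcategory of genuine $p$-cyclotomic spectra---those for which the composite $\dPhi{C_p}T\to T$ is an $\aF_p$-equivalence---is closed in the ambient category under weak equivalences, finite coproducts and products, pushouts over cofibrations, and pullbacks over fibrations. The model* structure on $p$-cyclotomic spectra then falls out as the inherited one.

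For the ambient structure, I would begin with the $\aF_p$-model structure on orthogonal $\G$-spectra of Theorem~\ref{thm:osflocalmod} and produce a mixed variant (in the spirit of Cole's mixing theorem) whose fibrations are still the $\aF_p$-fibrations but whose weak equivalences are the weaker non-equivariant equivalences. I would then lift this structure along the forgetful functor from pairs $(T,\cyc)$ to orthogonal $\G$-spectra, whose left adjoint is the free $\rho_p^{*}\Phi^{C_p}$-algebra construction $T\mapsto \bigvee_{k\ge 0}(\rho_p^{*}\Phi^{C_p})^{k}T$, using the standard transfer criterion. This makes pre-$p$-cyclotomic spectra an ambient model* category with the weak equivalences and fibrations predicted by the statement of the theorem.

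The subcategory closure arguments then proceed as follows. For closure under non-equivariant equivalences: if $f\colon(T,\cyc)\to(T',\cyc')$ is such an equivalence and $(T,\cyc)$ is genuinely $p$-cyclotomic, then $\dPhi{C_p}f$ is an $\aF_p$-equivalence, and compatibility of $f$ with $\cyc$ and $\cyc'$ transports the $\aF_p$-equivalence condition on the composite to $(T',\cyc')$. Finite coproducts, finite products, and pullbacks along $\aF_p$-fibrations are computed on the underlying $\G$-spectra, and both $\Phi^{C_p}$ and its derived version preserve them in the relevant generality, so the equivalence condition again transports by a gluing lemma for $\aF_p$-equivalences. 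The most delicate case---and the main obstacle---is pushouts along cofibrations: here I rely on the good interaction of the point-set $\Phi^{C_p}$ with $\aF_p$-cofibrations (so that $\Phi^{C_p}$ of a cofibration between cofibrant objects computes the derived functor on the pushout), combined with a gluing argument for $\aF_p$-equivalences along cofibrations. Establishing this good behavior, together with carrying out the transfer step for the ambient structure (which requires a path object or a careful small object argument to certify the lifted acyclic fibrations), is where the bulk of the technical work will lie.
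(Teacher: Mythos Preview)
Your overall architecture---build a model* structure on pre-$p$-cyclotomic spectra and then restrict to the full subcategory of $p$-cyclotomic spectra---matches the paper's.  The gap is in the choice of weak equivalences on the ambient category.  You propose to equip pre-$p$-cyclotomic spectra with the \emph{non-equivariant} equivalences while keeping the $\aF_p$-fibrations.  This combination is not a model structure: the pair (acyclic cofibration, fibration) is a weak factorization system, so fixing the $\aF_p$-fibrations forces the acyclic cofibrations to be exactly the $\aF_p$-acyclic cofibrations, and hence every acyclic cofibration is an $\aF_p$-equivalence; but there are $\aF_p$-cofibrations which are non-equivariant equivalences without being $\aF_p$-equivalences, contradicting the identity (acyclic cofibration) $=$ (cofibration) $\cap$ (weak equivalence).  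Cole-style mixing does not rescue this, since it enlarges the class of cofibrations, not the class of weak equivalences relative to fixed fibrations.

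Even setting that aside, your closure-under-weak-equivalences step fails.  If $f\colon T\to T'$ is only a non-equivariant equivalence of pre-$p$-cyclotomic spectra, then $\dPhi{C_p}f$ has no reason to be an $\aF_p$-equivalence (or anything else): a non-equivariant equivalence controls only $\pi^{\Phi C_1}_*$, not $\pi^{\Phi C_{p^n}}_*$ for $n\geq 1$.  Concretely, one can wedge a $p$-cyclotomic spectrum with a non-equivariantly contractible pre-$p$-cyclotomic spectrum having nonzero $\pi^{\Phi C_p}_*$ (e.g., $\Sigma^\infty\widetilde{E}\{e\}$ with zero structure map) to produce a non-equivariant equivalence whose target is not $p$-cyclotomic.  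The paper avoids both issues by taking the ambient weak equivalences to be the $\aF_p$-equivalences (Theorem~\ref{thm:modelprecyc}); closure of $p$-cyclotomic spectra under $\aF_p$-equivalences is then immediate from the definition, and the closure under the remaining operations is Proposition~\ref{prop:pcyclimits}.  The description of the weak equivalences in Theorem~\ref{thm:modelstar} as non-equivariant equivalences is a \emph{consequence} of the cyclotomic condition---on a $p$-cyclotomic spectrum the structure maps identify all $\pi^{\Phi C_{p^n}}_*$ with $\pi^{\Phi C_1}_*$, so an $\aF_p$-equivalence between $p$-cyclotomic spectra is the same as a non-equivariant one---not an input to the construction.
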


The model* category of $p$-cyclotomic spectra has additional
structure.  Clearly, it inherits an enrichment over spaces from the
category of orthogonal $\G$-spectra.  We show in
Section~\ref{sec:def} that it in fact inherits an enrichment over
non-equivariant orthogonal spectra.  This enrichment is compatible
with the model structure in the sense that the analogue of Quillen's
SM7 axiom holds; see Theorem~\ref{thm:SM7}.  In particular, the
homotopy category of $p$-cyclotomic spectra becomes triangulated with
the usual definition of distinguished triangles, and we
have a good construction of intrinsic mapping spectra.  

\begin{thm}\label{thm:stable}
The model structure on $p$-cyclotomic spectra has an enrichment over orthogonal
spectra.  The homotopy category of $p$-cyclotomic spectra is
triangulated with the shift functor given by suspension and the
distinguished triangles determined by the 
cofiber sequences specified by the model structure
(q.v.~\cite[6.2.6]{Hovey-ModelCat}). 
\end{thm}

In fact, the homotopy type of the mapping spectra turns out to have a relatively
straightforward description in terms of the underlying orthogonal
spectra and the structure map $\cyc$.  See Theorem~\ref{thm:newFp} for
a precise statement.

Finally, $TC(-;p)$ has an intrinsic interpretation in the context of
the homotopy category of $p$-cyclotomic spectra (after $p$-completion).  The
sphere spectrum $S$ has a canonical cyclotomic structure using the
canonical identification of the geometric fixed points of $S$ as $S$ (see
Example~\ref{exa:sphere}).  We can identify the $p$-completion of the
right derived functor of $TC(-;p)$ as the derived mapping spectrum out
of $S$.

\begin{thm}\label{thm:main}
Let $T$ be a $p$-cyclotomic spectrum.  Then the derived mapping
spectrum from the sphere spectrum $S$ to $T$ in the homotopy category
of $p$-cyclotomic spectra becomes naturally isomorphic to the
right derived functor of $TC(T;p)$ after $p$-completion.
Moreover, the natural isomorphism of $p$-completed right derived 
functors is canonical. 
\end{thm}

This confirms the conjecture of Kaledin \cite[7.9]{Kaledin-ICM2010}.
Also, this theorem gives a motivic interpretation of $TC(-;p)$,
viewing the triangulated homotopy category of $p$-cyclotomic spectra
as a category of ``$p$-cyclotomic motives'' associated to
non-commutative schemes (viewed as spectral categories, with $THH$ as
the realization functor).

See Section~\ref{sec:corep} for additional corepresentability results.

\bigskip

The question of developing a homotopy theory for cyclotomic spectra
was first asked by Ib Madsen in a problem session at the Stable
Homotopy Workshop at the Fields Institute in January 1996 and was
suggested to the authors by Lars Hesselholt.  The authors would like
to thank Vigleik Angeltveit for helpful comments.  The first author
thanks Haynes Miller and the MIT Math department for their hospitality.
The first author was supported in part by NSF grants
DMS-0906105, DMS-1151577.
The second author was supported in part by NSF grant
DMS-1105255

\section{Review of orthogonal \texorpdfstring{$\G$}{T}-spectra}

In this section, we give a brief review of the definition of
orthogonal $\G$-spectra and the geometric fixed point functors.  Along
the way we provide some new technical results that are needed in later
sections.  We begin with some preliminaries about the categories of
$\G$-spaces we work with.

We work throughout with the category $\aU$ of compactly generated weak
Hausdorff spaces, the objects of which we call \term{spaces}.  (As we
never use more general topological spaces, this will cause no
confusion.)  We use $\aT$ to denote the category of \term{based
spaces}, which is the undercategory in $\aU$ of the one-point space
$*=\{\{\}\}$.  The category $\aU$ is complete, cocomplete, and cartesian
closed.  The category $\aT$ is complete, cocomplete, and closed
symmetric monoidal under the smash product; we also regard $\aT$ as
enriched over $\aU$ by the forgetful functor, which is lax
symmetric monoidal.  The categories $\G \aU$ and
$\G \aT$ of \term{$\G$-spaces} and \term{based $\G$-spaces} are 
by definition the category of $\G$-objects in $\aU$ and $\aT$, respectively, where $\G$
denotes the circle group, the Lie group of unit complex numbers.
These categories are complete and cocomplete with the
limits and colimits constructed in $\aU$ and $\aT$ respectively.
The categories $\G\aU$ and $\G \aT$ are closed symmetric monoidal,
with product given by the cartesian product and smash product
respectively, and with function objects the function spaces from $\aU$
and $\aT$ endowed with the conjugation $\G$-action.
The $\G$-fixed point functor $\G\aU\to \aU$ and $\G\aT\to \aT$ is
symmetric monoidal and gives $\aU$ and $\aT$ enrichments,
respectively. 

There are several equivalent formulations of the 
category of orthogonal $\G$-spectra.  The simplest definition of
orthogonal $\G$-spectra~\cite[II.2]{MM} turns out to be less
technically convenient for our purposes than the reformulation
in~\cite[\S II.4]{MM} in terms of diagram spaces.  Recall that for a
skeletally small category $\aD$ enriched in $\G\aT$, a
\term{$\aD$-space} is a $\G\aT$-enriched functor from $\aD$ to $\G\aT$ and a
morphism of $\aD$-spaces is an enriched natural transformation of
enriched functors.  The category of $\aD$-spaces then has an
enrichment in $\G\aT$ given by the usual limit formula.  

For brevity, in what follows we write \term{orthogonal
$\G$-representation} to mean finite dimensional real inner product
space with $\G$-action by isometries (and not the isomorphism class of
such an object).  As in \cite[II.4.1]{MM}, for orthogonal
$\G$-representations $V$ and $W$, let $\sI(V,W)$ denote the $\G$-space
of (non-equivariant) linear isometries from $V$ to $W$.  Let $E(V,W)$
denote the subbundle of the product $\G$-bundle $\sI(V,W) \times W$
consisting of the points $(f,x)$ where $x$ is in the orthogonal
complement of $f(V)$.  Let $\sJ_{\G}(V,W)$ denote the Thom $\G$-space
of $E(V,W)$.  Composition of isometries and addition in the codomain
vector space induces composition maps
\[
\sJ_{\G}(W,Z) \sma \sJ_{\G}(V,W) \to \sJ_{\G}(V,Z),
\]
which together with the obvious identity elements
make $\sJ_{\G}$ a category enriched in based $\G$-spaces (with objects
the orthogonal $\G$-representations).  

\begin{defn}[{\cite[II.4.3]{MM}}]\label{defn:orthspec}
The category of \term{orthogonal $\G$-spectra} is the category of $\sJ_{\G}$-spaces.
\end{defn}

As discussed above, the category of orthogonal $\G$-spectra inherits an
enrichment in $\G\aT$.  In addition, the category of orthogonal
$\G$-spectra is a closed symmetric monoidal category under the smash
product constructed in~\cite[II.3]{MM} and in particular has internal
function objects: For $X$,$Y$ orthogonal $\G$-spectra, we let $F_{\G}(X,Y)$
denote the orthogonal $\G$-spectrum of maps (analogous to the
$\G$-space of maps between $\G$-spaces).  We denote its $\G$-fixed
point non-equivariant orthogonal spectrum as $F^{\G}(X,Y)$, which we
regard as the spectrum of $\G$-equivariant maps
from $X$ to $Y$.

We now turn to the discussion of the fixed point functors.  The
advantage of the diagram space definition (Definition~\ref{defn:orthspec})
over the spacewise definition of orthogonal $\G$-spectra is that the
diagram space definition makes it easier
to define the (point-set)
fixed point and geometric fixed point functors.  For $C\leq\G$ a closed
subgroup, consider $\sJ^{C}_{\G}(V,W)$, the $\G/C$-space of $C$-fixed
points of $\sJ_{\G}(V,W)$.  Identity elements and composition in
$\sJ_{\G}$ restrict appropriately, making $\sJ^{C}_{\G}$ a category
enriched in based $\G/C$-spaces.  Moreover, we have an evident
$\G/C$-enriched functor $\tilde{q}_C \colon \sJ_{\G/C}\to
\sJ^{C}_{\G}$ induced by regarding an orthogonal $\G/C$-representation
$V$ as an orthogonal $\G$-representation $q^{*}_{C}V$ via the quotient
map $q_{C} \colon \G\to \G/C$.  Given an orthogonal $\G$-spectrum $X$,
the fixed points $(X(V))^{C}$ form a $\sJ^{C}_{\G}$-space, i.e., a
based $\G/C$-space enriched functor from $\sJ^{C}_{\G}$ to based $\G/C$-spaces.
We can then compose with the enriched functor $\tilde{q}_C$ to obtain
an orthogonal $\G/C$-spectrum.

\begin{defn}[{\cite[V.3.1]{MM}}]\label{defn:catfix}
Let $X$ be an orthogonal $\G$-spectrum.  For $C\leq\G$ a closed subgroup,
let $X^{C}$ be the orthogonal $\G/C$-spectrum defined by $X^{C}(V)=
(X(q^{*}_{C}V))^{C}$, with $\sJ_{\G/C}$-space structure induced via
the enriched functor $\tilde{q}_C$ as above.  We call this functor
$(-)^{C}$ from orthogonal $\G$-spectra to orthogonal $\G/C$-spectra
the (point-set) \term{categorical fixed point functor}.
\end{defn}

We also have a based $\G/C$-space enriched functor $\phi \colon
\sJ^{C}_{\G}\to \sJ_{\G/C}$ which sends the orthogonal
$\G$-representation $V$ to the orthogonal $\G/C$-representation
$V^{C}$.  Enriched left Kan extension along $\phi$ constructs a
functor from $\sJ^{C}_{\G}$-spaces to $\sJ_{\G/C}$-spaces.  Applying
this to the fixed point $\sJ^{C}_{\G}$-space obtained from a
orthogonal $\G$-spectrum, we get the (point-set) geometric fixed point
functor.

\begin{defn}[{\cite[V.4.3]{MM}}]\label{defn:geofix}
Let $X$ be an orthogonal $\G$-spectrum.  For $C\leq\G$ a closed subgroup,
let $\Fix^{C}X$ be the $\sJ^{C}_{\G}$-space defined by
$\Fix^{C}X(V)=(X(V))^{C}$, and let $\Phi^{C}X$ be the orthogonal
$\G/C$-spectrum obtained from $\Fix^{C}X$ by enriched left Kan
extension along the enriched functor $\phi \colon
\sJ^{C}_{\G}\to \sJ_{\G/C}$. We call the functor $\Phi^{C}$ from
orthogonal $\G$-spectra to orthogonal $\G/C$-spectra the (point-set)
\term{geometric fixed point functor}.
\end{defn}

The categorical fixed point functor is a right adjoint \cite[V.3.4]{MM}
and so preserves all limits.  The geometric fixed point functor is
not a left adjoint but has the feel of a left adjoint, preserving all
the colimits preserved by the fixed point functor on based $\G$-spaces; this
includes pushouts over levelwise closed inclusions and sequential
colimits of levelwise closed inclusions.  In particular, the geometric
fixed point functor preserves homotopy colimits. The two functors
also have right-hand and left-hand relationships (respectively) to the fixed
point functor of spaces. The zeroth space (or $n$-th space) of the
categorical fixed point functor is the fixed point space of the zeroth
space (or $n$-th space) of the orthogonal $\G$-spectrum:
\[
X^{C}(\bR^{n}) = (X(\bR^{n}))^{C}.
\]
Likewise, the geometric fixed point functor of a suspension $\G$-spectrum
(or $n$-shift desus\-pension $\G$-spectrum) is the suspension
$\G$-spectrum (or $n$-shift desuspension $\G$-spectrum) of the fixed
point space 
\[
\Phi^{C} (F_{\bR^{n}}A) \iso F_{\bR^{n}} (A^{C})
\]
(using the notation of \cite[II.4.6]{MM}).

It is evident from the definitions that both the categorical fixed
point functor and the geometric fixed point functor have a canonical enrichment in
based spaces. 
For the construction of function spectra 
for cyclotomic spectra in Section~\ref{sec:model}, we need an
enrichment in orthogonal spectra of the geometric fixed point functors
for finite subgroups.  To obtain this, we now describe a natural 
transformation   
\begin{equation}\label{eq:enriched}
F^{\G}(X,Y)\to
F^{\G/C}(\Phi^{C}X,\Phi^{C}Y)\iso
F^{\G}(\rho^{*}\Phi^{C}X,\rho^{*}\Phi^{C}Y)
\end{equation}
Here the isomorphism $F^{\G/C}(\Phi^{C}X,\Phi^{C}Y)\iso
F^{\G}(\rho^{*}\Phi^{C}X,\rho^{*}\Phi^{C}Y)$ on the
right is induced from the evident space-level isomorphism induced by
the $(\#C)$-th root isomorphism $\rho \colon \G\to \G/C$ (the
unique orientation preserving isomorphism of compact connected
$1$-dimensional Lie groups).  The left-hand map, on the other hand,
arises as a direct consequence of the following theorem (as we explain
below).

\begin{thm}\label{thm:phicommute}
For $X$ an orthogonal $\G$-spectrum, $A$ a cofibrant
non-equi\-variant orthogonal spectrum, and $C$ a closed subgroup of
$\G$, the canonical natural map
\[
\Phi^{C}(X) \sma A\to \Phi^{C}(X \sma A)
\]
is an isomorphism.
\end{thm}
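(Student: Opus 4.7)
The plan is to prove this by reducing to a generating class of cofibrant $A$ and then bootstrapping along the cell structure. First I would observe that, as functors of $A$ from non-equivariant orthogonal spectra to orthogonal $\G/C$-spectra, both $\Phi^{C}X \sma (-)$ and $\Phi^{C}(X \sma -)$ preserve pushouts over levelwise closed inclusions and sequential colimits of levelwise closed inclusions: the smash product preserves these colimits in each variable, and so does $\Phi^{C}$ by the discussion following Definition~\ref{defn:geofix}. Since the canonical comparison map is natural in $A$, if we can prove the map is an isomorphism when $A$ is a free orthogonal spectrum $F_{V}S^{0}$ (and, more generally, $F_{V}K$ for $K$ a based CW complex), then the cellular filtration of a cofibrant $A$ built from the generating cofibrations $F_{V}(S^{n-1}_{+}) \to F_{V}(D^{n}_{+})$ promotes the isomorphism to all cofibrant $A$.

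For the base case, take $A = F_{V}K$ where $V$ is an inner product space (viewed as a trivial-action orthogonal $\G$-representation) and $K$ a based space with trivial $\G$-action. Since $(-)\sma F_{V}K$ is the composite of the functor $F_{V}(-)$ and the functor $(-)\sma K$ (up to canonical isomorphism built from the universal property of $F_{V}$), it suffices to check the isomorphism for $A = F_{V}S^{0}$ and for $A = K$ a based CW complex. For based CW complexes the statement reduces to the well-known fact that the geometric fixed point functor commutes with smashing by fixed spaces, so the real content is the case $A = F_{V}S^{0}$. Here we use the identification $\Phi^{C}(F_{V}B) \iso F_{V^{C}}B^{C}$ from the excerpt: for non-equivariant $V$ we have $V^{C} = V$, and a direct unwinding of the smash product and of $\Phi^{C}$ identifies both $\Phi^{C}X \sma F_{V}S^{0}$ and $\Phi^{C}(X \sma F_{V}S^{0})$ with the orthogonal $\G/C$-spectrum obtained by precomposing $\Phi^{C}X$ with $(-) \oplus V$.

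The main obstacle is precisely this last identification. The smash product is defined as a left Kan extension along $\oplus \colon \sJ_{\G} \times \sJ_{\G} \to \sJ_{\G}$, and $\Phi^{C}$ is built as a left Kan extension along $\phi \colon \sJ^{C}_{\G} \to \sJ_{\G/C}$; their commutation rests on the monoidal compatibility $(V \oplus W)^{C} = V^{C} \oplus W^{C}$ together with the fact that, for non-equivariant $V$, the $C$-fixed points of the orthogonal complement bundle $\sJ_{\G}(V,W)$ contribute exactly the piece that matches $\sJ_{\G/C}(V, W^{C})$. I would verify this by writing out both iterated left Kan extensions as coends and checking that the canonical comparison coend map is a homeomorphism on each $V$. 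Once the base case is in hand, the extension to general cofibrant $A$ is routine by the colimit-preservation observation above.
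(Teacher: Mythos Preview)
Your reduction strategy matches the paper's: both sides preserve the relevant colimits in $A$, so one may reduce to $A = F_{V}S^{0}$ and then compare coends. The gap is in your base case. The assertion that $\Phi^{C}X \sma F_{V}S^{0}$ and $\Phi^{C}(X\sma F_{V}S^{0})$ are each ``$\Phi^{C}X$ precomposed with $(-)\oplus V$'' is false: for a general orthogonal spectrum $Y$, the smash product $Y\sma F_{V}S^{0}$ is \emph{not} isomorphic to the shift $\mathrm{sh}^{V}Y$ with $(\mathrm{sh}^{V}Y)(W)=Y(W\oplus V)$. Writing out the coend gives instead $(Y\sma F_{V}S^{0})(W)\iso \sJ(\bR^{n}\oplus V,W)\sma_{O(n)}Y(\bR^{n})$ for $n=\dim W-\dim V$, which carries an extra $O(n)$-induction. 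So the identification you propose does not hold for either side, and you cannot conclude by matching both to a common shift.

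Your final paragraph has the right instinct---write both sides as coends and compare---but underestimates what that entails. After the reduction, the paper shows that the comparison boils down to proving that
\[
\sJ_{E}(W\oplus A,V)\sma_{O(W)^{N}} X(W)^{N}\to
\bigl(\sJ_{G}(\bR^{n}\oplus A,V)\sma_{O(n)} X(\bR^{n})\bigr)^{N}
\]
is a homeomorphism, and this is not a formal coend manipulation: one must analyze what it means for an $O(n)$-orbit $[f,x]$ on the right to be $N$-fixed. The key point (occupying most of the appendix) is that an $N$-fixed orbit is represented by a pair $(f,x)$ for which each $\nu\in N$ acts as $(\nu f,\nu x)=(f\circ(B_{\nu}^{-1}\oplus\id),B_{\nu*}x)$ for a unique $B_{\nu}\in O(n)$; from this one extracts that $f|_{A}$ is $N$-equivariant and $f(\bR^{n})$ is $N$-stable, which allows one to build an explicit inverse $\psi$ by choosing an $N$-equivariant identification $f(\bR^{n})\iso W$. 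Your observation that $(\sJ_{G}(V,W))^{C}\iso \sJ_{G/C}(V,W^{C})$ for trivial $V$ is correct and relevant, but it addresses only the ``free'' factor; the substantive work is controlling the interaction of the $O(n)$-quotient with $N$-fixed points in the presence of the $X(\bR^{n})$ factor.
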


As the previous theorem is a special case of a new foundational
observation about equivariant orthogonal spectra that holds for any
compact Lie group $G$, we state and prove it in the more general
context in Appendix~\ref{app:geneqres}.

To deduce~\eqref{eq:enriched}, we recall that $F^{\G}(X,Y)$ is the
orthogonal $\G$-spectrum with $n$-th space $F^{\G}(X,Y)(\bR^{n})$ the space of
$\G$-equivariant maps from  $X\sma F_{\bR^{n}}S^{0}$ to $Y$.  As the geometric
fixed point functor is enriched in based spaces, we get an induced map
from $F^{\G}(X,Y)(\bR^{n})$ to the
space of $\G/C$-equivariant maps from $\Phi^{C}(X\sma F_{\bR^{n}}S^{0})$ to
$\Phi^{C}Y$, which Theorem~\ref{thm:phicommute} then identifies as the
$n$-th space of $F^{\G/C}(\Phi^{C}X,\Phi^{C}Y)$.  This then assembles
to the map of orthogonal spectra 
in~\eqref{eq:enriched}. 

Finally, for later use we need a new observation on iterated geometric
fixed point functors.  Using the $r$-th root isomorphism $\rho_{r}\colon
\G\to \G/C_{r}$ and the concomitant functor $\rho^{*}_{r}$ from
$\G/C_{r}$-spaces to $\G$-spaces, for \an $\G$-space $A$, we have a
canonical natural identification of $\G$-spaces
\[
\rho^{*}_{mn}A^{C_{mn}} = \rho^{*}_{m}(\rho^{*}_{n}A^{C_{n}})^{C_{m}}
\]
for $m,n \in \bN$.  Using the analogous functor from orthogonal
$\G/C_{r}$-spectra to $\G$-spectra, we have the following orthogonal
spectrum version of this natural transformation for the geometric
fixed point functors.

\begin{prop}\label{prop:logical}
For every $m,n \in \bN$, there is a canonical natural map of
orthogonal $\G$-spectra 
\[
c_{m,n} \colon \rho^{*}_{mn}\Phi^{C_{mn}}X\to
\rho^{*}_{m}\Phi^{C_{m}}(\rho^{*}_{n}\Phi^{C_{n}}X)
\]
making the following diagram commute
\[
\xymatrix@C+1pc{%
\rho^{*}_{mnp}\Phi^{C_{mnp}}X\ar[d]_{c_{m,np}}\ar[r]^{c_{mn,p}}
&\rho^{*}_{mn}\Phi^{C_{mn}}(\rho^{*}_{p}\Phi^{C_{p}}X)
\ar[d]^{c_{m,n}}\\
\rho^{*}_{m}\Phi^{C_{m}}(\rho^{*}_{np}\Phi^{C_{np}}X)
\ar[r]_-{\rho^{*}_{m}\Phi^{m}c_{n,p}}
&\rho^{*}_{m}\Phi^{C_{m}}(\rho^{*}_{n}\Phi^{C_{n}}(\rho^{*}_{p}\Phi^{C_{p}}X)).
}
\]
The map $c_{m,n}$ is an isomorphism when $X=F_{V}A$ or more generally,
when $X$ is cofibrant (q.v.~Theorem~\ref{thm:ossmod}).
\end{prop}

\begin{proof}
In order to construct $c_{m,n}$, it suffices to construct a natural
transformation of $\rho^{*}_{mn}\sJ^{C_{mn}}_{\G}$-spaces from
$\rho^{*}_{mn}\Fix^{C_{mn}} X$ to
$\rho^{*}_{m}\Phi^{C_{m}}(\rho^{*}_{n}\Phi^{C_{n}}X)$.  For this, it
suffices to construct a natural transformation of
$\rho^{*}_{mn}\sJ^{C_{mn}}_{\G}$-spaces from $\rho^{*}_{mn}\Fix^{C_{mn}}X$
to $\rho^{*}_{m}\Fix^{C_{m}}(\rho^{*}_{n}\Phi^{C_{n}}X)$, and for this, it
suffices to construct a natural transformation of
$\rho^{*}_{mn}\sJ^{C_{mn}}_{\G}$-spaces from $\rho^{*}_{mn}\Fix^{C_{mn}}X$
to $\rho^{*}_{m}(\Fix^{C_{m}}\rho^{*}_{n}(\Fix^{C_{n}}X))$.  This is induced by
the space-level identity
\[
\rho^{*}_{mn}(X(V))^{C_{mn}} = \rho^{*}_{m}(\rho^{*}_{n}(X(V))^{C_{n}})^{C_{m}}.
\]
For the diagram, we observe that both composites are ultimately
induced by the same space-level canonical natural isomorphism.  For
$X=F_{V}A$, both $\rho^{*}_{mn}\Phi^{C_{mn}}X$ and
$\rho^{*}_{m}\Phi^{C_{m}}(\rho^{*}_{n}\Phi^{n}X)$ are isomorphic to
$F_{V^{C_{mn}}}A^{C_{mn}}$ and it follows that $c_{m,n}$ is an
isomorphism from the universal property of $F_{V}$.  The isomorphism
for a general cofibrant $X$ then follows from the fact that the functors
$\rho^{*}_{r}$ and $\Phi^{C_{r}}$ preserve pushouts over cofibrations
and sequential colimits of cofibrations.
\end{proof}

\section{Review of the homotopy theory of orthogonal \texorpdfstring{$\G$}{T}-spectra}

In the previous section we reviewed the definition of 
orthogonal $\G$-spectra and the construction of the point-set fixed
point functors.  In this section, we give a brief review of relevant
aspects of the homotopy theory of orthogonal $\G$-spectra, including
the homotopy groups, the model structures, and the derived functors of
the fixed point functors. 

We begin with the level model structure on orthogonal $\G$-spectra,
which is mainly a tool for construction of the stable
model structure, but in the non-equivariant setting plays a role in
later sections allowing us to simplify hypotheses in certain
statements (see Theorems~\ref{thm:newFp} and~\ref{thm:newF} and the
arguments in Section~\ref{sec:corep}). 
In the level model structure, the weak equivalences are the
\term{level equivalences}, which are the maps $X\to Y$ that are
equivariant weak equivalences $X(V)\to Y(V)$ for all orthogonal
$\G$-representations $V$, or in other words, the maps that are
non-equivariant weak equivalences $X(V)^{C}\to Y(V)^{C}$ for all
closed subgroups $C\leq\G$.  The level fibrations are the maps that are
equivariant Serre fibrations $X(V)\to Y(V)$ for all $V$, or in
other words, the maps that are non-equivariant Serre fibrations
$X(V)^{C}\to Y(V)^{C}$ for all closed subgroups $C\leq\G$.  In
particular, every object is fibrant.  The
cofibrations are defined by the left lifting property, and are
precisely the retracts of relative cell complexes (defined using
sequential colimits, e.g.~\cite[5.4]{MMSS}) built out of the
$V$-desuspension $\G$-spectra of standard $\G$-space $n$-cells 
\[
F_{V}(\G/C \times S^{n-1})_{+}\to 
F_{V}(\G/C \times D^{n})_{+}
\]
for any orthogonal $\G$-representation $V$ and any $n\geq 0$, where
$D^{n}$ denotes the unit disk in $\bR^{n}$, $S^{n-1}$ its boundary,
and $C\leq\G$ a closed subgroup.
These cofibrations are also the cofibrations in the stable model
structure described next.

In the stable model structure, we define the weak equivalences in
terms of homotopy groups.  In fact, we describe
two different versions of homotopy groups, both of which define the
same weak equivalences. 
The homotopy groups of an orthogonal $\G$-spectrum are the homotopy
groups of the underlying $\G$-(pre-)spectrum, which can be defined
concretely as follows.  
Because of our emphasis on cyclotomic spectra and $TC$, we will work
in terms of the specific complete ``$\G$-universe'' (countable
dimensional equivariant real inner product space) usually used in this
context. (The specifics do not play a significant role here; rather,
we follow the notation and exposition of \cite[\S4]{BM4} as
closely as possible.)
Let
\[
U=\bigoplus_{n=0}^{\infty}\bigoplus_{r=1}^{\infty}\bC(n),
\]
where $\bC(0)$ denotes the complex numbers with trivial $\G$-action,
$\bC(1)$ denotes the complex numbers with the standard $\G$-action,
and $\bC(n)$ denotes the complex numbers with $\G$ acting through the
$n$th power map, all regarded as real vector spaces.
Here the inner product is induced by the standard ($\G$-invariant)
hermitian product on $\bC(n)$ and orthogonal direct sum.
Every orthogonal $\G$-representation is then isometric to a finite
dimensional $\G$-stable subspace of $U$.  Notationally, we write
$V<U$ to denote a finite dimensional $\G$-stable subspace of
$U$
and for $V<W<U$, we denote by $W-V$ the orthogonal complement
of $V$ in $W$.

\begin{defn}\label{defn:pi}
For an orthogonal $\G$-spectrum $X$, we define the \term{homotopy groups} by
\[
\pi_{q}^{C}X = 
\begin{cases}
\quad\displaystyle 
\lcolim_{V< U} \pi_{q}((\Omega^{V}X(V))^{C})&q\geq 0\\[1em]
\quad\displaystyle 
\lcolim_{\bC(0)^{-q}\leq V< U} \pi_{-q}((\Omega^{V-\bC(0)^{-q}}X(V))^{C})&q< 0\\
\end{cases}
\]
for $q\in \bZ$ and $C$ a closed subgroup of $\G$ \cite[III.3.2]{MM}.
\end{defn}

The expression above for the homotopy groups $\pi_{*}^{C}$ provides an
intrinsic construction of the homotopy groups of the
underlying non-equivariant spectrum of the right derived categorical
fixed point functor.  Analogously, we can construct the homotopy
groups of the underlying non-equivariant spectrum of the left derived
geometric fixed point functor.  We call these the ``geometric homotopy
groups'' and use the notation $\pi^{\Phi C}_{*}$ to emphasize
the analogy with $\pi_{*}^{C}$.  The geometric homotopy groups were
denoted as  $\rho^{H}_{q}$ in \cite[\S V.4]{MM}.

\begin{defn}\label{defn:rho}
For an orthogonal $\G$-spectrum $X$, we define the \term{geometric homotopy
groups} by
\[
\pi_{q}^{\Phi C}X = 
\begin{cases}
\quad\displaystyle 
\lcolim_{V< U} \pi_{q}(\Omega^{V^{C}}({X}(V)^{C}))&q\geq 0\\[1em]
\quad\displaystyle 
\lcolim_{\bC(0)^{-q}\leq V< U} \pi_{-q}(\Omega^{V^{C}-\bC(0)^{-q}}({X}(V)^{C}))&q< 0\\
\end{cases}
\]
for $q\in \bZ$, and $C$ a closed subgroup of
$\G$ \cite[V.4.8.(iii),V.4.12]{MM}.
\end{defn}

It is clear from the formula that $\pi_{q}^{\Phi C}(-)$ sends level
equivalences of orthogonal $\G$-spectra to isomorphisms of abelian
groups.  Since by \cite[V.4.12]{MM}, $\pi_{*}^{\Phi C}X\iso
\pi_{*}(\Phi^{C}X)$ when $X$ is level cofibrant, it follows that for any
$X$, $\pi_{*}^{\Phi C}X$ calculates the homotopy groups of the
underlying non-equivariant spectrum of the left derived geometric
fixed point functor applied to $X$,
\[
\pi_{*}^{\Phi C}X\iso \pi_{*}(\bL\Phi^{C}X).
\]

Both the homotopy groups and the geometric homotopy groups detect the
weak equivalences on the stable model structure on
orthogonal $\G$-spectra:  We define a
\term{weak equivalence} of orthogonal $\G$-spectra (or \term{stable
equivalence} when necessary to distinguish from other notions of weak
equivalence) to be a map that induces an isomorphism on all homotopy
groups, or equivalently (by~\cite[\S XVI.6.4]{May-Alaska}
and~\cite[VI.5.1 or V.4.17]{MM}), a map that induces an isomorphism on all
geometric homotopy groups.  The cofibrations in the stable model
structure are the same as the cofibrations in the level model
structure, and we define the fibrations by the right lifting property. By
\cite[III.4.8]{MM}, a map $X\to Y$ is a fibration in the stable model
structure exactly when it is a level fibration such that
the diagram
\[
\xymatrix@-1pc{
X(V) \ar[r] \ar[d] & \Omega^{W} X(V \oplus W) \ar[d] \\
Y(V) \ar[r] & \Omega^{W} Y(V \oplus W)
}
\]
is a homotopy pullback for all orthogonal $\G$-representations $V$,
$W$.  In particular, an object is cofibrant if and only if it is the
retract of a cell complex, and an object is fibrant if and only if it
is an equivariant $\Omega$-spectrum.

\begin{thm}[Stable model structure {\cite[4.2]{MM}}]\label{thm:ossmod}
The category of orthogonal $\G$-spectra is a cofibrantly-generated
closed model category in which a map $X \to Y$ is 
\begin{itemize}
\item A weak equivalence if the induced map on homotopy groups
$\pi_{q}^{C}$ is an isomorphism for all $q \in \bZ$ and all closed subgroups $C \leq  \G$, or
equivalently, 
if the induced map on geometric homotopy groups
$\pi_{q}^{\Phi C}$ is an isomorphism for all $q \in \bZ$ and all closed subgroups $C \leq  \G$.
\item A cofibration if it is a retract of a relative cell complex, and 
\item A fibration if it satisfies the right lifting property with
respect to the acyclic cofibrations.
\end{itemize}
Moreover, the model structure is compatible with the enrichment of
orthogonal $\G$-spectra over orthogonal spectra, meaning that the
analogue of Quillen's axiom SM7 is satisfied.
\end{thm}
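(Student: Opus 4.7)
The theorem is essentially the main result of~\cite[\S III.4]{MM}, so the plan is to follow the Mandell--May strategy, which proceeds in two stages: first set up a level model structure, and then localize it to the stable one.

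First I would build the \emph{level} model structure by transfer from the model structure on based $\G$-spaces (or more precisely, on based $\G/C$-spaces for each closed $C<\G$). The generating cofibrations are the maps $F_V(\G/C\times S^{n-1})_+\to F_V(\G/C\times D^n)_+$ and the generating acyclic cofibrations are the analogous maps out of $F_V(\G/C\times D^n)_+$ into its mapping cylinder. Because the free functors $F_V$ are left adjoint to the $V$-th space evaluation, the small object argument applies, and the fibrations and weak equivalences are detected levelwise on fixed points. This establishes a cofibrantly generated model structure with the cofibrations described in the theorem.

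Next I would pass to the stable model structure by a Bousfield-style left localization, adjoining to the generating acyclic cofibrations the mapping cylinder replacements of the canonical maps $\lambda_{V,W}\colon F_{V\oplus W}S^W\to F_VS^0$ for all pairs $V,W$ of orthogonal $\G$-representations in $\aU$. The generating cofibrations remain the same (hence the class of cofibrations does too), so the real content is identifying the new fibrations and acyclic cofibrations. The characterization of fibrations as level fibrations satisfying the displayed homotopy pullback condition is a formal consequence of the choice of generating acyclic cofibrations together with the adjunction $(F_V,\mathrm{Ev}_V)$. A key technical input is that each $\lambda_{V,W}$ is a stable equivalence; this is verified by direct computation of homotopy groups (and is essentially the content of the stable equivalence between $F_V S^V$ and the sphere spectrum).

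The main obstacle, and the step requiring the most genuine work, is verifying Quillen's acyclicity axiom: that a map which is both a cofibration and a stable equivalence has the left lifting property against all stable fibrations. Equivalently, one must show that a stable equivalence between equivariant $\Omega$-spectra is a level equivalence. This proceeds by reducing to the fibrant case, using that fibrant objects are $\Omega$-spectra (so $\pi_q^C$ is computed in a fixed level) and then arguing that a map of $\Omega$-spectra inducing isomorphisms on all $\pi_q^C$ is levelwise a weak equivalence on $C$-fixed points. The equivalence between the $\pi_*^C$- and $\pi_*^{\Phi C}$-characterizations of stable equivalences is then a separate argument, handled by induction over closed subgroups using isotropy separation cofiber sequences, as in~\cite[\S XVI.6.4]{May-Alaska}; this is where the (non-obvious) interaction between categorical and geometric fixed points really enters.

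Finally, for the enrichment compatibility (SM7), I would verify the pushout-product axiom on generators: the pushout-product of a generating (acyclic) cofibration of orthogonal $\G$-spectra with a generating (acyclic) cofibration of non-equivariant orthogonal spectra is again a cofibration, and acyclic if either factor is. This reduces, via the formula for smash products of free objects $F_V A\sma F_W B\iso F_{V\oplus W}(A\sma B)$, to the analogous pushout-product statement for based $\G$-spaces smashed with non-equivariant based spaces, which is standard. The stable equivalence half uses that $\lambda_{V,W}$ is a stable equivalence together with the already-established characterization of stable fibrations.
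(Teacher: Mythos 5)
Your outline is a faithful reconstruction of the argument in the reference the paper cites; the paper itself states this theorem without proof, deferring to \cite[III.4.2]{MM} for the model structure and the SM7 enrichment and to \cite[\S XVI.6.4]{May-Alaska} for the equivalence of the $\pi_{*}^{C}$ and $\pi_{*}^{\Phi C}$ characterizations of stable equivalences.

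One small technical correction to the localization step: the set of generating acyclic cofibrations for the stable model structure is not obtained by adjoining the mapping-cylinder inclusions $k_{V,W}$ of $\lambda_{V,W}$ by themselves. Rather, following \cite{MMSS} and \cite{MM}, one takes $K = J \cup (k\mathbin{\square} I)$, where $J$ is the set of generating level acyclic cofibrations, $I$ is the set of generating cofibrations of based spaces, $k$ is the set of the $k_{V,W}$, and $\square$ is the pushout-product. It is precisely the pushout-product with $I$ that converts the lifting property against $K$ into the displayed homotopy pullback condition on $X(V)\to\Omega^{W}X(V\oplus W)$; lifting against the $k_{V,W}$ alone does not give the right class of fibrations. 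With this adjustment the rest of your outline --- transfer of the level structure, the key lemma that a stable equivalence between equivariant $\Omega$-spectra is a level equivalence, isotropy separation for the geometric-homotopy-group characterization, and verification of SM7 via the pushout-product on generators using $F_{V}A\sma F_{W}B\iso F_{V\oplus W}(A\sma B)$ --- matches the strategy of \cite{MM}.
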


For our purposes, we need model structures for some localized homotopy
categories.  We start with the homotopy categories local to a family.

\begin{defn}[{\cite[IV.6.1]{MM}}]\label{defn:localequiv}
Let $\aF$ be a family of subgroups of $\G$, i.e., a
collection of closed subgroups of $\G$ closed under taking closed
subgroups (and conjugation).  Let $X,Y$ be orthogonal $\G$-spectra.
An \term{$\aF$-local equivalence} (or \term{$\aF$-equivalence}) is a map
$X\to Y$ that induces an isomorphism on homotopy groups $\pi^{C}_{*}$
for all $C$ in $\aF$, or equivalently, on all
geometric homotopy groups $\pi^{\Phi C}_{*}$
for all $C$ in $\aF$.
\end{defn}

An \term{$\aF$-cofibration} is a map
built as a retract of a relative cell complex using cells 
\[
F_{V}(\G/C \times S^{n-1})_{+}\to 
F_{V}(\G/C \times D^{n})_{+},
\]
where we require $C\in \aF$.  The $\aF$-fibrations are then defined by
the right lifting property.  Explicitly, a map $X\to Y$ of orthogonal
$\G$-spectra is an $\aF$-fibration exactly when it is levelwise an
$\aF$-fibration of spaces (the maps $(X(V))^{C}\to (Y(V))^{C}$ are
non-equivariant Serre fibrations for each orthogonal
$\G$-representation $V$ and each $C\in
\aF$) such that the diagram
\[
\xymatrix@-1pc{
(X(V))^{C} \ar[r] \ar[d] & (\Omega^{W} X(V \oplus W))^{C} \ar[d] \\
(Y(V))^{C} \ar[r] & (\Omega^{W} Y(V \oplus W))^{C}
}
\]
is a homotopy pullback for all orthogonal $\G$-representations $V$,
$W$ and all $C \in \aF$. 

\begin{thm}[$\aF$-local model structure
{\cite[IV.6.5]{MM}}]\label{thm:osflocalmod} 
The category of orthogonal $\G$-spectra is a cofibrantly-generated
closed model category in which the weak equivalences, cofibrations, and
fibrations are the $\aF$-equivalences, $\aF$-cofibrations, and
$\aF$-fibrations, respectively. The model structure is compatible with
the enrichment of orthogonal $\G$-spectra over orthogonal spectra,
meaning that the analogue of Quillen's axiom SM7 is satisfied.
\end{thm}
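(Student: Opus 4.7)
The strategy is to apply the recognition principle for cofibrantly generated model categories, using family-restricted versions of the generators for the stable model structure of Theorem~\ref{thm:ossmod}. I would let $I_{\aF}$ be the set of maps $F_{V}(\G/C \times S^{n-1})_{+} \to F_{V}(\G/C \times D^{n})_{+}$ for $C \in \aF$, $V$ ranging over a skeleton of $\sJ_{\G}$, and $n \geq 0$; and let $J_{\aF}$ consist of the corresponding horn inclusions together with the family-restricted versions of the acyclic generators in \cite{MM} that detect the $\Omega$-spectrum structure at subgroups $C \in \aF$. The domains of both $I_{\aF}$ and $J_{\aF}$ are compact (they are built from finite $\G$-CW data), so the small object argument provides the two functorial factorizations.

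The key computational step is to identify the lifting classes. Using the adjunction between $F_{V}$ and evaluation at $V$ together with the standard fixed-point adjunction for $C < \G$, one checks that the right lifting property against $I_{\aF}$ is equivalent to each $(X(V))^{C} \to (Y(V))^{C}$ being a Serre trivial fibration for all $V$ and all $C \in \aF$, and that the right lifting property against $J_{\aF}$ is equivalent to the $\aF$-fibration condition stated before the theorem. The main obstacle is then showing that the $I_{\aF}$-injective maps coincide with the $\aF$-acyclic $\aF$-fibrations. The easy direction, that any $I_{\aF}$-injective map is both an $\aF$-fibration and an $\aF$-equivalence, is immediate from the characterization above by inspecting fixed-point homotopy groups at each $C \in \aF$. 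The converse requires a homotopy pullback argument using the $\Omega$-spectrum structure on $C$-fixed points encoded in $J_{\aF}$; this forces a map which is both an $\aF$-fibration and an $\aF$-equivalence to restrict to a Serre trivial fibration $(X(V))^{C} \to (Y(V))^{C}$ at each $V$ and each $C \in \aF$.

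Finally, SM7 reduces to showing that the pushout-product of any generator in $I_{\aF}$ with a generating cofibration of (non-equivariant) orthogonal spectra is an $\aF$-cofibration, together with the analogous statement for acyclic cofibrations. Both reduce to the SM7 axiom in the stable model structure (Theorem~\ref{thm:ossmod}) via the isomorphism $F_{V}((\G/C \times A)_{+}) \sma F_{V'}(B_{+}) \iso F_{V \oplus V'}((\G/C \times A \times B)_{+})$, which shows that the relevant pushout-products are built from cells of the form in $I_{\aF}$ and $J_{\aF}$.
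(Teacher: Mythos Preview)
The paper does not give a proof of this theorem; it is stated with the citation \cite[IV.6.5]{MM} and no argument appears in the paper itself. So there is nothing in the paper to compare your sketch against beyond the reference.

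Your outline is essentially the argument in \cite[IV.6]{MM}: restrict the generating sets of the stable model structure to cells with isotropy in $\aF$ and rerun the recognition theorem for cofibrantly generated model categories. One step you pass over is worth flagging: for the recognition principle you must also check that relative $J_{\aF}$-cell complexes are $\aF$-equivalences (condition (4) in the usual formulation, e.g.\ \cite[2.1.19]{Hovey-ModelCat}). This does not follow from the lifting characterizations you describe; in \cite{MM} it holds because the maps in $J_{\aF}$ are already level equivalences or stable equivalences (hence $\aF$-equivalences) and $\aF$-equivalences are preserved under pushout along $h$-cofibrations and sequential colimits of $h$-cofibrations. Without this acyclicity step the factorization into acyclic cofibration followed by fibration is not yet established. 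Otherwise your sketch is sound and in line with the cited argument.
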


For our corepresentability results, we use model structures based on
the finite complete or $p$-complete homotopy categories.  Letting
$M^{1}_{p}$ denote the mod-$p$ Moore space in dimension $1$, we define a
$p$-equivalence to be a map that becomes a weak equivalence after
smashing with $M^{1}_{p}$.  Likewise, we define a
$p$-$\aF$-equivalence to be a map that becomes an $\aF$-equivalence
after smashing with $M^{1}_{p}$.  (Note that since $M^{1}_{p}$ is a CW
complex, smash product with it preserves $\aF$-equivalences for any
family $\aF$.)  
We also have the more general notions of finite
equivalence and finite $\aF$-equivalence, which are the maps that are
$p$-equivalences and $p$-$\aF$-equivalences (resp.) for all $p$.  The
argument for \cite[IV.6.3]{MM}, applied directly starting with the
$\aF$-local model structure rather than the standard stable model
structure, proves the following theorem.

\begin{thm}[$\aF$-local finite complete model structure]%
\label{thm:ospflocalmod}
The category of orthogonal $\G$-spectra is a cofibrantly-generated
closed model category in which the weak equivalences are the finite
$\aF$-equivalences (resp., finite $p$-$\aF$-equivalences), and the
cofibrations are the $\aF$-cofibrations. The model structure is
compatible with the enrichment of orthogonal $\G$-spectra over
orthogonal spectra, meaning that the analogue of Quillen's axiom SM7
is satisfied with respect to the fibrations in the finite complete
(resp., $p$-complete) model category of orthogonal spectra.
\end{thm}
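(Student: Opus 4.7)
The plan is to follow the template of the argument for the $\aF$-local model structure in \cite[IV.6.3]{MM}, adapting it by Bousfield-localizing further at the mod-$p$ Moore spectrum (for the $p$-complete case) or at the wedge of Moore spectra over all primes (for the finite complete case). Since Bousfield localization does not alter the class of cofibrations, the cofibrations will remain the $\aF$-cofibrations, and the new weak equivalences will be precisely the maps that become $\aF$-equivalences after smashing with $M^1_p$ (resp.\ with each $M^1_p$), which is exactly the definition of $p$-$\aF$-equivalence (resp.\ finite $\aF$-equivalence).

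Concretely, I would take as generating cofibrations the usual $\aF$-cells
\[
F_V(\G/C\times S^{n-1})_+\to F_V(\G/C\times D^n)_+,\qquad C\in \aF,
\]
and retain the $\aF$-local generating acyclic cofibrations $J_\aF$ of Theorem~\ref{thm:osflocalmod}. To perform the localization, I would enlarge $J_\aF$ by adjoining pushout-products of the generating cofibrations with a cofibrant model of the map $S\to M^1_p$ (and, in the finite case, analogously for every prime). Each such pushout-product is an $\aF$-cofibration and a $p$-$\aF$-equivalence by construction, and the small object argument then produces the functorial factorizations since all domains in the category of orthogonal $\G$-spectra are small. The two-out-of-three and retract axioms for $p$-$\aF$-equivalences are immediate from the corresponding axioms for $\aF$-equivalences applied after smashing with $M^1_p$. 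The lifting axiom against the enlarged set of generating acyclic cofibrations characterizes the new fibrations as the $\aF$-fibrations whose relevant homotopy pullback squares witness $p$-completion.

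The main obstacle is the detection lemma: verifying that every $\aF$-cofibration that is also a $p$-$\aF$-equivalence (or finite $\aF$-equivalence) has the left lifting property against every new fibration. As in \cite[IV.6.3]{MM}, this is proved by a Bousfield--Smith cardinality argument, producing a regular cardinal $\kappa$ such that every such acyclic cofibration is a retract of a $\kappa$-filtered transfinite composite of pushouts of maps in the enlarged generating set. The interplay between the $\aF$-local structure and the $p$-local structure requires careful bookkeeping, but once the cardinality bound is established the argument proceeds formally as in \cite[IV.6.3]{MM}. Finally, the SM7 axiom reduces by cofibrant generation to pushout-products of generating cofibrations with generating acyclic cofibrations: the contributions from $J_\aF$ are handled by SM7 in Theorem~\ref{thm:osflocalmod}, while the contributions from the new generators follow from the fact that smashing with a cofibrant orthogonal spectrum preserves both $\aF$-equivalences and $p$-equivalences, so that the resulting pushout-products lie among the acyclic cofibrations of the new structure.
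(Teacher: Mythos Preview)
Your overall plan aligns with the paper, which gives no proof beyond the sentence ``The argument for \cite[IV.6.3]{MM} proves the following theorem''; that argument is indeed the Bousfield--Smith cardinality approach you outline in your third paragraph, and that part of your proposal is correct.

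There is, however, a concrete error in your second paragraph. The map $S\to M^{1}_{p}$ is \emph{not} a $p$-equivalence: after smashing with $M^{1}_{p}$ it becomes $M^{1}_{p}\to M^{1}_{p}\sma M^{1}_{p}$, and since multiplication by $p$ is null on $M^{1}_{p}$ the target has strictly larger homotopy (for odd $p$ it splits as $M^{1}_{p}\vee \Sigma M^{1}_{p}$). Thus the pushout-products you propose to adjoin to $J_{\aF}$ are not acyclic in the target model structure, and adjoining them would localize at the wrong class of maps (you would be forcing $p=0$ rather than $p$-completing). Fortunately no explicit enlargement of $J_{\aF}$ is needed: the argument of \cite[IV.6.3]{MM} takes as generating acyclic cofibrations a set of representatives for all $\aF$-cofibrations that are $p$-$\aF$-equivalences between $\kappa$-bounded objects, for a suitably chosen regular cardinal $\kappa$, and the cardinality argument shows this set detects the fibrations. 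Drop the explicit-generator step and run Bousfield--Smith directly; with that correction the rest of your outline, including the SM7 verification via pushout-products, goes through.
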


\begin{rem}
The model structures reviewed in this section all admit
explicit descriptions of the generating cofibrations and acyclic
cofibrations; for details, see the cited references where they are
constructed. 
\end{rem}

\section{The categories of cyclotomic and pre-cyclotomic spectra}\label{sec:def}

The work of the previous two sections provides the background we need
for the work in this section to define the categories of $p$-cyclotomic and 
cyclotomic spectra and for the work in the next
section to
construct model structures on these categories.  Here we
start with the easier category of $p$-cyclotomic spectra and then turn
to the more complicated category of cyclotomic spectra.

The definition of $p$-cyclotomic spectra requires both
extra structure on an orthogonal $\G$-spectrum and a homotopical
condition, which we break into separate pieces.

\begin{defn}[Pre-$p$-cyclotomic spectra]\label{defn:prepcyc}
A \term{pre-$p$-cyclotomic spectrum} $X$ is a pair $(X,\cyc)$
consisting of an orthogonal $\G$-spectrum $X$ together with a map of
orthogonal $\G$-spectra  
\[
\cyc \colon \rho_{p}^{*}\Phi^{C_{p}} X \to X,
\]
where $\rho_{p}$ is the $p$-th root isomorphism $\G\to \G/C_{p}$.
A morphism of pre-$p$-cyclotomic spectra $(X,t_X) \to (Y,t_Y)$
consists of a map of orthogonal $\G$-spectra $X\to Y$ such that the
diagram 
\[
\xymatrix@-0.5pc{%
\rho_{p}^{*}\Phi^{C_{p}}X\ar[r]^-{t_X}\ar[d]
&X\ar[d]\\
\rho_{p}^{*}\Phi^{C_{p}}Y\ar[r]^-{t_Y}
&Y
}
\]
commutes.
\end{defn}

To avoid unnecessary verbosity we will say simply ``cyclotomic maps'' rather than
``pre-$p$-cyclotomic maps'' when the context is clear.

Clearly the category of pre-$p$-cyclotomic spectra inherits an enrichment over
spaces, with the set of cyclotomic maps topologized using the
subspace topology from the space of maps of orthogonal $\G$-spectra.
In fact, the category of pre-$p$-cyclotomic spectra inherits an enrichment over
spectra.

\begin{prop}\label{prop:pcycenrich}
The category of pre-$p$-cyclotomic spectra inherits an enrichment over
orthogonal spectra from the enrichment on orthogonal $\G$-spectra and
the enrichment of the functor $\rho^{*}\Phi^{C_{p}}$.
\end{prop}

\begin{proof}
Using the canonical orthogonal spectrum enrichment on $\rho_{p}^{*}$
and the orthogonal spectrum enrichment on $\Phi^{C_{p}}$
from~\eqref{eq:enriched}, for orthogonal $\G$-spectra $X$ and $Y$ we
get the spectrum of cyclotomic maps 
\[
F_{Cyc}(X,Y) = \equalizer \relax[
\xymatrix@C-1pc{
F^{\G}(X,Y) \ar@<-.5ex>[r]\ar@<.5ex>[r]
&F^{\G}(\rho_{p}^{*}\Phi^{C_{p}}X,Y)
} ]
\]
formed as the equalizer of the map 
\[
F^{\G}(X,Y) \to 
F^{\G}(\rho_{p}^{*}\Phi^{C_{p}}X,Y)
\]
induced by the structure map for $X$ and the composite
\[
F^{\G}(X,Y) \to 
F^{\G}(\rho_{p}^{*}\Phi^{C_{p}}X,\rho_{p}^{*}\Phi^{C_{p}}Y)\to
F^{\G}(\rho_{p}^{*}\Phi^{C_{p}}X,Y)
\]
induced by~\eqref{eq:enriched} and the structure map for $Y$.  Because
equalizers are formed spacewise, the zeroth space of this mapping
spectrum is the space of cyclotomic maps from $X$ to $Y$.  Composition
in $F^{\G}$ induces composition on $F_{Cyc}$, which is compatible with
the composition of cyclotomic maps.  
\end{proof}

The category of pre-$p$-cyclotomic spectra is complete (has all
limits) but only has certain colimits; the natural map goes from the
colimit of the geometric fixed points to the geometric fixed points of
the colimit, and so we can typically only construct those colimits
where this map is an isomorphism.

\begin{prop}\label{prop:prepcyclimits}
The category of pre-$p$-cyclotomic spectra has all limits.  It has all
coproducts, pushouts along maps that are levelwise closed inclusions,
and sequential colimits of maps that are levelwise closed inclusions.
\end{prop}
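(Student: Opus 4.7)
The plan is to exploit the fact that $\rho_p^*$ is an equivalence of categories (so preserves both limits and colimits), while $\Phi^{C_p}$ preserves limits only via a canonical comparison map (which suffices for constructing limits) but actually preserves the specific colimits in the statement (which is what forces the colimit restrictions).

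\textbf{Limits.} Given a diagram $\{X_i\}$ of pre-$p$-cyclotomic spectra, first form the limit $L = \lim X_i$ in orthogonal $\G$-spectra. The structure map is built as the composite
\[
\rho_p^* \Phi^{C_p} L \to \rho_p^* \Phi^{C_p} \lim X_i \overto{\alpha} \rho_p^*\lim \Phi^{C_p} X_i \iso \lim \rho_p^* \Phi^{C_p} X_i \overto{\lim \cyc_i} \lim X_i = L,
\]
where $\alpha$ is the canonical comparison map for the functor $\Phi^{C_p}$ applied to a limit (which exists even though $\Phi^{C_p}$ does not in general preserve limits), and the second isomorphism uses that $\rho_p^*$ is an equivalence of categories. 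One then verifies in a routine way that this makes the projections $L \to X_i$ into morphisms of pre-$p$-cyclotomic spectra and that $L$ equipped with this structure has the correct universal property: a cone of pre-$p$-cyclotomic spectra over $\{X_i\}$ is the same data as a cone of orthogonal $\G$-spectra whose unique factorization through $L$ commutes with the cyclotomic structure, which is automatic from the construction of the structure map on $L$.

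\textbf{Colimits.} For each of the three types of colimits -- arbitrary coproducts, pushouts along levelwise closed inclusions, and sequential colimits of levelwise closed inclusions -- form the colimit $K = \colim X_i$ in orthogonal $\G$-spectra. The key input is the statement in Section~2 that the geometric fixed point functor preserves precisely the colimits preserved by the fixed point functor on based spaces, which includes wedges, pushouts along closed inclusions, and sequential colimits of closed inclusions. Combined with the fact that $\rho_p^*$ preserves all colimits, we obtain a natural isomorphism $\colim \rho_p^* \Phi^{C_p} X_i \iso \rho_p^* \Phi^{C_p} K$, and the structure map on $K$ is then defined as
\[
\rho_p^* \Phi^{C_p} K \iso \colim \rho_p^* \Phi^{C_p} X_i \overto{\colim \cyc_i} \colim X_i = K.
\]
The inclusions $X_i \to K$ are pre-$p$-cyclotomic maps, and the universal property follows as above: the isomorphism $\rho_p^* \Phi^{C_p} K \iso \colim \rho_p^* \Phi^{C_p} X_i$ is precisely what ensures that the unique factorization of an orthogonal $\G$-spectrum cocone through $K$ respects the cyclotomic structure whenever each map in the cocone does.

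\textbf{Main obstacle.} There is no serious obstacle; the only subtlety is recognizing which colimits the functor $\rho_p^* \Phi^{C_p}$ preserves (so that the structure map of the colimit is well-defined) versus which it only admits a comparison map for (useful for limits but not for constructing colimits). One must resist the temptation to assert general finite colimits: a general pushout or coequalizer need not be preserved by $\Phi^{C_p}$, which is why the colimit statements are restricted to diagrams of levelwise closed inclusions and to coproducts.
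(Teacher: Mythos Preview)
Your proposal is correct and matches the paper's own approach, which is given informally in the paragraph immediately preceding the proposition: limits are created in orthogonal $\G$-spectra using the canonical comparison map $\Phi^{C_p}(\lim X_i)\to \lim \Phi^{C_p}X_i$ for the structure map, and colimits are created exactly when the comparison map from the colimit of geometric fixed points to the geometric fixed points of the colimit is an isomorphism, which holds for the listed cases by the discussion after Definition~\ref{defn:geofix}. You have simply fleshed out the details the paper leaves implicit.
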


\begin{proof}
Limits are created in the category of orthogonal $\G$-spectra, using
the natural map from the geometric fixed points of a limit to the
limit of the geometric fixed points as the structure map.  The natural
map from the colimit of the geometric fixed points to the
geometric fixed points of the colimit is an isomorphism for all of the
colimits in the statement since the space-level fixed point functor
commutes with these colimits.
\end{proof}

For indexed limits and colimits, similar observations apply.

\begin{prop}\label{prop:pcycorth}
For a cofibrant non-equivariant orthogonal spectrum $A$, $(-)\sma A$
extends to an endofunctor on pre-$p$-cyclotomic spectra that provides
the tensor with $A$ in the orthogonal spectrum enrichment of
pre-$p$-cyclotomic spectra.  For an arbitrary non-equivariant
orthogonal spectrum $A$, $F(A,-)$ extends to an endofunctor on
pre-$p$-cyclotomic spectra that provides the cotensor with $A$ in the
orthogonal spectrum enrichment of pre-$p$-cyclotomic spectra.
\end{prop}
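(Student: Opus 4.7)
The plan is to construct the pre-$p$-cyclotomic structure map on $X\sma A$ and on $F(A,X)$ explicitly, and then verify that the resulting endofunctors give the tensor and cotensor in the orthogonal-spectrum enrichment $F_{Cyc}$ from Proposition~\ref{prop:pcycenrich}.

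For the tensor, I would view the non-equivariant orthogonal spectrum $A$ as a $\G$-spectrum with trivial action, so that $\rho_p^{*}A\iso A$ and $\rho_p^{*}$ commutes with smash products. For cofibrant $A$, Theorem~\ref{thm:phicommute} then supplies a canonical isomorphism $\rho_p^{*}\Phi^{C_p}(X\sma A)\iso \rho_p^{*}\Phi^{C_p}X\sma A$, and I would define the pre-$p$-cyclotomic structure on $X\sma A$ as the composite
\[
\rho_p^{*}\Phi^{C_p}(X\sma A)\iso \rho_p^{*}\Phi^{C_p}X\sma A \overto{\cyc\sma \id_{A}} X\sma A.
\]
Functoriality in $X$ and in pre-$p$-cyclotomic maps is immediate from naturality.

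For the cotensor with arbitrary $A$, I would use the underlying adjunction $(-)\sma A\dashv F(A,-)$ of orthogonal $\G$-spectra and define the pre-$p$-cyclotomic structure on $F(A,X)$ to be the map adjoint to the composite
\[
\rho_p^{*}\Phi^{C_p}F(A,X)\sma A \to \rho_p^{*}\Phi^{C_p}(F(A,X)\sma A) \to \rho_p^{*}\Phi^{C_p}X \overto{\cyc} X,
\]
where the first arrow is the canonical natural transformation $\Phi^{C_p}(-)\sma A\to \Phi^{C_p}((-)\sma A)$ of Theorem~\ref{thm:phicommute}, which is defined for all $A$ (only its \emph{isomorphism} property requires cofibrancy, and is not used here), the second is $\rho_p^{*}\Phi^{C_p}$ applied to the evaluation, and the third is the structure map $\cyc$ on $X$.

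Finally, to see that these constructions give the tensor and cotensor in the spectral enrichment, I would exploit the presentation of $F_{Cyc}(X,Y)$ as an equalizer built from $F^{\G}$. Since $F(A,-)$ is a right adjoint (on both orthogonal $\G$-spectra and non-equivariant orthogonal spectra), it preserves the relevant equalizers, and combined with the standard adjunction isomorphisms $F^{\G}(X\sma A,Y)\iso F(A,F^{\G}(X,Y))\iso F^{\G}(X,F(A,Y))$ for non-equivariant $A$, this yields the required natural isomorphisms
\[
F_{Cyc}(X\sma A,Y)\iso F(A,F_{Cyc}(X,Y))\iso F_{Cyc}(X,F(A,Y))
\]
provided one verifies that the two parallel arrows of each equalizer correspond under the $F^{\G}$-isomorphisms. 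This compatibility reduces to the fact that the structure maps on $X\sma A$ and on $F(A,Y)$ were defined precisely so as to be adjoint with respect to $(-)\sma A\dashv F(A,-)$. The main obstacle is thus a careful bookkeeping of the canonical natural transformations involved (strong monoidality of $\rho_p^{*}$, the map from Theorem~\ref{thm:phicommute}, and the spectral enrichment of $\Phi^{C_p}$ from~\eqref{eq:enriched}); however, no new homotopical input beyond the preceding sections is required.
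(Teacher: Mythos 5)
Your proposal matches the paper's proof essentially exactly: the structure map on $X\sma A$ is defined via the inverse of the isomorphism of Theorem~\ref{thm:phicommute} followed by $\cyc\sma\id_A$, the structure map on $F(A,X)$ is defined as the adjoint of the same canonical transformation (which, as you correctly note, exists for arbitrary $A$) composed with evaluation and $\cyc$, and the tensor/cotensor properties follow by comparing the equalizer presentations of $F_{Cyc}$. Your final paragraph is a slightly more explicit unpacking of what the paper calls ``an easy comparison of equalizers,'' but the ideas and the route are the same.
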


\begin{proof}
For $X\sma A$, the structure map is induced by the structure map on
$X$ and the map 
\[
\rho_{p}^{*}\Phi^{C_{p}} (X\sma A)\to \rho_{p}^{*}\Phi^{C_{p}}X\sma A
\]
in Theorem~\ref{thm:phicommute}.  For $F(A,X)$, the structure map is
induced by the structure map on $X$ and the map
adjoint to the map
\[
\rho_{p}^{*}\Phi^{C_{p}}(F(A,X))\sma A
\to \rho_{p}^{*}\Phi^{C_{p}}(F(A,X)\sma A)
\to \rho_{p}^{*}\Phi^{C_{p}}X.
\]
An easy comparison of equalizers shows that $X\sma A$ is the tensor
and $F(A,X)$ is the cotensor of $X$ with $A$.
\end{proof}

The previous proposition in particular shows that the category of
pre-$p$-cyclotomic spectra has cotensors by all spaces and tensors by
cofibrant spaces.  In fact, the category of pre-$p$-cyclotomic has tensors
by all spaces since the smash product with spaces commutes with
geometric fixed points.

We define a $p$-cyclotomic spectrum to be a pre-$p$-cyclotomic
spectrum that satisfies the homotopical condition that the structure
map induces an $\aF_{p}$-equivalence in the equivariant stable
category 
\[
\rho_{p}^{*}\dPhi{C_{p}} X \to X,
\]
where $\dPhi{C_{p}}$ denotes the left derived functor of
$\Phi^{C_{p}}$ (see~\cite[V.4.5]{MM}), and $\aF_{p}$ denotes the family of $p$-groups,
$\aF_{p}=\{C_{p^{n}}\}$.   Since the geometric homotopy groups of
$\rho_{p}^{*}\dPhi{C_{p}}X$ are canonically isomorphic to the geometric homotopy groups of $X$~\cite[V.4.12]{MM}
\[
\pi^{\Phi C_{m}}_{*}(\rho_{p}^{*}\dPhi{C_{p}}X) \iso
\pi^{\Phi C_{mp}}_{*}(X),
\]
we can write this condition concisely as follows.

\begin{defn}[$p$-cyclotomic spectra]\label{defn:pcyc}
The category of \term{$p$-cyclotomic spectra} is the full subcategory of the
category of pre-$p$-cyclotomic spectra consisting of those objects $X$
for which the map $\pi^{\Phi C_{p^{n+1}}}_{q}(X)\to \pi^{\Phi
C_{p^{n}}}_{q}(X)$ induced by 
\[
\cyc(V) \colon \rho_{p}^{*}(X(V))^{C_{p}}\to X(V)
\] 
is an isomorphism for all $n\geq 0$, $q\in \bZ$.
\end{defn}

As a full subcategory, the category of $p$-cyclotomic spectra inherits
mapping spectra, and has the limits and colimits whose objects remain
in the category.  More specifically:

\begin{prop}\label{prop:pcyclimits}
The category of $p$-cyclotomic spectra has finite products and
pullbacks over fibrations.  It has all coproducts, pushouts over maps
that are levelwise closed inclusions, and sequential colimits over
levelwise closed inclusions.
\end{prop}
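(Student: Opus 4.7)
The plan is to leverage Proposition~\ref{prop:prepcyclimits}, which constructs the corresponding (co)limits in the larger category of pre-$p$-cyclotomic spectra, and to verify in each case that when the input objects are $p$-cyclotomic the resulting (co)limit still satisfies the homotopical condition of Definition~\ref{defn:pcyc}. Since $p$-cyclotomic spectra form a full subcategory of pre-$p$-cyclotomic spectra, this suffices.

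For coproducts, I would first use that the point-set geometric fixed point functor preserves wedges and that $\pi^{\Phi C}_{*}$, as a stable invariant, sends a wedge to the direct sum. The structure map on the wedge then decomposes as the wedge of the component structure maps, so the induced map on $\pi^{\Phi C_{p^{n+1}}}_{*}\to \pi^{\Phi C_{p^{n}}}_{*}$ is the direct sum of the individual maps and hence an isomorphism. The same reasoning applies to finite products, since in the stable setting finite products and finite coproducts coincide up to canonical weak equivalence. For sequential colimits along levelwise closed inclusions, the point-set geometric fixed point functor preserves such colimits (as noted in the review of geometric fixed points), and $\pi^{\Phi C}_{*}$ commutes with sequential colimits of spaces along closed inclusions; hence the condition again passes to the colimit.

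For pullbacks over fibrations and pushouts along levelwise closed inclusions, I would argue via long exact sequences in geometric homotopy groups and the five lemma. A pullback over an $\aF_{p}$-fibration is a homotopy pullback, and a pushout along a levelwise closed inclusion is a homotopy pushout (since levelwise closed inclusions behave as $h$-cofibrations in the relevant sense). The derived geometric fixed point functor is exact on the triangulated stable category, so applying it produces a fiber or cofiber sequence, and $\pi^{\Phi C}_{*}$ then yields a long exact sequence. The structure maps on the four corners give a ladder of such long exact sequences, and the hypothesis that the three inputs are $p$-cyclotomic supplies isomorphisms at three out of every five consecutive terms; the five lemma then yields the isomorphism for the pullback or pushout.

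The main subtlety I anticipate is the interaction between the point-set cyclotomic structure map used in Proposition~\ref{prop:prepcyclimits} to define the structure map on the (co)limit and the derived geometric fixed point functor $\dPhi{C_{p}}$ appearing in Definition~\ref{defn:pcyc}. For the colimit cases this compatibility is essentially immediate because $\Phi^{C_{p}}$ commutes with the relevant colimits on the point-set level, and one can arrange the input objects to be cofibrant so that the derived and point-set functors agree. For the limit cases one must additionally invoke that $\dPhi{C_{p}}$ preserves finite homotopy limits to identify the derived geometric fixed points of the (co)limit with the (co)limit of the derived geometric fixed points; this is the technical step I expect to require the most care.
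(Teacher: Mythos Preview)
The paper gives no proof beyond the sentence preceding the proposition; your plan---form each (co)limit in pre-$p$-cyclotomic spectra via Proposition~\ref{prop:prepcyclimits} and check Definition~\ref{defn:pcyc}---is precisely what is intended, and your arguments for coproducts, finite products, and sequential colimits are sound.

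You have, however, misidentified the subtlety. The concern about reconciling the point-set structure map with $\dPhi{C_{p}}$ is a non-issue: Definition~\ref{defn:pcyc} is phrased directly in terms of the geometric homotopy groups $\pi^{\Phi C}_{*}$ of Definition~\ref{defn:rho}, which are defined on every orthogonal $\G$-spectrum without any cofibrant replacement (the derived functor appears only in the motivating paragraph before the definition). One simply checks whether the natural map $\pi^{\Phi C_{p^{n+1}}}_{*}\to\pi^{\Phi C_{p^{n}}}_{*}$ induced by $\cyc(V)$ is an isomorphism on the (co)limit; for this one needs only that $\pi^{\Phi C}_{*}$ takes finite products to products and fiber sequences over $\aF_{p}$-fibrations to long exact sequences, both immediate from the definition. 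No cofibrancy of inputs and no appeal to preservation of homotopy limits by $\dPhi{C_{p}}$ is required.

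The actual gap is in your parenthetical that levelwise closed inclusions ``behave as $h$-cofibrations in the relevant sense.'' They need not: closed inclusions of spaces are not $h$-cofibrations in general, and the long exact sequence of (geometric) homotopy groups for a cofiber $A\to X\to X/A$ requires $A\to X$ to be an $h$-cofibration (cf.\ \cite[III.3.5]{MM}), not merely a levelwise closed inclusion. So your five-lemma argument for pushouts is justified only under this stronger hypothesis. Since the cofibrations of Section~\ref{sec:model} are $h$-cofibrations, this does not affect the paper's applications, but as a proof of the proposition exactly as stated it leaves the pushout case incomplete.
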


\begin{proof}
The assertion for colimits follows from the fact that the fixed point
functor on spaces preserves the colimits in the statement.  We can
deduce the existence of finite products and pullbacks over fibrations
from the fact that these homotopy limits are naturally weakly
equivalent to homotopy colimits.
\end{proof}

Because the derived geometric fixed point functor commutes with
derived smash product with non-equivariant spectra \cite[V.4.7]{MM},
$p$-cyclotomic 
spectra are closed under tensor (smash product) with cofibrant
non-equivariant orthogonal spectra.  Likewise, $p$-cyclotomic spectra
are closed under cotensor (function spectrum construction) with
cofibrant non-equivariant orthogonal spectra whose underlying object
in the stable category is finite.  

We now turn to pre-cyclotomic and cyclotomic spectra.  We require
structure maps for all primes $p$, with some compatibility relations.

\begin{defn}[Pre-cyclotomic spectra]\label{defn:precycc}
A \term{pre-cyclotomic spectrum} $X$ consists of an orthogonal $\G$-spectrum
$X$ together with structure maps 
\[
\cyc_{n}\colon \rho^{*}_{n}\Phi^{C_{n}}X\to X
\]
for all $n\geq 1$ such that the following diagram commutes for all
$m,n \in \bN$.
\[
\xymatrix@C+1.5pc{%
\rho^{*}_{mn}\Phi^{C_{mn}}X\ar[r]^-{\cyc_{mn}}\ar[d]_-{c_{m,n}}
&X\\
\rho^{*}_{m}\Phi^{C_{m}}(\rho^{*}_{n}\Phi^{C_{n}}X)
  \ar[r]_-{\rho^{*}_{m}\Phi^{C_{m}}\cyc_{n}}
&\rho^{*}_{m}\Phi^{C_{m}}X\ar[u]_-{\cyc_{m}}
}
\]
A map of pre-cyclotomic spectra is a map of orthogonal $\G$-spectra
that commutes with the structure maps.
\end{defn}

\begin{rem}
Clearly the structure of a pre-cyclotomic spectrum is determined by
the maps $\cyc_{p}$ for $p$ prime.  Vigleik Angeltveit has verified
that the relation
\begin{equation}\label{eq:vigeq}
\cyc_{p}\circ (\rho^{*}_{p}\cyc_{q})\circ c_{p,q}=
\cyc_{q}\circ (\rho^{*}_{q}\cyc_{p})\circ c_{q,p}
\end{equation}
for all primes $p$,$q$
implies the relation in the previous definition for all $m$,$n$.
Specifically, for any orthogonal $\G$-spectrum $X$, there is a
commutative diagram (for primes $p,q,r$) 
\[
\xymatrix@C-3pc{
& \rho^*_p \Phi^p \rho^*_q \Phi^q \rho^*_r \Phi^r X &
& \rho^*_q \Phi^q \rho^*_p \Phi^p \rho^*_r \Phi^r X & \\
& & \rho^*_{pq} \Phi^{pq} \rho^*_r \Phi^r X \ar[ul] \ar[ur] & & \\
& \rho^*_p \Phi^p \rho^*_{qr} \Phi^{qr} X \ar[uu] \ar[dl] &
& \rho^*_q \Phi^q \rho^*_{pr} \Phi^{pr}
X \ar[uu] \ar[dr] & \\
\rho^*_p \Phi^p \rho^*_q \Phi^q \rho^*_r \Phi^r X & & \rho^*_{pqr} \Phi^{pqr} X \ar[ul] \ar[uu] \ar[ur] \ar[dl]
\ar[dd] \ar[dr] & & \rho^*_q \Phi^q \rho^*_r \Phi^r \rho^*_p \Phi^p X \\
& \rho^*_{pr} \Phi^{pr} \rho^*_q \Phi^q X \ar[ul] \ar[dd] &
& \rho^*_{qr} \Phi^{qr} \rho^*_p \Phi^{p}
X \ar[ur] \ar[dd] & \\
& & \rho^*_r \Phi^{r} \rho^*_{pq} \Phi^{pq} X \ar[dl] \ar[dr] & & \\
& \rho^*_r \Phi^{r} \rho^*_p \Phi^{p} \rho^*_q \Phi^{q} X &
& \rho^*_r \Phi^{r} \rho^*_q \Phi^{q} \rho^*_p \Phi^{p} X. & \\
}
\]
If $X$ is equipped with pre-cyclotomic structure maps, then this
diagram yields six different maps $\rho^*_{pqr} \Phi^{pqr} X \to X$.
The relation in equation~\eqref{eq:vigeq} now implies (after a little
diagram-chasing) that these six maps are equal.
\end{rem}

\begin{defn}[Cyclotomic spectra]\label{defn:cyc}
The category of \term{cyclotomic spectra} is the full subcategory of the
category of pre-cyclotomic spectra consisting of those objects for
which the map $\pi^{\Phi C_{mn}}_{q}(X)\to \pi^{\Phi C_{m}}_{q}(X)$
induced by 
\[
\cyc_{n}(V)\colon \rho_{n}^{*}(X(V))^{C_{n}}\to X(V)
\] 
is an isomorphism for all $m,n\geq 1$, $q\in \bZ$.
\end{defn}

Once again, we get a spectrum of cyclotomic maps of pre-cyclotomic (or
cyclotomic) spectra as the equalizer 
\[
F_{Cyc}(X,Y) = \equalizer \relax\left[
\xymatrix@C-1pc{
F^{\G}(X,Y) \ar@<-.5ex>[r]\ar@<.5ex>[r]
&\prod\limits_{n \geq 1} F^{\G}(\rho_{n}^{*}\Phi^{C_{n}}X,Y)
} \right]
\]
of the maps determined by the maps
\[
F^{\G}(X,Y) \to 
F^{\G}(\rho_{n}^{*}\Phi^{C_{n}}X,Y)
\]
induced by the structure map for $X$ and the composites
\[
F^{\G}(X,Y) \to 
F^{\G}(\rho_{n}^{*}\Phi^{C_{n}}X,\rho_{n}^{*}\Phi^{C_{n}}Y)\to
F^{\G}(\rho_{n}^{*}\Phi^{C_{n}}X,Y)
\]
induced by~\eqref{eq:enriched} and the structure map for $Y$.
Propositions analogous to the ones above hold for the categories of
pre-cyclotomic spectra and cyclotomic spectra.

\begin{example}\label{exa:sphere}
The $S^1$-equivariant sphere spectrum has a canonical structure as a
cyclotomic spectrum induced by the canonical isomorphisms
$\rho_{n}^{*} \Phi^{C_{n}} S \cong S$.
\end{example}

We close by comparing the definition of cyclotomic spectra here to
definitions in previous work.  As far as we know the only definition
of a point-set category of cyclotomic spectra entirely in the context
of orthogonal $\G$-spectra is \cite[4.2]{BM4}
(cf.~\cite[\S1.2]{HMAnnals}), where the definition and construction of
$TC$ is compared with older definitions in the context of Lewis-May
spectra, e.g.,~\cite{HM2}.  In \cite{BM4} the authors lacked
Proposition~\ref{prop:logical} and so wrote a spacewise definition.
An easy check of universal properties reveals that the definition here
coincides with the definition there.

\section{Model structures on cyclotomic spectra}\label{sec:model}

We now move on to the homotopy theory of $p$-cyclotomic spectra and
cyclotomic spectra, which we express in terms of model structures.
The model structures are inherited from the ambient categories of
pre-$p$-cyclotomic and pre-cyclotomic spectra, where they are
significantly easier to set up, using standard arguments for
categories of algebras over monads.

\begin{cons}\label{cons:cell}
For an orthogonal $\G$-spectrum $X$, let 
\begin{align*}
\pC X &= X \vee \rho^{*}_{p}\Phi^{C_{p}} X \vee
\rho^{*}_{p}\Phi^{C_{p}}(\rho^{*}_{p}\Phi^{C_{p}} X)\vee \dotsb \\
\bC X &= \bigvee_{n\geq 1} \rho^{*}_{n}\Phi^{C_{n}}X, \,\, n\in \bZ.
\end{align*}
\end{cons}

The functor $\pC$ is a monad on the category of orthogonal
$\G$-spectra, the free monad generated by the endofunctor
$\rho^{*}_{p}\Phi^{C_{p}}$.  Clearly, the category of
pre-$p$-cyclotomic spectra is precisely the category of $\pC$-algebras
in orthogonal $\G$-spectra.
Because of the apparent failure of the
canonical map  
\[
\rho^{*}_{mn}\Phi^{C_{mn}}X\to
\rho^{*}_{n}\Phi^{C_{n}}(\rho^{*}_{m}\Phi^{C_{m}}X) 
\]
of Proposition~\ref{prop:logical} 
to be an isomorphism, $\bC$ does not appear to be a monad on the
category of orthogonal $\G$-spectra; however, it is a monad on the
full subcategory of cofibrant orthogonal $\G$-spectra, on which the
map is an isomorphism.  The unit is the inclusion of $X$ as
$\rho^{*}_{C_{1}} \Phi^{C_{1}}X$.  The 
multiplication is induced by the inverse isomorphisms
\[
\rho^{*}_{n}\Phi^{C_{n}}(\rho^{*}_{m}\Phi^{C_{m}}X)\to
\rho^{*}_{mn}\Phi^{C_{mn}}X.
\]
Pre-cyclotomic spectra with cofibrant underlying orthogonal
$\G$-spectra are precisely the $\bC$-algebras in the category of
cofibrant orthogonal $\G$-spectra.  More generally, 
every pre-cyclotomic spectrum comes with a canonical natural map $\xi
\colon \bC X\to X$ and a cyclotomic map is precisely a map of orthogonal
$\G$-spectra $f\colon X\to Y$ that makes the diagram
\[
\xymatrix{%
\bC X \ar[d]_{\xi}\ar[r]^{\bC f}&\bC Y\ar[d]^{\xi}\\
X\ar[r]_{f} & Y
}
\]
commute.  We cannot say much more, except for the following
proposition that allows us to treat $\bC X$ like a free pre-cyclotomic
spectrum functor.

\begin{prop}\label{prop:freeadj}
Let $A$ be a cofibrant orthogonal $\G$-spectrum.  Then $\pC A$ is a
pre-$p$-cyclotomic spectrum and $\bC A$ is a pre-cyclotomic spectrum
with structure maps induced by the monad multiplication.  If $X$ is a
pre-$p$-cyclotomic or pre-cyclotomic spectrum, then maps of orthogonal
$\G$-spectra from $A$ to $X$ are in one-to-one correspondence with
cyclotomic maps $\pC A\to X$ or $\bC A \to X$, respectively.
\end{prop}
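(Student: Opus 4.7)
The plan is to build the structure maps on $\pC A$ and $\bC A$ directly from the wedge-summand descriptions, then establish the bijection as a standard free-algebra adjunction, using cofibrancy of $A$ at every point where an instance of the canonical map $c_{m,n}$ of Proposition~\ref{prop:logical} must be invertible.

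First I would construct the structure maps. Since $\Phi^{C_{r}}$ preserves the wedges and sequential colimits of cofibrations used in the definitions of $\pC A$ and $\bC A$, the functors $\rho^{*}_{r}\Phi^{C_{r}}$ distribute over these wedges. For $\pC A$, the structure map $\rho^{*}_{p}\Phi^{C_{p}}\pC A\to \pC A$ is the evident shift sending the $n$-th wedge summand $(\rho^{*}_{p}\Phi^{C_{p}})^{n+1}A$ to the $(n{+}1)$-st summand by the identity. For $\bC A$, the structure map $\cyc_{m}\colon \rho^{*}_{m}\Phi^{C_{m}}\bC A\to \bC A$ sends the $n$-th summand $\rho^{*}_{m}\Phi^{C_{m}}(\rho^{*}_{n}\Phi^{C_{n}}A)$ into the $(mn)$-th summand $\rho^{*}_{mn}\Phi^{C_{mn}}A$ via the inverse of $c_{m,n}$, which is an isomorphism because $\rho^{*}_{n}\Phi^{C_{n}}A$ is cofibrant (the functors $\rho^{*}_{n}\Phi^{C_{n}}$ preserve cofibrant objects, since on free cells $F_{V}(G/C\times D^{n})_{+}$ they produce cellular output). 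The associativity coherence of Proposition~\ref{prop:logical} then translates directly into the commuting hexagon of Definition~\ref{defn:precycc}, so $\bC A$ is a pre-cyclotomic spectrum; the analogous and simpler check handles $\pC A$.

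Next I would set up the adjunction. Given a cyclotomic map $g\colon \pC A\to X$ (or $\bC A\to X$), restrict to the summand indexed by $n=0$ (respectively $n=1$) to obtain a map of orthogonal $\G$-spectra $g|_{A}\colon A\to X$. Conversely, given $f\colon A\to X$, define $\tilde f\colon \bC A\to X$ by letting its restriction to the $n$-th wedge summand be the composite
\[
\rho^{*}_{n}\Phi^{C_{n}}A \xrightarrow{\ \rho^{*}_{n}\Phi^{C_{n}}f\ } \rho^{*}_{n}\Phi^{C_{n}}X \xrightarrow{\ \cyc^{X}_{n}\ } X,
\]
and define $\tilde f$ for $\pC A$ by the analogous iteration of $\cyc^{X}$. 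Restricting $\tilde f$ to the $n=1$ summand recovers $f$, giving one half of the bijection.

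The step requiring actual verification is that $\tilde f$ is cyclotomic, and this is where the pre-cyclotomic axiom on $X$ is used. On the $n$-th summand, the composite $\tilde f\circ\cyc_{m}^{\bC A}$ is $\cyc_{mn}^{X}\circ \rho^{*}_{mn}\Phi^{C_{mn}}f\circ c_{m,n}^{-1}$, while $\cyc_{m}^{X}\circ\rho^{*}_{m}\Phi^{C_{m}}\tilde f$ is $\cyc_{m}^{X}\circ \rho^{*}_{m}\Phi^{C_{m}}\cyc_{n}^{X}\circ \rho^{*}_{m}\Phi^{C_{m}}(\rho^{*}_{n}\Phi^{C_{n}}f)$. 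Using naturality of $c_{m,n}$ to move $f$ past it, the two agree iff $\cyc_{mn}^{X}=\cyc_{m}^{X}\circ\rho^{*}_{m}\Phi^{C_{m}}\cyc_{n}^{X}\circ c_{m,n}$, which is exactly the compatibility diagram of Definition~\ref{defn:precycc} for $X$. A similar check for $\pC$ uses only the iterated $\cyc^{X}_{p}$. Finally, that $g\mapsto g|_{A}$ and $f\mapsto\tilde f$ are mutually inverse follows inductively on the wedge index: if $g$ is cyclotomic, then $g$ restricted to the $n$-th summand factors as $\cyc_{n}^{X}\circ \rho^{*}_{n}\Phi^{C_{n}}(g|_{A})$ by repeated application of the cyclotomic-map condition on $g$.

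The main obstacle is bookkeeping: keeping the direction of $c_{m,n}$ straight and confirming that cofibrancy of $A$ propagates through $\rho^{*}_{n}\Phi^{C_{n}}$ at every step where an inverse is needed. Once those points are settled, the bijection is formal.
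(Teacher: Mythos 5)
The paper gives no explicit proof of this proposition; it follows from the preceding discussion identifying pre-$p$-cyclotomic spectra as $\pC$-algebras, $\bC$ as a monad on cofibrant objects, and the canonical map $\xi\colon \bC X\to X$ carried by every pre-cyclotomic spectrum (so $\tilde f=\xi\circ\bC f$ in your notation). Your proof fills in exactly these details at the level of wedge summands and is correct; it takes essentially the same approach, just made explicit.
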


The usual theory of model structures on algebra categories tells us to
define cells of pre-$p$-cyclotomic spectra and pre-cyclotomic spectra
using the free functor applied to cells in the model structure on the
underlying category, in this case, the $\aF_{p}$-local or
$\aF_{\fin}$-local model structure on orthogonal $\G$-spectra,
respectively (where $\aF_{p}$ is the family of $p$-subgroups and
$\aF_{\fin}$ is the family of finite subgroups of $\G$).
Specifically, for pre-$p$-cyclotomic spectra, the cells are 
\[
\pC F_{V}(\G/C \times S^{n-1})_{+}\to \pC F_{V}(\G/C \times D^{n})_{+}
\]
for $V$ an orthogonal $\G$-representation, $n\geq 0$, and $C<\G$ a
$p$-subgroup, and for pre-cyclotomic spectra, the cells are 
\[
\bC F_{V}(\G/C \times S^{n-1})_{+}\to \bC F_{V}(\G/C \times D^{n})_{+}
\]
for $V$ an orthogonal $\G$-representation, $n\geq 0$, and $C<\G$ a
finite subgroup.  The first model structure theorem is then the following.

\begin{thm}\label{thm:modelprecyc}
The category of pre-$p$-cyclotomic spectra has a cofibrantly-generated
model structure with 
\begin{itemize}
\item Weak equivalences the $\aF_{p}$-equivalences of the underlying orthogonal $\G$-spectra,
\item cofibrations the retracts of relative cell complexes built out
of the cells above, and
\item Fibrations the $\aF_{p}$-fibrations of the underlying orthogonal $\G$-spectra.
\end{itemize}
The category of pre-cyclotomic spectra has a cofibrantly-generated
model structure with 
\begin{itemize}
\item Weak equivalences the $\aF_{\fin}$-equivalences of the underlying orthogonal $\G$-spectra,
\item Cofibrations the retracts of relative cell complexes built out
of the cells above, and
\item Fibrations the $\aF_{\fin}$-fibrations of the underlying orthogonal $\G$-spectra.
\end{itemize}
Thus, pre-$p$-cyclotomic spectra and pre-cyclotomic spectra form
model* categories with the above model structures.
\end{thm}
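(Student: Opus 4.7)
The plan is to lift the $\aF_{p}$-local (resp.\ $\aF_{\fin}$-local) model structure of Theorem~\ref{thm:osflocalmod} along the free/forgetful adjunction $\pC\dashv U$ (resp.\ $\bC\dashv U$) using the standard Quillen transfer theorem for cofibrantly generated model structures on categories of algebras over a monad. By Proposition~\ref{prop:freeadj}, maps out of $\pC F_{V}(\G/C\times D^{n})_{+}$ in pre-$p$-cyclotomic spectra correspond bijectively to maps out of $F_{V}(\G/C\times D^{n})_{+}$ in orthogonal $\G$-spectra, so the free functor sends the generating (acyclic) cofibrations of the underlying $\aF_{p}$-local structure to the candidate generating (acyclic) cofibrations listed in the theorem. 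Declaring weak equivalences and fibrations to be those detected on underlying orthogonal $\G$-spectra then forces cofibrations to be retracts of the listed relative cell complexes, and the lifting axioms not involving acyclicity follow by the adjunction.

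Three technical conditions constitute the substance of the transfer: smallness of the sources of generators, an explicit analysis of pushouts of free extensions, and acyclicity of pushouts of generating trivial cofibrations. For smallness, since $\rho^{*}_{p}$ is an isomorphism of categories and $\Phi^{C_{p}}$ commutes with sequential colimits of levelwise closed inclusions (per the remarks following Definition~\ref{defn:geofix}), so does the endofunctor $\pC$, reducing smallness in pre-$p$-cyclotomic spectra to the known smallness of $F_{V}(\G/C\times S^{n-1})_{+}$. For the pushout analysis, a pushout $X\cup_{\pC A}\pC B$ along a cofibration $A\to B$ of cofibrant orthogonal $\G$-spectra admits, on underlying orthogonal $\G$-spectra, a sequential filtration whose successive layers are cell attachments of the form $(\rho^{*}_{p}\Phi^{C_{p}})^{k}(A)\to(\rho^{*}_{p}\Phi^{C_{p}})^{k}(B)$, using that $\rho^{*}_{p}\Phi^{C_{p}}$ preserves pushouts along cofibrations of cofibrant objects.

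The main obstacle is the acyclicity step. The crucial input is the geometric homotopy group shift $\pi_{q}^{\Phi C_{m}}(\rho^{*}_{p}\dPhi{C_{p}}X)\iso\pi_{q}^{\Phi C_{mp}}(X)$ cited just before Definition~\ref{defn:pcyc}. Since relative cell complexes stay within cofibrant orthogonal $\G$-spectra, point-set $\Phi^{C_{p}}$ computes $\dPhi{C_{p}}$ on the relevant objects (using Theorem~\ref{thm:phicommute}-style reasoning and pushout-preservation). Thus if $A\to B$ is an $\aF_{p}$-acyclic cofibration between cofibrant objects, then $\rho^{*}_{p}\Phi^{C_{p}}(A)\to\rho^{*}_{p}\Phi^{C_{p}}(B)$ is again an $\aF_{p}$-acyclic cofibration, since for each $n\geq 0$ its induced map on $\pi_{q}^{\Phi C_{p^{n}}}$ corresponds to the induced map on $\pi_{q}^{\Phi C_{p^{n+1}}}$ of $A\to B$, which is an isomorphism by hypothesis. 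Iterating through the filtration above, together with the closure of $\aF_{p}$-equivalences under sequential colimits of $\aF_{p}$-cofibrations, yields that $X\to X\cup_{\pC A}\pC B$ is an $\aF_{p}$-equivalence, completing the transfer.

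The pre-cyclotomic case follows the same blueprint with $\pC$ replaced by $\bC$ and $\aF_{p}$ by $\aF_{\fin}$; the geometric homotopy group shift for arbitrary orders is supplied by Proposition~\ref{prop:logical} together with the cited isomorphism. The one new wrinkle is that $\bC$ is not literally a monad on all orthogonal $\G$-spectra, but it is a monad on the full subcategory of cofibrant objects where all cell complexes live, and Proposition~\ref{prop:freeadj} still furnishes the adjunction needed to run the small-object argument. Finally, the model* conclusion is immediate from Proposition~\ref{prop:prepcyclimits}: pre-$p$-cyclotomic and pre-cyclotomic spectra are complete (so in particular admit finite products and pullbacks over fibrations), while finite coproducts and pushouts over cofibrations exist because cofibrations are by construction retracts of relative cell complexes and hence levelwise closed inclusions.
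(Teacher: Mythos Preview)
Your proposal is correct and follows essentially the same approach as the paper. The paper's proof is a one-line citation of the MMSS transfer machinery (the ``Cofibration Hypothesis'' of \cite[5.3]{MMSS}, declared obvious here), together with Proposition~\ref{prop:prepcyclimits} for the model* conclusion; you have unpacked that citation, explicitly verifying smallness, the pushout/colimit behavior, and the acyclicity step via the geometric homotopy group shift $\pi_{q}^{\Phi C_{p^{n}}}(\rho_{p}^{*}\Phi^{C_{p}}X)\cong\pi_{q}^{\Phi C_{p^{n+1}}}(X)$ on cofibrant objects---which is precisely the content that makes the cited hypothesis ``obvious in this context.''
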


\begin{proof}
As in \cite[5.13]{MMSS} (and~\cite[III\S8]{MM}), the model structure
statements follow from a ``Cofibration Hypothesis'' \cite[5.3]{MMSS}
about pushouts and sequential colimits.  Specifically, recall that a
map $X \to Y$ of orthogonal $\G$-spectra is an $h$-cofibration if it
satisfies the homotopy extension property.  The Cofibration Hypothesis
is satisfied for a collection of maps $\aI$ when the following two
conditions hold. 
\begin{enumerate}
\item Let $i \colon A \to B$ be a coproduct of maps in $\aI$.  In any
pushout 
\[
\xymatrix{
A \ar[r] \ar[d]^-i & X \ar[d]^-j \\
B \ar[r] & Y\\
}
\]
of pre-$p$-cyclotomic (or $p$-cyclotomic) spectra, the cobase change
$j$ is an $h$-cofibration of orthogonal $\G$-spectra.
\item The sequential colimit of a sequence of maps $f_i$ in
pre-$p$-cyclotomic (or $p$-cyclotomic) spectra that are
$h$-cofibrations of orthogonal $\G$-spectra is computed as the
sequential colimit in the category of orthogonal $\G$-spectra.
\end{enumerate}

In order to construct the model structures, it suffices to show that
the Cofibration Hypothesis holds for the candidate generating
cofibrations and acyclic cofibrations produced by applying $\bC_p$ and
$\bC$ to the generating cofibrations and acyclic cofibrations in the
$\aF_p$-local and $\aF_{\fin}$-local model structures on orthogonal
$\G$-spectra.  But this is clear from the fact that the geometric
fixed point functor preserves the colimits in question.

The last statement then follows from
Proposition~\ref{prop:prepcyclimits} and the corresponding proposition
for pre-cyclotomic spectra.
\end{proof}

Similarly, starting with the $\aF$-local $p$- and finite complete
model structures on orthogonal $\G$-spectra
(Theorem~\ref{thm:ospflocalmod}), we obtain the following ``complete''
model structures on pre-$p$-cyclotomic spectra and pre-cyclotomic
spectra.

\begin{thm}\label{thm:completemodelprecyc}
The category of pre-$p$-cyclotomic spectra has a cofibrantly-generated
model structure with 
\begin{itemize}
\item Weak equivalences the $p$-$\aF_{p}$-equivalences of the underlying orthogonal $\G$-spectra,
\item Cofibrations the retracts of relative cell complexes built out
of the cells above, and
\item Fibrations the fibrations of the underlying orthogonal
$\G$-spectra in the $\aF_{p}$-local $p$-complete model structure.
\end{itemize}
The category of pre-cyclotomic spectra has a cofibrantly-generated
model structure with 
\begin{itemize}
\item Weak equivalences the finite complete $\aF_{\fin}$-equivalences of the underlying orthogonal $\G$-spectra,
\item Cofibrations the retracts of relative cell complexes built out
of the cells above, and
\item Fibrations the fibrations of the underlying orthogonal
$\G$-spectra in the $\aF_{\fin}$-local finite complete model structure.
\end{itemize}
Thus, pre-$p$-cyclotomic spectra and pre-cyclotomic spectra form
model* categories with the above model structures.
\end{thm}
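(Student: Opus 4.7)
The plan is to follow the same strategy used for Theorem~\ref{thm:modelprecyc}, adapting it to the new choice of underlying model structure on orthogonal $\G$-spectra. Concretely, we start from Theorem~\ref{thm:ospflocalmod} (the $\aF_p$-local $p$-complete and $\aF_{\fin}$-local finite complete model structures) and lift it along the monadic adjunctions
\[
\pC \dashv U \qquad \text{and} \qquad \bC \dashv U
\]
where $U$ is the forgetful functor to orthogonal $\G$-spectra (with $\bC$ understood, per Proposition~\ref{prop:freeadj}, in the sense that maps out of $\bC A$ are freely determined on cofibrant $A$). The generating cofibrations are obtained by applying $\pC$ or $\bC$ to the generating cofibrations of the $\aF_p$- or $\aF_{\fin}$-local model structure, which are the same cells already used in Theorem~\ref{thm:modelprecyc}. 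The generating acyclic cofibrations are obtained in the same way from the (new, larger) sets of generating acyclic cofibrations for the $p$- and finite complete model structures of Theorem~\ref{thm:ospflocalmod}.

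The core of the argument is the Cofibration Hypothesis of \cite[5.3]{MMSS}: that a cell attachment along the generating (acyclic) cofibrations produces a levelwise closed inclusion and that sequential colimits of such inclusions are computed as in orthogonal $\G$-spectra. Since the cofibrations here coincide with those of Theorem~\ref{thm:modelprecyc}, and since the geometric fixed point functor $\rho^*_n\Phi^{C_n}$ preserves pushouts along and sequential colimits of levelwise closed inclusions (used in Construction~\ref{cons:cell} to build $\pC$ and $\bC$), the Cofibration Hypothesis carries over unchanged from the proof of Theorem~\ref{thm:modelprecyc}. This lets us invoke \cite[5.13]{MMSS} and conclude existence of the model structure once we identify the weak equivalences and fibrations as stated. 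The identification of fibrations is automatic by the right lifting property and the adjunction; the identification of weak equivalences reduces to checking that a map of free objects $\pC f$ (respectively $\bC f$) is a $p$-$\aF_p$-equivalence (respectively a finite $\aF_{\fin}$-equivalence) whenever $f$ is, which follows from the fact that $\rho^*_n\Phi^{C_n}$ preserves these classes of weak equivalences on cofibrant objects via its compatibility with smashing with the Moore space $M^1_p$ (Theorem~\ref{thm:phicommute}).

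The step I expect to require most care is verifying that smashing with $M^1_p$ commutes suitably with the free functors $\pC$ and $\bC$ at the level of weak equivalences, so that $p$- and finite completion interact correctly with the wedge-of-iterates construction; this hinges on Theorem~\ref{thm:phicommute} and on the fact that $\pi^{\Phi C_n}_*$ of a wedge is the direct sum. Granting this, the small object argument for the generating acyclic cofibrations goes through word-for-word as in Theorem~\ref{thm:modelprecyc}, and the characterization of fibrations as $\aF_p$-local $p$-complete (respectively $\aF_{\fin}$-local finite complete) fibrations of underlying orthogonal $\G$-spectra is immediate from adjunction.

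Finally, the model* conclusion is a formal consequence of Proposition~\ref{prop:prepcyclimits} together with its analogue for pre-cyclotomic spectra: these propositions provide finite products, coproducts, pushouts along cofibrations (which are levelwise closed inclusions by construction), and pullbacks along fibrations, independently of which localized model structure we equip the category with. Combining this with the just-established model structure yields the model* structure, completing the proof.
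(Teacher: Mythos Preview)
Your proposal is correct and matches the paper's approach: the paper does not give a separate proof for this theorem, but simply prefaces it with ``Similarly, starting with the $\aF$-local $p$- and finite complete model structure on orthogonal $\G$-spectra (Theorem~\ref{thm:ospflocalmod}), we obtain the following,'' deferring entirely to the argument for Theorem~\ref{thm:modelprecyc}. Your elaboration on the interaction of the Moore space with the free functors via Theorem~\ref{thm:phicommute} is more detail than the paper supplies, but it is a legitimate point and not a deviation in strategy.
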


Turning to $p$-cyclotomic and cyclotomic spectra, 
because $\aF$-equivalences are defined in terms of the geometric
homotopy groups (Definition~\ref{defn:localequiv}), we have the
following simpler description of weak equivalences in this context.

\begin{prop}\label{prop:underlie}
A cyclotomic map of $p$-cyclotomic spectra is an $\aF_{p}$-equivalence of the
underlying orthogonal $\G$-spectra if and only if it is a weak
equivalence of the underlying non-equivariant orthogonal spectra.  
A cyclotomic map of cyclotomic spectra is an $\aF_{\fin}$-equivalence of the
underlying orthogonal $\G$-spectra if and only if it is a weak
equivalence of the underlying non-equivariant orthogonal spectra.  
\end{prop}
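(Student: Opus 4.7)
The plan is to reduce the question to a simple induction using the defining isomorphisms of the cyclotomic structure on geometric homotopy groups. Recall that $\aF_p = \{C_{p^n}\}_{n \geq 0}$ contains the trivial subgroup $C_1 = C_{p^0}$, and that an $\aF_p$-equivalence can be detected via geometric homotopy groups $\pi^{\Phi C}_*$ for $C \in \aF_p$ (Definition~\ref{defn:localequiv}). The base observation is that, unravelling Definition~\ref{defn:rho} with $C$ the trivial subgroup, one has
\[
\pi^{\Phi \{e\}}_q X = \lcolim_{V<\aU} \pi_q(\Omega^{V} X(V)),
\]
which by cofinality of $\{\bR^n\}$ in $\aU$ coincides with the homotopy groups of the underlying non-equivariant orthogonal spectrum of $X$. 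Thus ``weak equivalence of underlying orthogonal spectra'' and ``isomorphism on $\pi^{\Phi C_1}_*$'' mean the same thing.

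For the $p$-cyclotomic case, the forward implication is immediate: $C_1 \in \aF_p$, so any $\aF_p$-equivalence is in particular an iso on $\pi^{\Phi C_1}_*$. For the reverse, suppose $f \colon X \to Y$ is a cyclotomic map of $p$-cyclotomic spectra that is a non-equivariant weak equivalence. By naturality of $\cyc$, for every $n \geq 0$ and $q \in \bZ$ the square
\[
\xymatrix{
\pi^{\Phi C_{p^{n+1}}}_{q} X \ar[r]^{\cyc_*} \ar[d]_{f_*}
  & \pi^{\Phi C_{p^{n}}}_{q} X \ar[d]^{f_*} \\
\pi^{\Phi C_{p^{n+1}}}_{q} Y \ar[r]_{\cyc_*}
  & \pi^{\Phi C_{p^{n}}}_{q} Y
}
\]
commutes. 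By Definition~\ref{defn:pcyc}, the horizontal maps are isomorphisms. Together with the hypothesis at $n = 0$, an induction on $n$ shows $f_*$ is an isomorphism on $\pi^{\Phi C_{p^n}}_q$ for every $n \geq 0$, so $f$ is an $\aF_p$-equivalence.

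The cyclotomic case is formally identical. Given a cyclotomic map $f \colon X \to Y$ of cyclotomic spectra that is a non-equivariant weak equivalence, the same naturality square (with $p^n$ replaced by arbitrary $n \geq 1$, using $\cyc_n$) has horizontal isomorphisms by Definition~\ref{defn:cyc} applied with $m = 1$. Since every finite cyclic subgroup appears as some $C_n$ and $\aF_{\fin} = \{C_n\}_{n \geq 1}$, $f$ is an $\aF_{\fin}$-equivalence.

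There is essentially no obstacle: the proof is a one-step diagram chase combined with the identification of $\pi^{\Phi\{e\}}_*$ with underlying non-equivariant homotopy groups. The only point requiring a moment of care is that the colimit defining $\pi^{\Phi\{e\}}_*$ is taken over $V < \aU$ rather than over trivial representations, but this is resolved by cofinality of the trivial representations in $\aU$ (both for the nonnegative and negative degree ranges).
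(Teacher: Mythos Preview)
Your argument is correct and is exactly what the paper has in mind; indeed, the paper does not write out a proof at all but simply states the proposition as an immediate consequence of Definition~\ref{defn:localequiv} (that $\aF$-equivalences are detected on geometric homotopy groups) together with the defining isomorphisms in Definitions~\ref{defn:pcyc} and~\ref{defn:cyc}.  Your write-up just makes this explicit.

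One small wording issue: the claim that the $\bR^{n}$ are cofinal among $\G$-stable $V<\aU$ is not literally true (a nontrivial subrepresentation is not contained in any trivial one).  The correct statement is that any exhausting chain of $\G$-stable $V<\aU$ computes the same colimit, and non-equivariantly each such $V$ is isometric to $\bR^{\dim V}$, so the colimit agrees with the homotopy groups of the underlying non-equivariant orthogonal spectrum.  This does not affect the rest of your argument.
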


\begin{proof}
We give the argument for cyclotomic spectra; the proof for
$p$-cyclotomic spectra is analogous.  Clearly, a map of cyclotomic
spectra which is an $\aF_{\fin}$-equivalence of the underlying orthogonal
$\G$-spectra is a weak equivalence of underlying non-equivariant
orthogonal spectra.  Conversely, suppose we are given a map $X \to Y$
of $p$-cyclotomic spectra which is a weak equivalence of underlying
non-equivariant orthogonal spectra.  In the diagram
\[
\xymatrix{
\rho_{n}^{*}\bL\Phi^{C_{n}}X\ar[r]\ar[d]&
\rho_{n}^{*}\Phi^{C_{n}}X\ar[r]^-t\ar[d]
&X\ar[d]\\
\rho_{n}^{*}\bL\Phi^{C_{n}}Y\ar[r]&
\rho_{n}^{*}\Phi^{C_{n}}Y\ar[r]^-t
&Y, }
\]
the composite horizontal maps and the right-hand vertical map are weak
equivalences of underlying non-equivariant spectra, and so we conclude
that so is the left-hand vertical map.  Therefore, $\pi_*^{\Phi C_n} X
\to \pi_*^{\Phi C_n} Y$ is an isomorphism.  Inductively, we conclude
that $X \to Y$ is an $\aF_{\fin}$-equivalence of $\G$-spectra.
\end{proof}

Theorem~\ref{thm:modelstar}, the main theorem on the homotopy theory
of $p$-cyclotomic spectra, is now an immediate consequence.

\begin{proof}[Proof of Theorem~\ref{thm:modelstar}]
Any subcategory of a model* category that is closed under weak
equivalences, finite products and coproducts, pushouts over
cofibrations, and pullbacks over fibrations is itself a model*
category with the inherited model structure.  Clearly, the
$p$-cyclotomic spectra regarded as a subcategory of the model*
category of pre-$p$-cyclotomic spectra (with the
$\aF_{p}$-equivalences) satisfies these conditions.
Proposition~\ref{prop:underlie} now yields the characterization of the
weak equivalences given in the statement of the theorem.
\end{proof}

Similarly, Proposition~\ref{prop:underlie} also implies the
corresponding theorem for cyclotomic spectra.

\begin{thm}\label{thm:modelstarnotp}
The category of cyclotomic spectra is a model* category with weak
equivalences the weak equivalences of the underlying non-equivariant
orthogonal spectra and with fibrations the $\aF_{\fin}$-fibrations of
the underlying orthogonal $\G$-spectra.
\end{thm}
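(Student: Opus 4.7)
The plan is to mirror the deduction of Theorem~\ref{thm:modelstar} by applying the subcategory principle to the model* category of pre-cyclotomic spectra from Theorem~\ref{thm:modelprecyc}. By definition the category of cyclotomic spectra is a full subcategory of the category of pre-cyclotomic spectra, so it suffices to verify closure under (i) weak equivalences, (ii) finite coproducts and products, (iii) pushouts over cofibrations, and (iv) pullbacks over fibrations. The description of the fibrations as $\aF_{\fin}$-fibrations of the underlying orthogonal $\G$-spectra will then be automatic from the ambient model structure, and the identification of the weak equivalences with underlying non-equivariant weak equivalences is precisely the content of the proposition immediately preceding the statement.

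Case (i) is a naturality argument: if $f \colon X\to Y$ is an $\aF_{\fin}$-equivalence of pre-cyclotomic spectra, it induces isomorphisms on $\pi^{\Phi C_n}_{*}$ for every $n\geq 1$, so the commuting squares for $\cyc_n^X$ and $\cyc_n^Y$ transfer the cyclotomic isomorphism condition from $X$ to $Y$. Case (ii) for coproducts uses that $\rho_n^{*}\Phi^{C_n}$ preserves wedges (as $\Phi^{C_n}$ preserves the colimits preserved by fixed points on spaces), so the structure map on a wedge is the wedge of structure maps, which induces a direct sum of isomorphisms on $\pi^{\Phi C_{mn}}_{*}$. Finite products reduce to case (iv) as pullbacks over the terminal fibration.

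The core of the argument is (iii) and (iv). A pushout over a cofibration of pre-cyclotomic spectra is computed as the pushout in orthogonal $\G$-spectra; since $\rho_n^{*}\Phi^{C_n}$ preserves pushouts over cofibrations, this is also the pushout at the level of $\rho_n^{*}\Phi^{C_n}$. The resulting cofiber sequences in the underlying orthogonal $\G$-spectra yield long exact sequences in $\pi^{\Phi C_{mn}}_{*}$ and $\pi^{\Phi C_m}_{*}$ that are compared by the structure maps, and the Five Lemma transfers the cyclotomic isomorphism condition from the three corners to the pushout. Case (iv) is formally dual: pullbacks over $\aF_{\fin}$-fibrations are homotopy pullbacks in orthogonal $\G$-spectra, so the derived geometric fixed points produce homotopy pullbacks and hence long exact sequences in $\pi^{\Phi C}_{*}$; again the Five Lemma completes the argument.

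The chief technical worry, which I expect to be the main obstacle, is coherence: in contrast to the $p$-cyclotomic case, one must verify the cyclotomic isomorphism condition at every pair $(m,n)$ of positive integers simultaneously, and respect the compatibility diagram of Definition~\ref{defn:precycc} on the resulting structure maps. The compatibility is automatic at the pre-cyclotomic level because limits and colimits are formed by universal property using the coherence isomorphisms $c_{m,n}$ of Proposition~\ref{prop:logical}, so the cyclotomic condition reduces to a separate check on $\pi^{\Phi C_{mn}}_{*}\to \pi^{\Phi C_m}_{*}$ for each pair $(m,n)$ independently --- precisely the input to the Five Lemma arguments above. With these four closure properties in hand, the subcategory principle yields the model* structure on cyclotomic spectra, and the advertised descriptions of weak equivalences and fibrations follow.
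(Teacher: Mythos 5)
Your proof takes essentially the same route as the paper: the paper states the theorem as an immediate consequence of the subcategory principle for model* categories applied to the model structure on pre-cyclotomic spectra from Theorem~\ref{thm:modelprecyc}, with the needed closure properties recorded (in the $p$-primary case, and "analogously" for the general case) in Proposition~\ref{prop:pcyclimits} and the preceding discussion. The one small wrinkle is your reduction of finite products to pullbacks over a "terminal fibration": $X\to *$ is not in general an $\aF_{\fin}$-fibration, so it is cleaner to argue directly that $\pi^{\Phi C}_{*}$ sends finite products to products (which holds since $\pi_q$ of spaces and filtered colimits both commute with finite products), making the cyclotomic condition on $X\times Y$ manifestly equivalent to that on the factors.
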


For the ``complete'' model structures, we should look at the closure
of the categories of $p$-cyclotomic spectra and cyclotomic spectra under
the weak equivalences in those model structures.  We refer to these as
``weak'' $p$-cyclotomic and cyclotomic spectra.

\begin{defn}\label{defn:weak}
The category of \term{weak $p$-cyclotomic spectra} is the full subcategory of the
category of pre-$p$-cyclotomic spectra consisting of those objects $X$
for which the map $\pi^{\Phi C_{p^{n+1}}}_{q}(X\sma M^{1}_{p})\to \pi^{\Phi
C_{p^{n}}}_{q}(X\sma M^{1}_{p})$ induced by 
\[
\cyc(V) \colon \rho_{p}^{*}(X(V)\sma M^{1}_{p})^{C_{p}}\to X(V)\sma M^{1}_{p}
\] 
is an isomorphism for all $n\geq 0$, $q\in \bZ$, where $M^{1}_{p}$ denotes the
mod-$p$ Moore space in dimension $1$.

The category of \term{weak cyclotomic spectra} is the full subcategory of the
category of pre-cyclotomic spectra consisting of those objects for
which the maps $\pi^{\Phi C_{mn}}_{q}(X\sma M^{1}_{p})\to \pi^{\Phi
C_{m}}_{q}(X\sma M^{1}_{p})$
are isomorphisms for all $m,n\geq 1$, $q\in \bZ$, and $p$ prime.
\end{defn}

Once again, the category of weak $p$-cyclotomic spectra and weak
cyclotomic spectra are suitable subcategories of model* categories.
Along with Proposition~\ref{prop:underlie}, this implies the following
result.

\begin{thm}\label{thm:weakmodel}
The category of weak $p$-cyclotomic spectra is a model* category with weak
equivalences the $p$-equivalences of the underlying non-equivariant
orthogonal spectra and with fibrations the $\aF_{p}$-local
$p$-complete fibrations of the underlying orthogonal $\G$-spectra.

The category of weak cyclotomic spectra is a model* category with weak
equivalences the finite equivalences of the underlying non-equivariant
orthogonal spectra and with fibrations the $\aF_{\fin}$-local
finite complete fibrations of the underlying orthogonal $\G$-spectra.
\end{thm}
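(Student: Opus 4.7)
The plan is to deduce this result from the complete model* structures on pre-$p$-cyclotomic and pre-cyclotomic spectra in Theorem~\ref{thm:completemodelprecyc}, by imitating the deduction of Theorems~\ref{thm:modelstar} and~\ref{thm:modelstarnotp} from Theorem~\ref{thm:modelprecyc}. As noted after Definition~\ref{defn:modelstar}, any full subcategory of a model* category closed under weak equivalences, finite products and coproducts, pushouts over cofibrations, and pullbacks over fibrations inherits a model* structure. It therefore suffices to show that the full subcategory of weak $p$-cyclotomic (respectively, weak cyclotomic) spectra is closed under these operations inside the ambient category of pre-$p$-cyclotomic (resp.\ pre-cyclotomic) spectra equipped with its complete model structure.

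First I would prove the analogue of the proposition preceding Theorem~\ref{thm:modelstarnotp}: a cyclotomic map $f\colon X\to Y$ between weak $p$-cyclotomic spectra is a $p$-$\aF_{p}$-equivalence of the underlying orthogonal $\G$-spectra if and only if it is a $p$-equivalence of the underlying non-equivariant orthogonal spectra, and similarly in the weak cyclotomic case with finite $\aF_{\fin}$-equivalences. One direction is trivial; for the other, one uses induction on $n$ and the defining isomorphisms $\pi^{\Phi C_{p^{n+1}}}_{q}(X\sma M^{1}_{p})\iso \pi^{\Phi C_{p^{n}}}_{q}(X\sma M^{1}_{p})$ (and likewise for $Y$) to reduce computation of $\pi^{\Phi C_{p^{n}}}_{q}(-\sma M^{1}_{p})$ all the way down to $\pi^{\Phi C_{1}}_{q}(-\sma M^{1}_{p}) = \pi_{q}(-\sma M^{1}_{p})$ on the underlying non-equivariant spectrum. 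Closure under the complete weak equivalences follows immediately from this characterization: a $p$-$\aF_{p}$-equivalence with a weak $p$-cyclotomic spectrum on one side induces isomorphisms on $\pi^{\Phi C_{p^{n}}}_{*}(-\sma M^{1}_{p})$ for every $n$, transporting the defining condition.

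With this identification in hand, the remaining closure properties reduce to standard arguments about derived geometric fixed points. Finite wedges and pushouts along cofibrations are formed at the underlying orthogonal $\G$-spectrum level (Proposition~\ref{prop:prepcyclimits}); since $\dPhi{C_{p^{n}}}$ commutes with homotopy pushouts of cofibrations and with $(-)\sma M^{1}_{p}$, such a pushout yields a Mayer--Vietoris long exact sequence in $\pi^{\Phi C_{p^{n}}}_{*}(-\sma M^{1}_{p})$; a five-lemma argument comparing the long exact sequences at levels $p^{n+1}$ and $p^{n}$ shows the pushout remains weak $p$-cyclotomic. Finite products and pullbacks over fibrations are handled dually, using that $\dPhi{C_{p^{n}}}$ preserves homotopy pullbacks of $\aF_{p}$-local $p$-complete fibrations. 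The weak cyclotomic case is parallel, worked out one prime at a time. The main technical point is this five-lemma verification; the subtlety is ensuring that the derived geometric fixed points interact correctly with the $p$-complete (or finite complete) localization defining the ambient model structure, but this is encoded in the compatibility of $\Phi^{C}$ with homotopy (co)limits from Section~2 together with Theorem~\ref{thm:ospflocalmod}.
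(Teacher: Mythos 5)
The paper states Theorem~\ref{thm:weakmodel} without an explicit proof, but your approach is the intended one: deduce the model* structure from the fact that weak $p$-cyclotomic (resp.\ weak cyclotomic) spectra form a subcategory of the complete model* category of Theorem~\ref{thm:completemodelprecyc} closed under weak equivalences, finite products and coproducts, pushouts over cofibrations, and pullbacks over fibrations, mirroring the deductions of Theorems~\ref{thm:modelstar} and~\ref{thm:modelstarnotp}. One small correction of exposition: closure under the complete weak equivalences does not ``follow from'' your characterization of weak equivalences; it is a direct two-out-of-three argument on the commutative square relating $\pi^{\Phi C_{p^{n+1}}}_{*}(-\sma M^{1}_{p})$ and $\pi^{\Phi C_{p^{n}}}_{*}(-\sma M^{1}_{p})$ for $X$ and $Y$, using only that the $p$-$\aF_{p}$-equivalence induces isomorphisms on these groups for every $n$, whereas the characterization (which requires both objects already to be weak $p$-cyclotomic) is needed for the separate purpose of identifying the inherited weak equivalences with the non-equivariant $p$-equivalences (resp.\ finite equivalences) appearing in the statement.
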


Quillen's SM7 axiomatizes the compatibility between the model
structure and the enrichment.  In our context of an enrichment over
orthogonal spectra, the statement is the following theorem.

\begin{thm}\label{thm:SM7}
Let $i\colon W\to X$ be a cofibration of pre-$p$-cyclotomic (resp.,
pre-cyclotomic) spectra and let $f\colon Y\to Z$ be a fibration of
pre-$p$-cyclotomic (resp., pre-cyclotomic) spectra in either of the
model structures above.  Then the map
\[
F_{Cyc}(X,Y) \to F_{Cyc}(X,Z)\times_{F_{Cyc}(W,Z)}F_{Cyc}(W,Y)
\]
is a fibration of orthogonal spectra, and is a weak equivalence if
either $i$ or $f$ is.
\end{thm}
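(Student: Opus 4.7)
The plan is to reduce SM7 for pre-$p$-cyclotomic spectra to SM7 for the underlying $\aF_{p}$-local model structure on orthogonal $\G$-spectra (Theorem~\ref{thm:osflocalmod}) by means of an enriched free--forgetful adjunction. I describe the argument in the pre-$p$-cyclotomic case; the pre-cyclotomic case and the complete variants proceed identically, using $\bC$ in place of $\pC$ and Theorem~\ref{thm:ospflocalmod} in place of Theorem~\ref{thm:osflocalmod}. By the standard closure of the pullback-corner conclusion under pushouts, transfinite composition, and retracts, I would reduce to the case where $i$ is a generating (acyclic) cofibration of the form $\pC c$, with $c$ a generating (acyclic) cofibration of orthogonal $\G$-spectra in the $\aF_{p}$-local model structure.

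The main step is to establish a natural isomorphism of orthogonal spectra
\[
F_{Cyc}(\pC A, X) \iso F^{\G}(A, X)
\]
for $A$ any cofibrant orthogonal $\G$-spectrum and $X$ any pre-$p$-cyclotomic spectrum. Using Proposition~\ref{prop:logical} and the preservation of wedges of cofibrants by $\rho_{p}^{*}\Phi^{C_{p}}$, I would decompose $\pC A \iso \bigvee_{n \geq 0}(\rho_{p}^{*}\Phi^{C_{p}})^{n} A$ and $\rho_{p}^{*}\Phi^{C_{p}}\pC A \iso \bigvee_{n \geq 1}(\rho_{p}^{*}\Phi^{C_{p}})^{n} A$, so that $F^{\G}(\pC A, X)$ and $F^{\G}(\rho_{p}^{*}\Phi^{C_{p}}\pC A, X)$ become products, indexed by $n \geq 0$ and $n \geq 1$ respectively, of the same factors $F^{\G}((\rho_{p}^{*}\Phi^{C_{p}})^{n} A, X)$. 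Under this identification, the two parallel maps in the equalizer defining $F_{Cyc}$ become (a) the projection dropping the zeroth factor, coming from the structure map of $\pC A$, and (b) the shift induced by $\rho_{p}^{*}\Phi^{C_{p}}$ followed by the structure map of $X$. The equalizer condition then forces $f_{n+1} = \cyc \circ \rho_{p}^{*}\Phi^{C_{p}}(f_{n})$ for all $n \geq 0$, and projection to the zeroth factor identifies the equalizer with $F^{\G}(A, X)$.

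With this identification in hand, for a generating cell $i = \pC c$ with $c \colon A \to B$ the pullback-corner map in question becomes
\[
F^{\G}(B, Y) \to F^{\G}(B, Z) \times_{F^{\G}(A, Z)} F^{\G}(A, Y),
\]
the pullback-corner map in orthogonal $\G$-spectra associated to the underlying $\aF_{p}$-cell $c$ and the underlying $\aF_{p}$-fibration $f \colon Y \to Z$. The desired fibration and acyclicity statements then follow from the SM7 axiom for the $\aF_{p}$-local model structure on orthogonal $\G$-spectra (Theorem~\ref{thm:osflocalmod}). The main obstacle is the spectrum-level adjunction of the preceding paragraph: carefully tracking the two parallel maps in the equalizer through the wedge decomposition and verifying that the equalizer condition really is a free recursion on the zeroth factor uses Proposition~\ref{prop:logical} and the cofibrancy of $A$ in an essential way; once this identification is in place, the rest of the argument is formal.
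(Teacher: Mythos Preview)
Your argument is correct, but it takes a different route from the paper's. The paper does not reduce to generating cells via the free--forgetful adjunction; instead, it uses the \emph{cotensor} adjunction of Proposition~\ref{prop:pcycorth}. By the usual two-variable adjunction argument, the hom-form of SM7 stated in the theorem is equivalent to the cotensor form: for every cofibration $j\colon A\to B$ of (non-equivariant) orthogonal spectra and every fibration $f\colon Y\to Z$ of pre-$p$-cyclotomic spectra, the map
\[
F(B,Y)\to F(B,Z)\times_{F(A,Z)}F(A,Y)
\]
is a fibration of pre-$p$-cyclotomic spectra, and a weak equivalence if either $j$ or $f$ is. Since fibrations and weak equivalences in pre-$p$-cyclotomic spectra are detected on underlying orthogonal $\G$-spectra, this is immediate from SM7 for the $\aF_{p}$-local model structure (Theorem~\ref{thm:osflocalmod}). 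No cell induction or explicit computation of $F_{Cyc}(\pC A,-)$ is needed.

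Your approach has the virtue of making explicit the spectral adjunction $F_{Cyc}(\pC A,X)\iso F^{\G}(A,X)$, which is a useful fact in its own right (and is essentially what underlies the proof of Theorem~\ref{thm:newFp} later). The cost is the cell reduction and the bookkeeping of the equalizer, and in the pre-cyclotomic case the analogous identification $F_{Cyc}(\bC A,X)\iso F^{\G}(A,X)$ requires tracking the full system of compatibility relations in Definition~\ref{defn:precycc} rather than just a single recursion. The paper's argument sidesteps all of this by exploiting that cotensors exist for \emph{all} orthogonal spectra (not just cofibrant ones) and that the fibration condition is created by the forgetful functor.
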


\begin{proof}
By the usual adjunction argument (using the adjunction of
Proposition~\ref{prop:pcycorth}), it is equivalent to show that for
every cofibration $j\colon A\to B$ of orthogonal spectra, the map
\[
F(A,Y) \to F(B,Z)\times_{F(A,Z)} F(B,Y)
\]
is a fibration of pre-$p$-cyclotomic (resp., pre-cyclotomic) spectra
and a weak equivalence whenever $j$ or $f$ is.  But since fibrations
of pre-$p$-cyclotomic (resp., pre-cyclotomic) spectra are just
$\aF_{p}$-fibrations (resp., $\aF_{\fin}$-fibrations) of the
underlying orthogonal $\G$-spectra, this is clear from the
corresponding fact in the category of orthogonal $\G$-spectra.
Note that in the $p$-complete (resp., finite complete) model structure,
we actually obtain a fibration in the $p$-complete (resp., finite
complete) model structure on orthogonal spectra.
\end{proof}

We proved SM7 using one of the usually equivalent adjoint
formulations.  The other adjoint formulation, called the
pushout-product axiom, is not equivalent in this context because not
all the relevant pushouts and tensors exist; however, the pushout-product axiom
does follow for those pushouts and
tensors that exist in the category.  Specifically, given a cofibration
of pre-$p$-cyclotomic (resp., pre-cyclotomic) spectra $j\colon X\to Y$ and a
cofibration of cofibrant
orthogonal spectra $i\colon A\to B$, the map  
\[
(Y\sma A)\cup_{(X\sma A)} (X\sma B)\to Y\sma B
\]
is a cofibration and a weak equivalence if either $i$ or $j$ is
cofibration.  An immediate consequence of this formula is the fact
that the model structures on pre-$p$-cyclotomic and pre-cyclotomic
spectra are stable in the sense that suspension (smash with $S^{1}$)
is an equivalence on the homotopy category with inverse equivalence
smash with $F_{\bR}S^{0}$.  As in~\cite[\S 7]{Hovey-ModelCat}, this
implies that the associated homotopy categories become triangulated
with the Quillen Puppe cofibration sequences defining the
distinguished triangles and the Quillen suspension defining the shift.  In
fact, Theorem~\ref{thm:SM7} directly gives the triangulated structure:
Mapping out of a cofibration sequence of cofibrant objects into a
fibrant object, we get a fibration sequence on the mapping spectra,
and mapping into a fibration sequence of fibrant objects from a
cofibrant object, we get a fibration sequence of mapping spectra.
Summarizing, we have the following proposition.

\begin{prop}
The homotopy categories of
pre-$p$-cyclotomic spectra and pre-cyclotomic spectra are
triangulated, with the distinguished triangles determined by the
cofiber sequences specified by the model structure
(see~\cite[6.2.6]{Hovey-ModelCat}) and suspension inducing the shift.
The homotopy categories of $p$-cyclotomic spectra 
and cyclotomic spectra are full triangulated subcategories of the
homotopy categories of pre-$p$-cyclotomic spectra and pre-cyclotomic
spectra, respectively.
\end{prop}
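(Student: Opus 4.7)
The plan is to derive the triangulated structure on the pre-cyclotomic homotopy categories from standard stable model category machinery, and then verify that the $p$-cyclotomic and cyclotomic subcategories are closed under shift and cones by a five-lemma argument on long exact sequences of geometric homotopy groups.

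For the pre-cyclotomic halves of the statement, I would invoke the paragraph preceding the proposition, which already observes that each of the pre-$p$-cyclotomic and pre-cyclotomic model structures is stable (smash with $S^{1}$ is an equivalence with inverse smash with $F_{\bR}S^{0}$), together with Theorem~\ref{thm:SM7}, which establishes SM7 with respect to the orthogonal spectrum enrichment. These are precisely the ingredients that the standard construction~\cite[Ch.~7]{Hovey-ModelCat} requires to produce a triangulated structure on the homotopy category of a stable model category, with distinguished triangles the cofiber sequences of cofibrant objects (equivalently, by stability, the fiber sequences of fibrant objects).

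For the subcategory claim, since these are full subcategories it suffices to show closure under shift and under cones. Closure under suspension is immediate from the fact that $\rho_{n}^{*}\dPhi{C_{n}}$ commutes with smash by non-equivariant orthogonal spectra (\cite[V.4.7]{MM}), so smashing with $S^{1}$ or $F_{\bR}S^{0}$ preserves the defining isomorphism condition on geometric homotopy groups. For cones, given a cyclotomic map $f\colon X\to Y$ of $p$-cyclotomic spectra, I would form the cone $Z$ in pre-$p$-cyclotomic spectra via Theorem~\ref{thm:modelprecyc}. Because pre-$p$-cyclotomic cofibrations are underlyingly $\aF_{p}$-cofibrations of orthogonal $\G$-spectra and $\Phi^{C_{p}}$ preserves pushouts along cofibrations and sequential cofibrations (and models $\dPhi{C_{p}}$ on cofibrant objects), applying $\rho_{p}^{*}\dPhi{C_{p}}$ to $X\to Y\to Z$ yields a cofiber sequence. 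The structure maps $\cyc$ then give a morphism of cofiber sequences, and taking the long exact sequences of $\pi^{\Phi C_{p^{n}}}$ together with the natural isomorphism $\pi^{\Phi C_{p^{n+1}}}_{*}(W)\iso \pi^{\Phi C_{p^{n}}}_{*}(\rho_{p}^{*}\dPhi{C_{p}}W)$ of \cite[V.4.13]{MM} reduces the $p$-cyclotomic condition for $Z$ to that of $X$ and $Y$ via the five-lemma. The cyclotomic case runs identically with all $n\geq 1$ in place of $p$.

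The main technical point requiring care is verifying that $\rho_{p}^{*}\dPhi{C_{p}}$ genuinely carries the pre-$p$-cyclotomic cofiber sequence to a cofiber sequence in a manner compatible with the structure maps $\cyc$. After cofibrant replacement this should reduce to the observation that a cofibration between cofibrant pre-$p$-cyclotomic spectra is, underlyingly, an $\aF_{p}$-cofibration between cofibrant orthogonal $\G$-spectra, so that applying $\Phi^{C_{p}}$ levelwise to the underlying cofiber diagram genuinely realizes $\dPhi{C_{p}}$ on each term; once this is in hand the rest is a formal chase.
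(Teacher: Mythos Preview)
Your proposal is correct and follows the same line as the paper, which states this proposition without a formal proof, treating it as an immediate consequence of the stability and SM7 discussion in the preceding paragraph together with the closure properties already recorded in Proposition~\ref{prop:pcyclimits}. Your explicit five-lemma argument on geometric homotopy groups is precisely the verification the paper leaves implicit; the only small caveat is that the appeal to \cite[Ch.~7]{Hovey-ModelCat} technically presumes a bicomplete model category rather than a model* category, but the relevant constructions (cofiber sequences, suspension invertibility, mapping spectra behavior from SM7) go through verbatim in the model* setting, which is how the paper phrases the argument.
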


To compute the derived mapping spectrum using the enrichment, we take
the orthogonal spectrum of maps from a cofibrant cyclotomic spectrum
$X$ to a fibrant cyclotomic spectrum $Y$.
As cofibrant cyclotomic spectra do not typically arise in nature, we
offer the following construction that will allow us to construct the
derived mapping spectra using the more general class of cyclotomic
spectra whose underlying orthogonal $\G$-spectra are cofibrant.

\begin{cons}\label{cons:newFp}
For pre-$p$-cyclotomic spectra $X,Y$, let $F^{h}_{Cyc}(X,Y)$ be the
homotopy equalizer of the pair of maps $x_{p},y_{p}\colon F(X,Y)\to
F(\rho^{*}_{p}\Phi^{C_{p}}X,Y)$, where $x_{p}$ is induced by the
structure map for $X$ 
and $y_{p}$ is the composite 
\[
F^{\G}(X,Y) \to 
F^{\G}(\rho_{p}^{*}\Phi^{C_{p}}X,\rho_{p}^{*}\Phi^{C_{p}}Y)\to
F^{\G}(\rho_{p}^{*}\Phi^{C_{p}}X,Y)
\]
induced by~\eqref{eq:enriched} and the structure map for $Y$. 
Specifically, we construct $F^{h}_{Cyc}(X,Y)$ as the pullback 
\[
\xymatrix{%
F^{h}_{Cyc}(X,Y)\ar[r]\ar[d]
&F^{\G}(\rho_{p}^{*}\Phi^{C_{p}}X,Y)^{I}\ar[d]\\
F^{\G}(X,Y)\ar[r]_-{(x_{p},y_{p})}
&F^{\G}(\rho_{p}^{*}\Phi^{C_{p}}X,Y) \times
F^{\G}(\rho_{p}^{*}\Phi^{C_{p}}X,Y)
}
\]
where $(-)^{I}=F(I_{+},-)$ is the orthogonal spectrum of unbased maps
out of the unit interval and the vertical map is induced by the restriction to $\{0,1\}\subset I$. 
\end{cons}

\begin{thm}\label{thm:newFp}
Let $X$ and $Y$ be pre-$p$-cyclotomic spectra.  If $X$ is cofibrant, then
the canonical map from the equalizer to
the homotopy equalizer $F_{Cyc}(X,Y)\to
F^{h}_{Cyc}(X,Y)$ is a level equivalence.
\end{thm}

\begin{proof}
First consider the case when $X=\pC A$ for $A$ a $\aF_{p}$-cofibrant
orthogonal $\G$-spectrum.  In this case we are looking at the canonical
map from the pullback with $J=*$ to the pullback with $J=I$ (induced
by $I\to *$) in the following diagram.
\[
\xymatrix{%
&\displaystyle 
\prod_{n\geq 1}F^{\G}(\rho_{p_{n}}^{*}\Phi^{C_{p^{n}}}A,Y)^{J}\ar[d]\\
\displaystyle 
\prod_{n\geq 0}F^{\G}(\rho_{p_{n}}^{*}\Phi^{C_{p^{n}}}A,Y)\ar[r]
&\displaystyle 
\prod_{n\geq 1}F^{\G}(\rho_{p_{n}}^{*}\Phi^{C_{p^{n}}}A,Y)
\times 
\prod_{n\geq 1}F^{\G}(\rho_{p_{n}}^{*}\Phi^{C_{p^{n}}}A,Y).
}
\]
This is the map 
\[
F^{\G}(A,Y)\to 
F^{\G}(A,Y)\times_{\prod_{n\geq 1}F^{\G}(\rho_{p_{n}}^{*}\Phi^{C_{p^{n}}}A,Y)}
(\prod_{n\geq 1}F^{\G}(\rho_{p_{n}}^{*}\Phi^{C_{p^{n}}}A,Y))^{I_{+}},
\]
from $F^{\G}(A,Y)$ to the mapping path object, which is always a level
equivalence of orthogonal spectra.  Looking at pushouts over
cofibrations and sequential colimits over cofibrations, it follows
from \cite[III.2.7]{MM} that the map is a level equivalence for cell
pre-$p$-cyclotomic spectra.  Finally, since level equivalences are
preserved by retracts, it follows that the map is a level equivalence for cofibrant pre-$p$-cyclotomic spectra.
\end{proof}

\begin{cor}\label{cor:newFp}
Let $X$ be a pre-$p$-cyclotomic spectrum whose underlying orthogonal
$\G$-spectrum is $\aF_{p}$-cofibrant, and let $\overline X\to X$ be a
cofibrant replacement in the model* category of pre-$p$-cyclotomic
spectra.  Then for any fibrant pre-$p$-cyclotomic spectrum $Y$, the
maps
\[
F^{h}_{Cyc}(X,Y)\to F^{h}_{Cyc}(\overline X,Y)\from F_{Cyc}(\overline X,Y)
\]
are weak equivalences; thus, when $Y$ is fibrant, $F^{h}_{Cyc}(X,Y)$
represents the derived mapping spectrum.
\end{cor}

Note that if $X$ is just cofibrant as an orthogonal $\G$-spectrum,
$X\sma E\aF_{p+}$ is $\aF_{p}$-cofibrant; in practice, one expects to
apply the previous corollary to $X\sma E\aF_{p+}$ (which inherits a
canonical pre-$p$-cyclotomic structure from a pre-$p$-cyclotomic
structure on $X$).

We have an analogous construction in the context of pre-cyclotomic
spectra.  Because the definition of pre-cyclotomic spectra involves
compatibility relations, we need a homotopy limit over a more
complicated category.  As above, but for each $n>1$, we have maps 
\[
x_{n},y_{n}\colon F(X,Y)\to F(\rho^{*}_{n}\Phi^{C_{n}}X,Y)
\]
induced by the structure map on $X$ and the structure map on $Y$,
respectively.  More generally, we write $x_{n;m},y_{n;m}$ for the maps
\[
F(\rho^{*}_{m}\Phi^{C_{m}}X,Y)\to 
F(\rho^{*}_{mn}\Phi^{C_{mn}}X,Y)
\]
induced by the map $\rho^{*}_{m}\Phi^{C_{m}}\cyc_{n}$ on X,
\[
\rho^{*}_{mn}\Phi^{C_{mn}}X\to  
\rho^{*}_{m}\Phi^{C_{m}}(\rho^{*}_{n}\Phi^{C_{n}}X)
\overto{\rho^{*}_{m}\Phi^{C_{m}}\cyc_{n}} X,
\]
and
\[
F^{\G}(\rho^{*}_{m}\Phi^{C_{m}}X,Y) \to
F^{\G}(\rho_{n}^{*}\Phi^{C_{n}}(\rho^{*}_{m}\Phi^{C_{m}}X),\rho_{n}^{*}\Phi^{C_{n}}Y)\to
F^{\G}(\rho_{mn}^{*}\Phi^{C_{mn}}X,Y)
\]
induced by the map $\cyc_{n}$ on $Y$.  These maps satisfy the
following relations.
\begin{equation}\label{eq:xy}
x_{n;\ell m}\circ x_{m;\ell}=x_{mn;\ell}
\qquad
x_{n;\ell m}\circ y_{m;\ell}=y_{m;\ell n}\circ x_{n;\ell}
\qquad
y_{n;\ell m}\circ y_{m;\ell}=y_{mn;\ell}
\end{equation}
The first and last equations follows from the compatibility
requirement relating $\cyc_{mn}$ and $\cyc_{n}\circ \cyc_{m}$ (for $X$
and $Y$, respectively).  The middle equation follows from the
functoriality of $\rho^{*}_{m}\Phi^{C_{m}}$.

\begin{cons}\label{cons:newF}
Let $\Theta$ be the category whose objects are the positive integers
and maps freely generated by maps $x_{n;m},y_{n,m}\colon m\to mn$ for
all $m,n \in \bN$, subject to the relations~\eqref{eq:xy}.  Construct
$F^{h}_{Cyc}(X,Y)$ as the homotopy limit of the functor
$F^{\G}(\rho^{*}_{m}\Phi^{C_{m}}X,Y)$ from $\Theta$ to orthogonal
spectra. 
\end{cons}

The category $\Theta$ is (non-canonically) isomorphic to the opposite
of the category $\mathbb{I}$ used in the construction of $TC$
(see~\cite[3.1]{HM2} or Definition~\ref{defn:TC} below).

To explain the relationship of this construction with
Construction~\ref{cons:newFp}, let $\Theta_{p}$ be the full
subcategory of $\Theta$ consisting of the objects $p^{s}$ for $s\geq 0$.
The inclusion of the full subcategory consisting of $1$ and $p$
induces a map to the homotopy equalizer of $x_{p;1},y_{p;1}$ from
the homotopy limit over $\Theta_{p}$.

\begin{prop}\label{prop:Thetap}
Let $F$ be any functor from $\Theta_{p}$ to orthogonal spectra.  The
canonical map to the homotopy equalizer of $x_{p;1},y_{p;1}$ from
the homotopy limit over $\Theta_{p}$ is a level equivalence.
\end{prop}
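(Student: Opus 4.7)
The plan is to recognize this as an instance of the cofinality theorem for homotopy limits. I will show that the inclusion $i\colon J\hookrightarrow \Theta_{p}$, where $J$ is the parallel-arrow subcategory $\{1\rightrightarrows p\}$ generated by $x_{p;1}$ and $y_{p;1}$, is \emph{homotopy initial}: the classifying space of the comma category $i\downarrow p^{s}$ is contractible for every object $p^{s}\in\Theta_{p}$. Applied levelwise in orthogonal spectra this immediately yields the claimed level equivalence.

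First I would put morphisms in $\Theta_{p}$ in normal form. Abbreviating the generators as $x$ and $y$ (with domain determined by context), the relations~\eqref{eq:xy} restricted to $p$-power integers say that composites of $x$'s are $x$-powers, composites of $y$'s are $y$-powers, and that any $x$ commutes past any $y$ (this last from the middle relation). Consequently every morphism $p^{s}\to p^{s+k}$ can be written uniquely as $x^{j}\circ y^{k-j}$ with $0\leq j\leq k$, and so $|\mathrm{Hom}_{\Theta_{p}}(p^{s},p^{s+k})|=k+1$.

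Next I would compute $i\downarrow p^{s}$. For $s=0$ it is the terminal category. For $s\geq 1$ the objects are $(1,x^{a}y^{s-a})$ for $0\leq a\leq s$ and $(p,x^{b}y^{s-1-b})$ for $0\leq b\leq s-1$; only identity morphisms go between same-type objects and there are no morphisms from $(p,-)$ to $(1,-)$. A morphism $(1,x^{a}y^{s-a})\to (p,x^{b}y^{s-1-b})$ is a choice of generator $g\in\{x,y\}$ with $x^{b}y^{s-1-b}\circ g=x^{a}y^{s-a}$, which by the normal form exists precisely when $g=y$ and $a=b$, or $g=x$ and $a=b+1$. The resulting shape is the zig-zag
\[
(1,y^{s})\to (p,y^{s-1})\leftarrow (1,xy^{s-1})\to (p,xy^{s-2})\leftarrow \cdots \leftarrow (1,x^{s}),
\]
a tree on $2s+1$ vertices, which therefore has contractible classifying space.

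Finally, the classical cofinality theorem for homotopy limits (applied at each level) gives that the restriction map
\[
\lholim_{\Theta_{p}}F\longrightarrow \lholim_{J}F|_{J}=\hoequalizer\bigl(x_{p;1},y_{p;1}\colon F(1)\to F(p)\bigr)
\]
is a level equivalence of orthogonal spectra. The only substantive step is identifying the shape of $i\downarrow p^{s}$; the commutation relation in $\Theta_{p}$ is precisely what collapses this comma category to a contractible tree rather than producing higher-dimensional cells, and it is the key input one might initially overlook.
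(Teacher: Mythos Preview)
Your proof is correct and follows essentially the same approach as the paper: both invoke Bousfield--Kan cofinality, put morphisms of $\Theta_{p}$ into the commuting-monomial normal form $x^{j}y^{k-j}$, and identify the comma category $i\downarrow p^{s}$ as a contractible zigzag. Your count of $2s$ edges on $2s+1$ vertices is in fact more accurate than the paper's ``$s+1$ $1$-simplices,'' but the substance is identical.
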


\begin{proof}
By \cite[XI.9.1]{BousfieldKan}, it suffices to see that the inclusion
of $\{1,p\}$ into $\Theta_{p}$ is left cofinal, that is, for every
$s$, the category of maps in $\Theta_{p}$ from $\{1,p\}$ to $p^{s}$
has weakly contractible nerve.  The maps from $1$ to $p^{s}$ are in
one-to-one correspondence with monomials of the form $x^{i}y^{s-i}$
for $0\leq i\leq s$ and maps from $p$ to $p^{s}$ are in one-to-one
correspondence with monomials of the form $x^{i}y^{s-i-1}$ for $0\leq
i\leq s-1$.  For $x^{s}$ and $y^{s}$, there is exactly one
non-identity map in the category, namely $x^{s}\to x^{s-1}$ and
$y^{s}\to y^{s-1}$, respectively.  For every other monomial
$x^{i}y^{s-i}$, there are exactly two maps, $x_{p;1}$ and $y_{p;1}$,
to $x^{i-1}y^{s-i}$ and $x^{i}y^{s-i-1}$, respectively.  The nerve of
this category is therefore a generalized interval with $s+1$
$1$-simplices.
\end{proof}

Observe that the limit of the functor
$F^{\G}(\rho^{*}_{m}\Phi^{C_{m}}X,Y)$ from $\Theta$ to orthogonal
spectra is precisely $F_{Cyc}(X,Y)$.  As in theorem~\ref{thm:newFp}, the
canonical map from the limit to the homotopy limit is a level equivalence
when $X$ is cofibrant and therefore
$F^{h}_{Cyc}(X,Y)$ provides an
explicit model of the derived mapping space in the category of
pre-cyclotomic spectra when $Y$ is fibrant.

\begin{thm}\label{thm:newF}
Let $X$ and $Y$ be pre-cyclotomic spectra.  If $X$ is cofibrant, then
the canonical map from the limit to the homotopy limit
$F_{Cyc}(X,Y)\to F^{h}_{Cyc}(X,Y)$ is a level equivalence.
\end{thm}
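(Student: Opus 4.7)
The plan is to mimic the proof of Theorem~\ref{thm:newFp} with the indexing category $\Theta$ replacing the parallel pair and the free monad $\bC$ replacing $\pC$. First, I would reduce to the case $X = \bC A$ for $A$ an $\aF_{\fin}$-cofibrant orthogonal $\G$-spectrum. Both $F_{Cyc}(-, Y)$ and $F^{h}_{Cyc}(-, Y)$ convert pushouts over the generating cofibrations $\bC F_{V}(\G/C \times S^{n-1})_{+} \to \bC F_{V}(\G/C \times D^{n})_{+}$ and sequential colimits of such cofibrations into pullbacks and towers of fibrations, using \cite[III.2.7]{MM} together with the compatibility of $F^{\G}(-, Y)$ with wedges. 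So a standard cell-induction followed by closure under retracts reduces the theorem to the free case.

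For the base case, I would identify both sides explicitly. By Proposition~\ref{prop:freeadj}, the strict limit satisfies $F_{Cyc}(\bC A, Y) \cong F^{\G}(A, Y)$. Using Proposition~\ref{prop:logical} (which applies since $A$ is cofibrant) together with the compatibility of $\rho^{*}_{m}\Phi^{C_{m}}$ with coproducts of cofibrants, the $\Theta$-diagram computing $F^{h}_{Cyc}(\bC A, Y)$ simplifies to
\[
m \;\longmapsto\; \prod_{n \geq 1} F^{\G}(\rho^{*}_{mn}\Phi^{C_{mn}} A, Y).
\]
The theorem then reduces to showing that the canonical map from the strict limit $F^{\G}(A, Y)$ to the homotopy limit of this free diagram over $\Theta$ is a level equivalence.

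The main obstacle is establishing this last equivalence, which is where the proof must depart from Theorem~\ref{thm:newFp} to handle the more elaborate combinatorics of $\Theta$. I would handle it by a cofinality argument in the style of Proposition~\ref{prop:Thetap}, bootstrapped from the single-prime case to all of $\Theta$ by induction over the set of primes dividing $m$. The key input is that every morphism of $\Theta$ factors canonically through its prime-power constituents by the relations~\eqref{eq:xy}, so one can filter $\Theta$ by the set of primes involved and reduce, via iterated application of \cite[XI.9.1]{BousfieldKan}, to products of the generalized intervals appearing in the proof of Proposition~\ref{prop:Thetap}. The free structure on $\bC A$ supplies the necessary sections making the relevant undercategories weakly contractible for this particular diagram. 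This produces a generalized mapping-path-object presentation of the homotopy limit, showing that the strict limit $F^{\G}(A, Y)$ is level equivalent to it; combined with the cell-induction step, this proves the theorem.
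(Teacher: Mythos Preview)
Your cell-induction reduction and your identification of the $\Theta$-diagram for $X=\bC A$ as $m\mapsto \prod_{k\geq 1}F^{\G}(\rho^{*}_{mk}\Phi^{C_{mk}}A,Y)$ are both correct and match the paper's argument. The gap is in your final paragraph, where the cofinality argument you sketch is too vague to handle the fact that $\Theta$ involves infinitely many primes.

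The paper fills this gap by exploiting the specific form of the generating cells rather than working with a general $\aF_{\fin}$-cofibrant $A$. For $A=F_{V}(\G/C_{n}\times B)_{+}$ one has $(\G/C_{n})^{C_{j}}=\emptyset$ whenever $j\nmid n$, so $\Phi^{C_{j}}A=*$ for such $j$, and hence $\rho^{*}_{m}\Phi^{C_{m}}(\bC A)=*$ for every $m>1$ that is coprime to $n$. Writing $P=\{p_{1},\dotsc,p_{r}\}$ for the distinct prime factors of $n$ and $\Theta_{\hat P}\subset\Theta$ for the full subcategory of integers coprime to $n$, this vanishing collapses the homotopy limit over $\Theta_{\hat P}$ to its value at $1$. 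The Fubini decomposition $\Theta\cong\Theta_{p_{1}}\times\dotsb\times\Theta_{p_{r}}\times\Theta_{\hat P}$ then reduces the problem to a \emph{finite} iteration of Proposition~\ref{prop:Thetap}, after which the map in question is identified with the inclusion of $F^{\G}(A,Y)$ into an iterated mapping path object, exactly as in the proof of Theorem~\ref{thm:newFp}. Your phrase ``induction over the set of primes dividing $m$'' may be gesturing at this, but you never say which $m$ and you never invoke the vanishing that makes the induction terminate. The assertion that ``the free structure on $\bC A$ supplies the necessary sections'' does not substitute for it: knowing that the $x$-maps are split product projections does not by itself collapse a homotopy limit over all of $\Theta$ to its strict limit, and for a general $\aF_{\fin}$-cofibrant $A$ (which can involve cells of every orbit type) there is no finite set of primes to induct on.
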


\begin{proof}
By the usual argument, this reduces to the case of the domain and
codomain of a cell in pre-cyclotomic spectra, $X=\bC F_{V}(G/C_{n}\times
B)_{+}$ where $B=S^{q-1}$ or $D^{q}$.  Let $P=\{p_{1},\dotsc,p_{r}\}$ be the
distinct prime factors of $n$, and let $\Theta_{\hat P}$ be the full
subcategory of $\Theta$ consisting of the integers not divisible by
the primes in $P$.  Then we have that 
\[
\Theta = \Theta_{p_{1}}\times \dotsb \times \Theta_{p_{r}}\times
\Theta_{\hat P},
\]
and the Fubini theorem for homotopy limits gives a level equivalence 
\[
F^{h}_{Cyc}(X,Y)\simeq 
\holim_{\Theta_{p_{1}}}\dotsb \holim_{\Theta_{p_{r}}}
\holim_{\Theta_{\hat P}}F^{\G}(\rho^{*}_{m}\Phi^{C_{m}}X,Y),
\]
compatibly with the map from the limit.  Since $\Phi^{C_{m}}X=*$ for
all $m$ in $\Theta_{\hat P}$ except $m=1$, the equivalence above
reduces to a level equivalence 
\[
F^{h}_{Cyc}(X,Y)\simeq 
\holim_{\Theta_{p_{1}}}\dotsb \holim_{\Theta_{p_{r}}}
F^{\G}(\rho^{*}_{m}\Phi^{C_{m}}X,Y).
\]
Using the previous proposition, we can identify this homotopy limit as
an iterated homotopy equalizer.  As in the proof of
Theorem~\ref{thm:newFp}, we can then identify the map in question as
the map from $F_{V}(\G/C_{n}\times B)_{+}$ to an iterated mapping
path object.
\end{proof}

The analogue of Corollary~\ref{cor:newFp} also holds in this context: 

\begin{cor}
Let $X$ be a pre-cyclotomic spectrum whose underlying orthogonal
$\G$-spectrum is $\aF_{\fin}$-cofibrant, and let $\overline X\to X$ be a
cofibrant replacement in the model* category of pre-cyclotomic
spectra.  Then for any fibrant pre-cyclotomic spectrum $Y$, the
maps
\[
F^{h}_{Cyc}(X,Y)\to F^{h}_{Cyc}(\overline X,Y)\from F_{Cyc}(\overline X,Y)
\]
are weak equivalences; thus, $F^{h}_{Cyc}(X,Y)$ represents the derived
mapping spectrum when $Y$ is fibrant.
\end{cor}

\section{Corepresentability of \texorpdfstring{$TR$}{TR} and \texorpdfstring{$TC$}{TC}}\label{sec:corep}

We apply the work of the previous section to deduce corepresentability
results for $TC$ and related functors.  We begin with a brief review
of the constructions, starting with the ``restriction'' and
``Frobenius'' maps. 

Following \cite{MM}, we omit notation for the forgetful functor from
orthogonal $\G$-spectra to non-equivariant orthogonal spectra.

\begin{defn}\label{defn:R}
Let $X$ be cyclotomic spectrum. The
\term{restriction map} $R_{n}$ is the map of non-equivariant orthogonal spectra
\[
R_{n}\colon X^{C_{mn}}\to
(\rho^{*}_{n}\Phi^{C_{n}})^{C_{m}}\to
X^{C_{m}}
\]
induced by $\cyc_{n}$.  The \term{Frobenius map} $F_{n}$ is the map of 
non-equivariant orthogonal spectra
\[
F_{n}\colon X^{C_{mn}}\to X^{C_{m}}
\]
induced by the inclusion of fixed points.
\end{defn}

By convention, $R_{1}=F_{1}=\id\colon X\to X$.  It is easy to see that
$R_{n}$ and $F_{n}$ satisfy the following formulas.
\[
R_{m}R_{n}=R_{mn}\qquad F_{m}R_{n}=R_{n}F_{m}\qquad F_{m}F_{n}=F_{mn}
\]
We let $\mathbb{I}$ be the category with objects the positive natural
numbers and maps $R_{n}, F_{n} \colon nm \to m$ subject to the
relations above.  We write $R$ for the subcategory consisting of all the objects
and the restriction maps, and $F$ for the subcategory consisting of
all the objects and the Frobenius maps.

A cyclotomic spectrum $X$ determines a functor $m\mapsto X^{C_{m}}$ from 
$\mathbb{I}$ to non-equivariant orthogonal spectra.

\begin{defn}\label{defn:TC}
For a fibrant cyclotomic spectrum $X$, let 
\begin{align*}
TR(X)&=\holim_{R} X^{C_{m}} \\
TF(X)&=\holim_{F} X^{C_{m}} \\
TC(X)&=\holim_{\bI} X^{C_{m}}.
\end{align*}
\end{defn}

We have defined $TR$, $TF$, and $TC$ as point-set functors on the
subcategory of fibrant objects.  By taking fibrant approximations, we
obtain right derived functors $TR$, $TF$, and $TC$ from the homotopy
category of cyclotomic spectra to the stable category.

For a $p$-cyclotomic spectrum, we have the maps $R_{p}$, $F_{p}$, and
the corresponding full subcategories $R_{p}$, $F_{p}$, and $\bI_{p}$ of
$R$, $F$, and $\bI$, respectively, consisting of the subset of objects $\{1,p,p^{2},\dotsc\}$.

\begin{defn}
For a fibrant $p$-cyclotomic spectrum $X$, let 
\begin{align*}
TR(X;p)&=\holim_{R_{p}} X^{C_{p^{m}}} \\
TF(X;p)&=\holim_{F_{p}} X^{C_{p^{m}}} \\
TC(X;p)&=\holim_{\bI_{p}} X^{C_{p^{m}}}.
\end{align*}
\end{defn}

We now explain how to realize $TR$ and $TC$ as suitable mapping
objects in cyclotomic spectra.

\begin{cons}\label{cons:STRp}
Let $S_{TR;p}$ be the $p$-cyclotomic spectrum with underlying
orthogonal $\G$-spectrum 
\[
S_{TR;p}=\bigvee_{s\geq 0} F_{0}(\G/C_{p^{s}})_{+}
\]
and structure map from 
\begin{multline*}
\rho^{*}_{p}\Phi^{C_{p}}S_{TR;p} \cong
\bigvee_{s\geq 0} \rho^{*}_{p}\Phi^{C_{p}}(F_{0}(\G/C_{p^{s}})_{+})
\cong \bigvee_{s\geq 1} F_{0}(\G/C_{p^{s-1}})_{+}
=\bigvee_{s\geq 0} F_{0}(\G/C_{p^{s}})_{+}
\end{multline*}
to $S_{TR;p}$ induced by the canonical isomorphism.
\end{cons}

For any orthogonal $\G$-spectrum $X$, we have a canonical natural
isomorphism 
\[
F^{\G}(S_{TR;p},X)\iso\prod_{s\geq 0}X^{C_{p^{s}}}.
\]
For $X$ a $p$-cyclotomic spectrum,  $F^{h}_{Cyc}(S_{TR;p},X)$ is then
the homotopy equalizer
\[
F^{h}(S_{TR;p},X) \cong \hoequalizer 
\relax\left[
\xymatrix@C-1pc{
\prod X^{C_{p^{s}}} \ar@<-.5ex>[r]\ar@<.5ex>[r]
&\prod X^{C_{p^{s}}}
} \right]
\]
where one map is the identity (induced by the $p$-cyclotomic structure
map on $S_{TR;p}$) and the other is the product of the maps
$R_{p}\colon X^{C_{p^{s}}}\to X^{C_{p^{s-1}}}$.  This construction is
the ``mapping microscope'' of the maps $R_{p}$, which is a model for
the homotopy limit over $R_{p}$~\cite[2.2.8]{MayPonto}.  Since the
underlying orthogonal $\G$-spectrum of $S_{TR;p}$ is
$\aF_{p}$-cofibrant, we obtain the following theorem as a corollary of
Theorem~\ref{thm:newFp}.

\begin{thm}\label{thm:TRprep}
The right derived functor of $TR(-;p)$ is corepresentable in the homotopy
category of $p$-cyclotomic spectra, with corepresenting object $S_{TR;p}$.
\end{thm}

For $TC(-;p)$, we begin with the following pre-$p$-cyclotomic spectrum.

\begin{cons}\label{cons:STCp}
Let $S_{TC^{s};p}$ (for $s>0$) be the pre-$p$-cyclotomic spectrum whose
underlying orthogonal $\G$-spectrum is $F_{0}(\G/C_{p^{s}})_{+}$ and
whose structure map is
\[
\rho^{*}_{p}\Phi^{C_{p}}S_{TC^{s};p} \cong F_{0}(\G/C_{p^{s-1}})_{+}\to
F_{0}(\G/C_{p^{s}})_{+}=S_{TC^{s};p}
\]
induced by the quotient map $\G/C_{p^{s-1}}\to\G/C_{p^{s}}$.
The map $S_{TC^{s};p}\to S_{TC^{s+1};p}$ induced by the quotient is
then a map of pre-$p$-cyclotomic spectra; let $S_{TC;p}$ be the
telescope. 
\end{cons}

We can identify $TC(-;p)$ in terms of maps of pre-$p$-cyclotomic
spectra out of $S_{TC;p}$.

\begin{thm}\label{thm:STCp}
For pre-$p$-cyclotomic spectra $X$, there is a natural 
level equivalence
\[
\holim_{\bI_{p}}X^{C_{p^{m}}}\overto{\simeq}
F^{h}_{Cyc}(S_{TC;p},X).
\]
Thus, if $X$ is a fibrant $p$-cyclotomic spectrum, there is a natural
level equivalence
\[
TC(X;p)\overto{\simeq}F^{h}_{Cyc}(S_{TC;p},X).
\]
\end{thm}

\begin{proof}
Since $S_{TC;p}$ is the telescope of $S_{TC^{s};p}$, commuting
homotopy limits, we have that $F^{h}_{Cyc}(S_{TC;p},X)$ is the mapping
microscope of the orthogonal spectra $F^{h}_{Cyc}(S_{TC^{s};p},X)$.
Since $F^{\G}(S_{TC^{s};p},X)$ is canonically isomorphic to
$X^{C_{p^{s}}}$ and\break
$F^{\G}(\rho^{*}_{p}\Phi^{C_{p}}S_{TC^{s};p},X)$ is canonically
isomorphic to $X^{C_{p^{s-1}}}$, 
$F^{h}_{Cyc}(S_{TC^{s};p},X)$ is canonically isomorphic to a
homotopy equalizer of the form
\[
F^{h}_{Cyc}(S_{TC^{s};p},X) = \hoequalizer 
\relax\left[
\xymatrix@C-1pc{
X^{C_{p^{s}}} \ar@<-.5ex>[r]\ar@<.5ex>[r]
&X^{C_{p^{s-1}}}
} \right],
\]
with the maps induced by the structure map on $S_{TR^{s};p}$ and the
structure map on $X$.  The structure map on $X$ induces the map
$R_{p}\colon X^{C_{p^{s}}}\to X^{C_{p^{s-1}}}$ and the structure map
on $S_{TR^{s};p}$ induces the map $F_{p}\colon X^{C_{p^{s}}}\to
X^{C_{p^{s-1}}}$.  Therefore, we have a natural isomorphism 
\[
F^{h}_{Cyc}(S_{TC;p},X) \iso 
 \Mic_s \hoequalizer 
\relax\left[
\xymatrix@C-1pc{
X^{C_{p^{s}}} \ar@<-.5ex>[r]\ar@<.5ex>[r]
&X^{C_{p^{s-1}}}
} \right]
\]
where the maps on the homotopy equalizers are induced by the inclusion
of fixed points, i.e., by the maps $F_{p}$.  It is a standard fact in
$TC$ theory that the above homotopy limit is level equivalent to the
homotopy limit $\holim_{\bI_{p}}X^{C_{p^{m}}}$; we now review the
argument.  Let $\bI_{p}^{\leq s}$ denote the full subcategory of
$\bI_{p}$ consisting of the objects $m$ for $m\leq s$ and let
$\bI_{p}^{\{s-1,s\}}$. denote the full subcategory of $\bI_{p}$
consisting of the objects $s$ and $s-1$.  The argument of
Proposition~\ref{prop:Thetap} then shows that the inclusion of
$\bI_{p}^{\{s-1,s\}}$ in $\bI_{p}^{\leq s}$ induces a level
equivalence
\[
\holim_{\bI_{p}^{\leq s}}X^{C_{p^{m}}}\to
\holim_{\bI_{p}^{\{s-1,s\}}}X^{C_{p^{m}}}=
\hoequalizer 
\relax\left[
\xymatrix@C-1pc{
X^{C_{p^{s}}} \ar@<-.5ex>[r]\ar@<.5ex>[r]
&X^{C_{p^{s-1}}}
} \right].
\]
For fixed $s\geq 0$, the diagram
\[
\xymatrix@-1pc{%
\holim_{\bI_{p}^{\leq s+1}}X^{C_{p^{m}}}\ar[r]\ar[d]
&\holim_{\bI_{p}^{\{s,s+1\}}}X^{C_{p^{m}}}\ar[d]
\\
\holim_{\bI_{p}^{\leq s}}X^{C_{p^{m}}}\ar[r]
&\holim_{\bI_{p}^{\{s-1,s\}}}X^{C_{p^{m}}}
}
\]
commutes up to a canonical natural homotopy, where the vertical map
on the left is induced by the inclusion of $\bI_{p}^{\leq s}$ in
$\bI_{p}^{\leq s+1}$ and the
vertical map on the right is the map in the homotopy limit system
for $F_{Cyc}(S_{TC;p},X)$.  To see this, note that the down-then-right
map is induced by the natural inclusion $i$ of $\bI_{p}^{\{s-1,s\}}$
in $\bI_{p}^{\leq s+1}$ and the right-then-down map is induced by the
functor $j\colon \bI_{p}^{\{s-1,s\}}\to \bI_{p}^{\leq s+1}$ that sends
$s-1$ to $s$ and $s$ to $s+1$ (with the morphisms $F_{p}$ and $R_{p}$
from $s$ to $s-1$ going to the corresponding morphisms from $s+1$ to
$s$) together with $F_{p}$ viewed as a natural transformation to
the functor $X^{C_{p^{m}}}$ on $\bI_{p}^{\{s-1,s\}}$ from $j^{*}$ of the
functor $X^{C_{p^{m}}}$ on $\bI_{p}^{\leq s+1}$.  The maps $F_{p}$
then become a natural transformation from $i$ to $j$ and induce a
homotopy making the diagram commute, naturally in $X$.  Incorporating
these canonical natural homotopies, we get a canonical map
\[
\Mic_{s}\holim_{\bI_{p}^{\leq s}}X^{C_{p^{m}}}
\to
\Mic_{s}\holim_{\bI_{p}^{\{s-1,s \}}}X^{C_{p^{m}}}
\iso
F_{Cyc}(S_{TC;p},X)
\]
that is a level equivalence and natural in $X$.  The composite
\[
\holim_{\bI_{p}}X^{C_{p^{m}}}\iso
\lim\nolimits_{s}\holim_{\bI_{p}^{\leq s}}X^{C_{p^{m}}}
\overto{\simeq}
\Mic_{s}\holim_{\bI_{p}^{\leq s}}X^{C_{p^{m}}}
\overto{\simeq}
F_{Cyc}(S_{TC},X)
\]
is then the natural level equivalence in the statement.
\end{proof}

The preceding theorem now leads to the following characterization.

\begin{thm}\label{thm:weakrep}
For $X$ a fibrant weak $p$-cyclotomic spectrum, $TC(X;p)$ is naturally
weakly equivalent to $F^{h}_{Cyc}(S\sma E\aF_{p+},X)$, which represents
the derived mapping spectrum $\bR F_{Cyc}(S,X)$. 
\end{thm}

\begin{proof}
We have compatible canonical maps from $S_{TC^{s};p}$ to the sphere
spectrum $S$, induced by the collapse map $\G/C_{p^{s}}\to *$.  Using
the canonical cyclotomic structure on $S$ of Example~\ref{exa:sphere},
we obtain a canonical map of pre-$p$-cyclotomic spectra from
$S_{TC;p}$ to $S$.  Looking at mod $p$ homology, we see that the
unique map from the telescope of $\G/C_{p^{s}}$ to a point is a 
$p$-equivalence, and it follows that the map of pre-$p$-cyclotomic
spectra $S_{TC;p}\to S$ is a $p$-$\aF_{p}$-equivalence of
the underlying orthogonal $\G$-spectra.  We then have a zigzag of 
$p$-$\aF_{p}$-equivalences
\[
S_{TC;p}\from S_{TC;p}\sma E\aF_{p+}\to S\sma E\aF_{p+}
\]
where all of the above weak $p$-cyclotomic spectra have underlying
orthogonal $\G$-spectra that are cofibrant in both the $\aF_{p}$-local
model structure and in the $\aF_{p}$-local $p$-complete model
structure.  Since we have assumed that $X$ is fibrant in the
$\aF_{p}$-local $p$-complete model structure, the functors $F^{\G}(-,X)$
and $F^{\G}(\rho_{p}^{*}\Phi^{C_{p}}(-),X)$ convert the
$p$-$\aF_{p}$-equivalences of weak $p$-cyclotomic spectra above to
weak equivalences of orthogonal spectra.  Thus, we obtain a zigzag of
weak equivalences
\[
F^{h}_{Cyc}(S_{TC;p},X)\to F^{h}_{Cyc}(S_{TC;p}\sma E\aF_{p+},X)
\from F^{h}_{Cyc}(S\sma E\aF_{p+},X).
\]
Theorem~\ref{thm:STCp} gives a weak equivalence with $TC(X;p)$ and
Corollary~\ref{cor:newFp} shows that $F^{h}_{Cyc}(S\sma E\aF_{p+},X)$
represents the derived mapping spectrum $\bR F_{Cyc}(S,X)$. 
\end{proof}

We also have the following standard relationship between
$p$-completion and mapping spectra.

\begin{lem}\label{lem:pcompF}
Let $X$ be a fibrant $p$-cyclotomic spectrum and $X\phat$ a fibrant
replacement for $X$ in the category of weak $p$-cyclotomic spectra.
For any pre-$p$-cyclotomic spectrum $Y$ that is $\aF_{p}$-cofibrant as an
orthogonal $\G$-spectrum, $F^{h}_{Cyc}(Y,X)\to F^{h}(Y,X\phat)$ is a
$p$-equivalence. 
\end{lem}

\begin{proof}
Because the orthogonal $\G$-spectra $Y$ and $\rho_{p}^{*}\Phi^{C_{p}}Y$ are
$\aF_{p}$-cofibrant and the orthogonal $\G$-spectra $X$ and $X\phat$
are $\aF_{p}$-fibrant, the non-equivariant orthogonal spectra $F^{\G}(Y,X)$,
$F^{\G}(\rho^{*}_{p}\Phi^{C_{p}}Y,X)$, $F^{\G}(Y,X\phat)$, and
$F^{\G}(\rho^{*}_{p}\Phi^{C_{p}}Y,X\phat)$ are all fibrant.
For fibrant
orthogonal spectra, a map is a
$p$-equivalence if and only if the induced map on $F(M^{1}_p,-)$ is a
weak equivalence.   It follows that the maps
\[
F^{\G}(Y,X)\to 
F^{\G}(Y,X\phat)\qquad \text{and}\qquad 
F^{\G}(\rho^{*}_{p}\Phi^{C_{p}}Y,X)\to
F^{\G}(\rho^{*}_{p}\Phi^{C_{p}}Y,X\phat)
\]
are $p$-equivalences, and that the induced map on homotopy equalizers
\[ F^{h}_{Cyc}(Y,X)\to F^{h}(Y,X\phat) \]
is a $p$-equivalence.
\end{proof}

Finally, to prove Theorem~\ref{thm:main}, we need the following standard fact
about the relationship between $TC(-)\phat$ and $TC(-;p)$, rewritten
in the terminology of Section~\ref{sec:model}. 

\begin{thm}\label{thm:tcptc}
Let $X$ be a fibrant $p$-cyclotomic spectrum and $X\phat$ a fibrant
replacement for $X$ in the category of weak $p$-cyclotomic spectra.
Then the canonical map $TC(X;p)\to TC(X\phat;p)$ is a $p$-equivalence
and $TC(X\phat;p)$ is fibrant in the $p$-complete model structure on
orthogonal spectra.
\end{thm}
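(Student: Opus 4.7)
The plan is to promote the non-equivariant $p$-equivalence $X\to X\phat$ to a $p$-$\aF_{p}$-equivalence of underlying orthogonal $\G$-spectra, then translate this to a levelwise $p$-equivalence on the tower defining $TC(-;p)$, and finally pass to homotopy limits. Both $X$ (which is $p$-cyclotomic, hence weak $p$-cyclotomic by the mod-$p$ long exact sequence on homotopy groups) and $X\phat$ are weak $p$-cyclotomic spectra, so iterating the isomorphism $\pi^{\Phi C_{p^{n+1}}}_{q}(T\sma M^{1}_{p})\iso \pi^{\Phi C_{p^{n}}}_{q}(T\sma M^{1}_{p})$ supplied by Definition~\ref{defn:weak} identifies every $\pi^{\Phi C_{p^{n}}}_{q}(-\sma M^{1}_{p})$, naturally in weak cyclotomic maps, with the underlying non-equivariant $\pi_{q}(-\sma M^{1}_{p})$. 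By Theorem~\ref{thm:weakmodel}, $X\to X\phat$ is a cyclotomic map and a $p$-equivalence of underlying non-equivariant orthogonal spectra, so via these identifications it induces an isomorphism on $\pi^{\Phi C_{p^{n}}}_{*}(-\sma M^{1}_{p})$ for every $n$. By Definition~\ref{defn:localequiv}, this is equivalent to being a $p$-$\aF_{p}$-equivalence, and also yields isomorphisms on every $\pi^{C_{p^{n}}}_{*}(-\sma M^{1}_{p})$.

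Next, I would interpret this levelwise. Because $X$ is $\aF_{p}$-fibrant (as a fibrant $p$-cyclotomic spectrum) and $X\phat$ is $\aF_{p}$-$p$-complete fibrant, for each $n$ one has $\pi^{C_{p^{n}}}_{*}(T)\iso\pi_{*}(T^{C_{p^{n}}})$, so the map $X^{C_{p^{n}}}\to (X\phat)^{C_{p^{n}}}$ of (non-equivariant) orthogonal spectra is a $p$-equivalence. Moreover, fibrancy of $X\phat$ in the $\aF_{p}$-local $p$-complete model structure on orthogonal $\G$-spectra forces each fixed-point spectrum $(X\phat)^{C_{p^{n}}}$ to be both an $\Omega$-spectrum and $p$-complete, hence fibrant in the $p$-complete model structure on orthogonal spectra (the non-equivariant case of Theorem~\ref{thm:ospflocalmod}).

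Finally, $TC(X;p)\to TC(X\phat;p)$ is the induced map on the homotopy limits $\holim_{\bI_{p}}X^{C_{p^{n}}}\to \holim_{\bI_{p}}(X\phat)^{C_{p^{n}}}$. Since $M^{1}_{p}$ is a finite CW complex, smashing with $M^{1}_{p}$ commutes with $\holim$ of orthogonal spectra, so a levelwise $p$-equivalence of towers yields a $p$-equivalence on $\holim$; and since fibrations are closed under homotopy limits, the homotopy limit of fibrant objects in the $p$-complete model structure on orthogonal spectra remains fibrant. The main obstacle is the first step: promoting a non-equivariant $p$-equivalence between weak $p$-cyclotomic spectra to an equivariant one depends essentially on the cyclotomic structure collapsing all geometric fixed-point homotopy groups onto the underlying ones with mod-$p$ coefficients. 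The remaining arguments are routine manipulations of fibrant replacements, fixed-point functors, and homotopy limits.
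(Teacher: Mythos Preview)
Your proposal is correct and follows essentially the same approach as the paper: establish that $X\to X\phat$ is a $p$-$\aF_{p}$-equivalence, deduce levelwise $p$-equivalences on categorical fixed points using fibrancy, and pass to the homotopy limit. Your first paragraph makes explicit (via the weak cyclotomic condition collapsing mod-$p$ geometric homotopy groups) what the paper leaves implicit in the model-structure setup of Theorems~\ref{thm:completemodelprecyc} and~\ref{thm:weakmodel}; and where you argue that smashing with the finite complex $M^{1}_{p}$ commutes with $\holim$, the paper uses the dual formulation via $F(M^{1}_{p},-)$, which is slightly cleaner at the point-set level but amounts to the same thing.
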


\begin{proof}
The $p$-equivalence statement follows from the previous lemma and
Theorem~\ref{thm:STCp}.
To see that $TC(X\phat;p)$ is fibrant in the $p$-complete model structure on
orthogonal spectra, we note that each
$(X\phat)^{C_{p^{s}}}=F^{\G}(F_{0}(\G/C_{p^{s}})_{+},X\phat)$ is
fibrant in the $p$-complete model structure of orthogonal spectra and
homotopy limits of fibrant objects are fibrant~\cite[18.5.2]{Hirschhorn}.
\end{proof}

Theorem~\ref{thm:main} is now an immediate consequence.

\begin{proof}[Proof of Theorem~\ref{thm:main}]
Theorem~\ref{thm:main} follows directly from the statements above, but to 
illustrate how the constructions fit together, we write out the
argument as a whole. Given a $p$-cyclotomic spectrum $X$, let
$\tilde{X}$ be a fibrant replacement in the model* category of
$p$-cyclotomic spectra and let $X\phat$ be a fibrant replacement of
$\tilde{X}$ in the model* category of weak $p$-cyclotomic spectra;
then $X\phat$ is also a fibrant replacement for $X$ in the model*
category of weak $p$-cyclotomic spectra.  We then have a commutative
diagram
\[
\xymatrix@R-1pc{%
F^{h}_{Cyc}(S\sma E\aF_{p+},\tilde{X})
\ar[r]^{\simeq_{p}} \ar[d]
&\ar[d]^{\simeq} F^{h}_{Cyc}(S\sma E\aF_{p+},X\phat) 
\\
F^{h}_{Cyc}(S_{TC;p}\sma E\aF_{p+},\tilde{X})
\ar[r]^{\simeq_{p}}
&F^{h}_{Cyc}(S_{TC;p}\sma E\aF_{p+},X\phat)
\\
\ar[u] F^{h}_{Cyc}(S_{TC;p},\tilde{X})
\ar[r]^{\simeq_{p}}
&F^{h}_{Cyc}(S_{TC;p},X\phat) \ar[u]_{\simeq}
\\
TC(\tilde{X};p)\ar[r]^{\simeq_{p}} \ar[u]
&TC(X\phat;p) \ar[u]_{\simeq}
}
\]
which (varying $X$) extends to a diagram of natural transformations from the
homotopy category of $p$-cyclotomic spectra to the stable category.
The maps marked ``$\simeq$'' are isomorphisms in the stable category
(q.v.~Theorem~\ref{thm:STCp} and the proof of Theorem~\ref{thm:weakrep})
and the maps marked ``$\simeq_{p}$'' become isomorphisms after
$p$-completion (q.v.~Lemma~\ref{lem:pcompF} and Theorem~\ref{thm:tcptc}).
The top left entry $F^{h}_{Cyc}(S\sma E\aF_{p+},\tilde{X})$ represents
the derived 
mapping spectrum $\bR F_{Cyc}(S,X)$ and the bottom left entry
represents the derived functor $TC(X;p)$.
\end{proof}

We have corresponding corepresentability results for cyclotomic spectra.

\begin{cons}\label{cons:STR}
Let $S_{TR}$ be the cyclotomic spectrum with underlying
orthogonal $\G$-spectrum 
\[
S_{TR}=\bigvee_{m\geq 0} F_{0}(\G/C_{m})_{+}
\]
and structure map $\cyc_{n}$ from 
\begin{multline*}
\rho^{*}_{n}\Phi^{C_{n}}S_{TR} \cong
\bigvee_{m \geq 0} \rho^{*}_{n}\Phi^{C_{n}}(F_{0}(\G/C_{m})_{+})
\cong \bigvee_{n\mid m, m \geq 0} \rho^{*}_{n}F_{0}(\G/C_{m})_{+}
\cong \bigvee_{m \geq 0} F_{0}(\G/C_{m})_{+}
\end{multline*}
to $S_{TR}$ induced by the canonical isomorphism.
\end{cons}

\begin{thm}\label{thm:TRpre}
The right derived functor of $TR(-)$ is corepresentable in the
homotopy category of cyclotomic spectra, with corepresenting object
$S_{TR}$. 
\end{thm}

\begin{proof}
For any pre-cyclotomic spectrum $X$, we have 
\begin{equation}\label{eq:TRpre}
F^{h}(S_{TR},X) \cong \lholim_{n\in \Theta} F^{\G}(\rho_{*}^{n}\Phi^{C_{n}}S_{TR},X)
\cong \lholim_{n\in \Theta} \prod_{m \geq 1} X^{C_{m}} 
\end{equation}
where the $x$ maps are the identity and the $y$ maps are induced by
the cyclotomic structure maps on $X$.  We show that this is level
equivalent to $\holim_{R}X^{C_{m}}$, which is $TR(X)$ when $X$ is
a fibrant cyclotomic spectrum.   As in the proof of
Theorem~\ref{thm:newF}, we 
use the Fubini theorem for homotopy limits combined with
Proposition~\ref{prop:Thetap} to compare the homotopy limit over
$\Theta_{p}$ with the homotopy equalizer of $x_{p},y_{p}$.

Let $P_{r}=\{p_{1},\dotsc,p_{r}\}$ denote the first $r$ prime numbers and let
$\Theta_{P_{r}}$ denote the full subcategory of $\Theta$ consisting of
the objects $n$ that only have elements of $P_{r}$ in their prime
factorization.  Then $\Theta_{P_{r}}$ is canonically isomorphic to
$\Theta_{p_{1}}\times \dotsb \times \Theta_{p_{r}}$.  Using the
Bousfield-Kan model for 
the homotopy limit, we have a canonical isomorphism 
\[
\lim\nolimits_{r}\holim_{\Theta_{p_{r}}}\dotsb \holim_{\Theta_{p_{1}}}F\iso
\lim\nolimits_{r}\holim_{\Theta_{P_{r}}}F\iso \holim_{\Theta}F
\]
for any functor $F$ from $\Theta$ to orthogonal spectra.  Likewise,
letting $R_{P_{r}}$ be the corresponding subcategory of $R$ maps, we
have a canonical isomorphism
\[
\lim\nolimits_{r}\holim_{R_{p_{r}}}\dotsb \holim_{R_{p_{1}}}F\iso
\lim\nolimits_{r}\holim_{R_{P_{r}}}F\iso \holim_{R}F.
\]
We note that all of the sequential limits above are limits of
towers of levelwise fibrations (since on the cosimplicial objects
at each cosimplicial level the map is a product projection).
Let
\[
F(p^{s_{1}},\dotsc,p^{s_{r}})=F^{\G}(\rho_{*}^{n}\Phi^{C_{n}}S_{TR},X),
\qquad n=p_{1}^{s_{1}}\dotsb p_{r}^{s_{r}}
\]
be the restriction of $F^{\G}(\rho_{*}^{n}\Phi^{C_{n}}S_{TR},X)$ to
$\Theta_{P_{r}}$.  Applying Proposition~\ref{prop:Thetap}, we have a
level equivalence 
\[
\holim_{\Theta_{p_{r}}}\dotsb \holim_{\Theta_{p_{1}}} F\to 
\hoequalizer_{x_{p_{r}},y_{p_{r}}}(\dotsb 
(\hoequalizer_{x_{p_{1}},y_{p_{1}}}F)\dotsb ).
\]
Applying~\eqref{eq:TRpre}, the homotopy equalizer over
$x_{p},y_{p}$ is the pullback of the diagram
\[
\xymatrix{%
&\prod_{m} (X^{C_{m}})^{I}\ar[d]\\
\prod_{m} X^{C_{m}}\ar[r]_-{(\id,R_{p})}
&\prod_{m} X^{C_{m}}\times \prod_{m} X^{C_{m}},
}
\]
which we can identify as the microscope of 
\[
s\mapsto \prod_{p\nmid m}X^{C_{mp^{s}}}
\]
over the maps $R_{p}$.  By
induction, we get compatible maps
\[
\holim_{\Theta_{p_{r}}}\dotsb
\holim_{\Theta_{p_{1}}}F(p^{s_{1}},\dotsc,p^{s_{r}})\to 
\prod\Mic_{s_{r}}\dotsb \Mic_{s_{1}} X^{C_{mn}},\qquad 
\]
where $n=p_{1}^{s_{1}}\dotsb p_{r}^{s_{r}}$
and the product is over $m$ not divisible by any element of $P_{r}$.
As we vary $r$, the tower on the right is again a tower of level
fibrations, and the limit is then isomorphic (via projection onto the
$m=1$ factor) to the limit
\[
\lim\nolimits_{r}\Mic_{s_{r}}\dotsb \Mic_{s_{1}} X^{C_{n}}.
\]
A functor out of $R_{p}$ is just a tower, so we have the canonical
level equivalence
\[
\holim_{R}X^{C_{n}}\iso
\lim\nolimits_{r}
\holim_{R_{p_{r}}}\dotsb \holim_{R_{p_{1}}} X^{C^{n}}\to 
\lim\nolimits_{r} \Mic_{s_{r}}\dotsb \Mic_{s_{1}}X^{C_{n}}.
\qedhere
\] 
\end{proof}

We next construct the representing pre-cyclotomic spectrum for $TC$.
Now instead of being a telescope, it will arise as the homotopy
colimit over the partially ordered set of positive
integers under divisibility, which we denote as $\bJ$.

\begin{cons}\label{cons:STC}
Let $S_{TC^{m}}$ (for $m\geq 1$) be the pre-cyclotomic spectrum whose
underlying orthogonal $\G$-spectrum is $F_{0}(\G/C_{m})_{+}$ and
whose structure map 
\[
\cyc_{n}\colon 
\rho_{*}^{n}\Phi^{C_{n}}S_{TC^{m}}=
F_{0}\rho_{*}^{n}(\G/C_{m})^{C_{n}}_{+}\to
F_{0}(\G/C_{m})_{+}=S_{TC^{m}}
\]
is either the trivial map $*\to S_{TC^{m}}$ if $n\nmid m$ or induced
by the quotient map 
\[
\rho_{*}^{n}(\G/C_{m})^{C_{n}}=\G/C_{m/n}\to \G/C_{m}
\]
when $n\mid m$.  For any $n\geq 1$, we have a map $S_{TC^{m}}\to S_{TC^{mn}}$
induced by the quotient $\G/C_{m}\to \G/C_{mn}$ that is 
a map of pre-cyclotomic spectra; let $S_{TC}$ be the homotopy colimit
of $S_{TC^{m}}$ over $\bJ$.
\end{cons}

We can identify $TC(-)$ in terms of maps of pre-cyclotomic
spectra out of $S_{TC}$.

\begin{thm}\label{thm:STC}
For pre-cyclotomic spectra $X$, there is a natural 
level equivalence
\[
\holim_{\bI}X^{C_{m}}\overto{\simeq}
F^{h}_{Cyc}(S_{TC},X).
\]
Thus, if $X$ is a fibrant cyclotomic spectrum, there is a natural
level equivalence
\[
TC(X)\overto{\simeq}F^{h}_{Cyc}(S_{TC},X).
\]
\end{thm}

\begin{proof}
Write $\bI(m)$ for the full subcategory of $\bI$ consisting of $m$ and
its divisors.  Looking at $F^{h}(S_{TC^{m}},X)$ and using the fact
that $\Phi^{C_{n}}S_{TC^{m}}=*$ for $n\nmid m$, we can identify
$F^{h}_{Cyc}(S_{TC^{m}},X)$ as the homotopy limit
\[
F^{h}_{Cyc}(S_{TC^{m}},X)=\holim_{\bI(m)} X^{C_{n}},
\]
with the usual interpretation of $F$ and $R$.
Since $S_{TC}=\hocolim_{\bJ}S_{TC^{m}}$, we have
\[
F^{h}_{Cyc}(S_{TC},X)\iso\holim_{m\in \bJ}\holim_{n\in \bI(m)} X^{C_{n}}.
\]
The natural map in the statement is then the map
\[
\holim_{\bI} X^{C_{n}}\iso \lim\nolimits_{m\in \bJ}\holim_{n\in \bI(m)} X^{C_{n}}
\to F^{h}_{Cyc}(S_{TC},X).
\]
Taking a cofinal sequence of $m$ in $\bJ$ and applying
\cite[XI.9.1]{BousfieldKan}, we see that the map is a level
equivalence. 
\end{proof}

The collapse maps $\G/C_{m}\to *$ induce a map of pre-cyclotomic
spectra from $S_{TC}$ to the cyclotomic spectrum $S$, which is clearly
a finite complete $\aF_{\fin}$-equivalence of the underlying
orthogonal $\G$-spectra.  The argument of Theorem~\ref{thm:weakrep}
now generalizes to prove the following theorem.

\begin{thm}\label{thm:weakrepglobal}
For $X$ a fibrant weak cyclotomic spectrum, $TC(X)$ is naturally
weakly equivalent to $F^{h}_{Cyc}(S\sma E\aF_{\fin+},X)$, which represents
the derived mapping spectrum $\bR F_{Cyc}(S,X)$. 
\end{thm}

\appendix

\section[The geometric fixed point functor]%
{A technical result on the geometric fixed point functor for
orthogonal \texorpdfstring{$G$}{G}-spectra}\label{app:geneqres}

Let $G$ be a compact Lie group and $U$ a $G$-universe, and consider
the category of orthogonal $G$-spectra modeled on $U$
\cite[\S II]{MM}.  Fix a closed normal subgroup $N\lhd G$.  
Proposition~4.7 in Chapter~V of~\cite{MM} says that for cofibrant
orthogonal $G$-spectra $X$ and $Y$, the canonical natural map (of orthogonal
$G/N$-spectra)
\[
\Phi^{N}X\sma \Phi^{N}Y\to \Phi^{N}(X\sma Y)
\]
is an isomorphism.  In this appendix, we generalize this statement to
the case when $X$ is not cofibrant.

\begin{lem}\label{lem:appb}
Let $X$ and $Y$ be orthogonal $G$-spectra and assume that $Y$ is
cofibrant.  The canonical natural map 
\[
\Phi^{N}X\sma \Phi^{N}Y\to \Phi^{N}(X\sma Y)
\]
is an isomorphism.
\end{lem}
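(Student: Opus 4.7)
The plan is a cell induction on $Y$. First, I would check that the two functors $F(Y) = \Phi^N X \sma \Phi^N Y$ and $G(Y) = \Phi^N(X \sma Y)$ from orthogonal $G$-spectra to orthogonal $G/N$-spectra preserve coproducts, pushouts along levelwise closed inclusions, sequential colimits of such inclusions, and retracts. For $G$ this is because $X \sma (-)$ is a left adjoint and (as recorded in Section~2) $\Phi^N$ preserves these colimits; for $F$ it is because $\Phi^N X \sma (-)$ is a left adjoint and again $\Phi^N$ preserves the listed colimits. Since the canonical natural transformation $F \to G$ is compatible with these colimits and $Y$ is a retract of a cell complex built from the generating cofibrations $F_V((G/H \times S^{n-1})_+) \to F_V((G/H \times D^n)_+)$, a cell induction reduces the claim to showing that the canonical map is an isomorphism when $Y$ has the form $F_V B$ for $B$ a based $G$-CW space.

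For the next reduction, I would use that both $F$ and $G$ commute with levelwise smash by a based $G$-space. Concretely, for any orthogonal $G$-spectrum $Z$ and based $G$-space $A$, the canonical map $\Phi^N Z \sma A^N \to \Phi^N(Z \sma A)$ is an isomorphism: this follows from the description of $\Phi^N$ as a left Kan extension of $\Fix^N$, together with the facts that levelwise $N$-fixed points commute with smash with based spaces and that left Kan extension commutes with smash by the constant spectrum $A^N$. Writing $F_V B = F_V S^0 \sma B$ as a levelwise smash and applying this observation to both sides then reduces the problem to the single base case $Y = F_V S^0$.

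For the base case, I would verify directly that the canonical map $\Phi^N X \sma F_{V^N} S^0 \to \Phi^N(X \sma F_V S^0)$ is an isomorphism. I expect this to be the main technical step. Using the coend formula $(X \sma F_V S^0)(W) = \int^U \sJ_G(U \oplus V, W) \sma X(U)$ and the left Kan extension formula for $\Phi^N$, both sides unpack to colimits built from $X(U)^N$ and the $N$-fixed parts of $\sJ_G$, and the required identification comes from the canonical orthogonal decomposition $V = V^N \oplus V^\perp$ together with the fact that the Thom complement bundle in $\sJ_G$ splits compatibly with $N$-fixed points. Since $F_V S^0$ is cofibrant, this case is the analogue of the base case handled in the proof of Proposition~V.4.7 of \cite{MM}, and a careful reading of that proof shows that the argument in this particular configuration (with $F_V S^0$ as the cofibrant factor) does not use any hypothesis on the other factor, and so applies to arbitrary $X$.
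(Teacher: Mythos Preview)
Your reduction to the base case $Y=F_{V}S^{0}$ is correct and matches the paper's approach exactly. The gap is in the base case itself: this is where all the work lies, and your deferral to \cite[V.4.7]{MM} does not suffice.

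The proof of \cite[V.4.7]{MM} proceeds by reducing \emph{both} factors to cells, so that in its base case one has $X=F_{U}A$ and $Y=F_{V}B$; the isomorphism then follows from $\Phi^{N}(F_{U}A)\cong F_{U^{N}}A^{N}$ together with $F_{U}A\sma F_{V}B\cong F_{U\oplus V}(A\sma B)$. That computation genuinely uses the free form of $X$ and does not extend to arbitrary $X$. So the claim that ``the argument in this particular configuration does not use any hypothesis on the other factor'' is not justified.

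The paper's treatment of the base case is the entire substance of the appendix. It first reduces from $\Phi^{N}$ to $\Fix^{N}$, using that left Kan extension along $\phi\colon \sJ_{E}\to \sJ_{J}$ is monoidal for the smash product of $\sJ_{E}$-spaces. It then computes both sides pointwise: one identifies $(X\sma F_{A}S^{0})(V)$ as the orbit space $\sJ_{G}(\bR^{n}\oplus A,V)\sma_{O(n)}X(\bR^{n})$ and $(\Fix^{N}X\sma \Fix^{N}(F_{A}S^{0}))(V)$ as $\sJ_{E}(W\oplus A,V)\sma_{O(W)^{N}}X(W)^{N}$, and then constructs an explicit continuous inverse to the comparison map. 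The delicate point, which your sketch does not address, is that taking $N$-fixed points does not commute with the $O(n)$-orbit quotient in any formal way. One must analyze by hand which pairs $(f,x)$ map to $N$-fixed elements of the quotient, observe that for such pairs the image $f(\bR^{n})\subset V$ is $N$-stable, and use this to produce an $N$-equivariant identification with $W$; checking that the resulting inverse is well defined and continuous is the actual content. Your phrase ``the Thom complement bundle in $\sJ_{G}$ splits compatibly with $N$-fixed points'' gestures toward the right ingredients but does not engage with this interaction between the orbit quotient and the fixed points.
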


The proof occupies the entirety of the appendix.

Since both sides commute with pushouts over Hurewicz cofibrations, the
statement reduces to the case when $Y=F_{A}B_{+}$ for $B$ a $G$-CW
complex (space) and $A<U$.  Since smashing with an unbased $G$-space
commutes appropriately with geometric fixed points~\cite[V.4.6,V.4.7]{MM}, 
\[
\Phi^{N}T\sma (B_{+})^{N}\iso \Phi^{N}(T\sma B_{+}),
\]
the statement reduces to the case when $B=*$, and $Y=F_{A}S^{0}$.

Following the notation of \cite[V.4.1]{MM}, we write $E$ for the
extension 
\[
1\to N\overto{\iota} G \overto{\epsilon} J\to 1,
\]
where $J = G/N$, and we write $\sJ_{E}$ for $(\sJ_{G})^{N}$.  Recall that the
geometric fixed point functor $\Phi^{N}X$ is formed from the
$\sJ_{E}$-space $\Fix^{N}X$ 
\[
\Fix^{N}X(V)=(X(V))^{N}
\] 
by enriched left Kan extension to $\sJ_{J}$ along the 
functor $\phi \colon \sJ_{E}\to \sJ_{J}$ sending $V$ to $V^{N}$.
The evident functor
\[
\oplus \colon \sJ_{E} \sma \sJ_{E} \to \sJ_{E}
\]
sending $(V,W)$ to $V\oplus W$ induces a smash product
of $\sJ_{E}$-spaces.  Since the diagram
\[
\xymatrix{%
\sJ_{E} \sma \sJ_{E}\ar[r]^-{\oplus} \ar[d]_{\phi \sma \phi} 
&\sJ_{E}\ar[d]^{\phi}\\
\sJ_{J} \sma \sJ_{J}\ar[r]_-{\oplus}
&\sJ_{J}
}
\]
commutes, enriched left Kan extension along $\phi$ takes the smash product of 
$\sJ_{E}$-spaces to the smash product of orthogonal $G/N$-spectra.
Thus, we are reduced to showing that the canonical natural map
\[
\Fix^{N}X \sma \Fix^{N}(F_{A}S^{0})\to \Fix^{N}(X\sma F_{A}S^{0})
\]
is an isomorphism.  

We now write formulas for $(\Fix^{N}X\sma \Fix^{N}(F_{A}S^{0}))(V)$ and
$\Fix^{N}(X\sma F_{A}S^{0})(V)$.  By definition $X\sma F_{A}S^{0}$ is
the enriched left Kan extension of the functor
$\sJ_{G}\sma \sJ_{G}\to \sJ_{G}$, the $V$th space of which, we can
write as the coequalizer 
\[
\xymatrix@R-1pc{%
\relax\displaystyle
\hskip-2em
\bigvee_{W,W',Z,Z'<U}
  \sJ_{G}(W'\oplus Z',V)\sma (\sJ(W,W')\sma \sJ_{G}(Z,Z'))
    \sma X(W)\sma F_{A}S^{0}(Z)
\hskip-2em \ar@<.5ex>[d] \ar@<-.5ex>[d]
\\
\relax\displaystyle 
\bigvee_{W,Z<U}
\sJ_{G}(W\oplus Z,V)\sma X(W)\sma F_{A}S^{0}(Z).
}
\]
From the universal property of $F_{A}S^{0}$, we have that
$F_{A}S^{0}(Z)=\sJ_{G}(A,Z)$, and so the coequalizer above simplifies to
the coequalizer
\[
\xymatrix@R-1pc{%
\relax\displaystyle
\bigvee_{W,W'<U}
  \sJ_{G}(W'\oplus A,V)\sma \sJ(W,W') \sma X(W)
\ar@<.5ex>[d] \ar@<-.5ex>[d]
\\
\relax\displaystyle 
\bigvee_{W<U}
\sJ_{G}(W\oplus A,V)\sma X(W).
}
\]
Let $a=\dim |A|$.  We have that $\sJ_{G}(W\oplus A,V)=*$ when $\dim
V<a$, and so the coequalizer is just the basepoint unless $\dim V\geq a$.  Let
$n=\dim V-a$.  Since in $\sJ_{G}$, every $W$ is isomorphic to
$\bR^{m}$ for some $m$, every map in $\sJ_{G}$ from $W\oplus A$ to $V$
factors through a map from $\bR^{n}\oplus A$ to $V$.  The coequalizer
above then reduces to the coequalizer
\[
\xymatrix@R-1pc{%
\relax\displaystyle
\bigvee_{W<U}
  \sJ_{G}(\bR^{n}\oplus A,V)\sma \sJ(W,\bR^{n}) \sma X(W)
\ar@<.5ex>[d] \ar@<-.5ex>[d]
\\
\relax\displaystyle 
\sJ_{G}(\bR^{n}\oplus A,V)\sma X(\bR^{n}).
}
\]
Using the same observation on $\bR^{n}$, we can identify this with
the orbit space
\[
\sJ_{G}(\bR^{n}\oplus A,V)\sma_{O(n)} X(\bR^{n}).
\]
An analogous argument for $(\Fix^{N}X\sma \Fix^{N}(F_{A}S^{0}))(V)$ yields an
identification as the orbit space
\[
\sJ_{E}(W \oplus A, V) \sma_{O(W)^{N}} X(W)^{N},
\]
where $V$ is isomorphic to $W\oplus A$ as an orthogonal
$N$-representation.  (In the case when no such decomposition exists,
$(\Fix^{N}X\sma \Fix^{N}(F_{A}S^{0}))(V)$ is just the basepoint.)
Thus, we must show that the map
\begin{equation}\label{eq:appmap}
\theta \colon \sJ_{E}(W \oplus A, V) \sma_{O(W)^{N}} X(W)^{N}\to
(\sJ_{G}(\bR^{n}\oplus A,V)\sma_{O(n)} X(\bR^{n}))^{N}.
\end{equation}
is a homeomorphism.   We prove this by constructing an explicit
inverse homeomorphism below.  First, it is useful to describe $\theta$
concretely in terms of elements.  For this,
choose and fix an orthonormal basis of $W$, which we can regard as a
(non-equivariant) isometric isomorphism $h\colon \bR^{n}\to W$.  For a
representative element $(f,x)$ on the left, with $f\in \sJ_{E}(W\oplus
A,V)$ and $x\in X(W)^{N}$, 
\[
\theta(f,x)=(f\circ (h\oplus \id_{A}),(h^{-1})_{*}x)
\]
where $(h^{-1})_{*}$ denotes the (non-equivariant) map
$X(W)\to X(\bR^{n})$ associated to $h^{-1}\colon W\to \bR^{n}$.
In the formula, 
\begin{gather*}
f\circ (h\oplus \id_{A})\in \sJ_{G}(\bR^{n}\oplus A,V),\\
(h^{-1})_{*}x\in X(\bR^{n}),
\end{gather*}
and by the abstract definition of the map, we must have that 
\[
(f\circ (h\oplus \id_{A}),(h^{-1})_{*}x)\in \sJ_{G}(\bR^{n}\oplus A,V)\sma_{O(n)} X(\bR^{n})
\]
is:
\begin{enumerate}
\item Independent of the choice of $h$,
\item Independent of the choice of representative $(f,x)$,
\item In the $N$-fixed point subspace $(\sJ_{G}(\bR^{n}\oplus A,V)\sma_{O(n)} X(\bR^{n}))^{N}$;
\end{enumerate}
however, it is trivial to check each of these facts explicitly in
terms of the elementwise formula for $\theta$, and doing this check
will (we hope) help give some insight into how the formula works.  For
(i), if we chose a different $h'\colon \bR^{n}\to W$, 
then we would have $h'=h\circ \B$ for some $\B$ in $O(n)$, and
\[
(f\circ (h'\oplus \id_{A}),(h^{\prime-1})_{*}x)=
(f\circ (h\oplus \id_{A})\circ (\B\oplus \id_{A}),\B^{-1}_{*}((h^{-1})_{*}x))
\]
represents the same element of $\sJ_{G}(\bR^{n}\oplus A,V)\sma_{O(n)}
X(\bR^{n})$ as $(f\circ (h\oplus \id_{A}),(h^{-1})_{*}x)$ since the right
$O(n)$ action on $\sJ_{G}(\bR^{n}\oplus A,V)$ is by right
multiplication by $\B\oplus \id_{A}$.
For (ii), any other representative of $(f,x)$ is $(f\circ
(\eta\oplus \id_{A}),\eta^{-1}_{*}x)$ for some $\eta\in O(W)^{N}$; 
\[
\theta(f\circ (\eta\oplus \id_{A}),\eta^{-1}_{*}x)=
(f\circ ((\eta\circ h)\oplus \id_{A}),((\eta\circ h)^{-1})_{*}x).
\]
Since $h'=\eta\circ h$ is a (non-equivariant) isometric isomorphism
$\bR^{n}\to W$, it follows by~(i) that this represents the same
element of $\sJ_{G}(\bR^{n}\oplus A,V)\sma_{O(n)} X(\bR^{n})$ as
$\theta(f,x)=(f\circ (h\oplus \id_{A}),(h^{-1})_{*}x)$. Finally for
(iii), first note that $\theta$ is $G$-equivariant; this follows from
the abstract definition of the map, but again, we check it explicitly:
The action of $G$ on $\sJ_{G}(\bR^{n}\oplus A,V)\sma_{O(n)}
X(\bR^{n})$ is diagonal, so
\begin{align*}
g\cdot\theta(f,x)&=(g\cdot(f\circ (h\oplus \id_{A})),g\cdot((h^{-1})_{*}x))\\
&=((g\cdot f)\circ ((g\cdot h)\oplus \id_{A}), ((g\cdot h)^{-1})_{*}(g\cdot x))\\
&=((g\cdot f)\circ (h\oplus \id_{A}),(h^{-1})_{*}(g\cdot x))
&\text{(by (i), taking }h'=g\cdot h\text{)}\\
&=\theta(g\cdot f,g\cdot x).
\end{align*}
In the second equality, the $G$-action distributes over composition
with the isometric isomorphism $h\oplus \id_{A}$ on 
$\sJ_{E}$ by inspection and over the action by the isometric isomorphism $h^{-1}$ 
on $X$ by the definition of orthogonal spectrum.
When $g\in N$, we have $g\cdot f=f$ since $f\in \sJ_{E}(W\oplus
A,V)=(\sJ_{G}(W\oplus A,V))^{N}$, and $g\cdot x=x$ since $x\in X(W)^{N}$.

We construct an inverse to the map $\theta$ of~\eqref{eq:appmap} as follows.  We note that
$\sJ_{G}(\bR^{n}\oplus A,V)$ is the $G$-equivariant space of (non-equivariant)
isometric isomorphisms from $\bR^{n}\oplus A$ to $V$ plus a disjoint
basepoint.  Let $\Psi$ denote the subset of $\sJ_{G}(\bR^{n}\oplus A,V)\sma
X(\bR^{n})$ which the quotient map sends into the $N$ fixed points
$(\sJ_{G}(\bR^{n}\oplus A,V)\sma_{O(n)} X(\bR^{n}))^{N}$.
Writing $(f,x)$ for a typical element of
$\sJ_{G}(\bR^{n}\oplus A,V)\sma X(\bR^{n})$, consider an element
$(f,x)\in \Psi$ where $x$ is not the basepoint.  Then for every
$\nu$ in $N$, 
\[
(\nu\cdot f,\nu\cdot x) = 
(f\circ (\B_{\nu}^{-1}\oplus \id_{A}),\B_{\nu*}x)
\]
for some $\B_{\nu}$ in $O(n)$; note that $\B_{\nu}$ is determined
uniquely since $O(n)$ acts freely on $\sJ_{G}(\bR^{n}\oplus A,V)$.  In
particular, the restriction of $f$ to $A$ must be $N$-equivariant and
the image $f(\bR^{n})$ of the restriction to $\bR^{n}$ must be stable
under the action of $N$.  Thus, $f$ induces an $N$-equivariant
isometric isomorphism from $f(\bR^{n})\oplus A$ to $V$.  In 
particular, in the case when $V$ does not contain a $N$-representation
isomorphic to $A$, $(\sJ_{G}(\bR^{n}\oplus A,V)\sma_{O(n)}
X(\bR^{n}))^{N}$ consists of just the basepoint, and the
map $\theta$ of~\eqref{eq:appmap} is a homeomorphism. 

Otherwise, we fix the $N$-equivariant isometric isomorphism $V\iso W\oplus A$,
and for each non-basepoint element $(f,x)$ in $\Psi$, we choose an
$N$-equivariant 
isometric isomorphism
\[
g_{(f,x)}\colon f(\bR^{n})\to W.
\]
Then $g^{-1}_{(f,x)}$ together with the restriction of $f$ to $A$
specify an element $\gamma_{(f,x)}$ in 
\[
(\sJ_{G}(W\oplus A,V))^{N}=\sJ_{E}(W\oplus A,V)
\]
and $g_{(f,x)}\circ f$ is an element of $\sJ_{G}(\bR^{n},W)$.  The
hypothesis that $(f,x)$ is in $\Psi$ then implies
that $(g_{(f,x)}\circ f)_{*}x\in X(W)$ is an $N$ fixed point:
\begin{multline*}
\nu\cdot ((g_{(f,x)}\circ f)_{*}x)
= (\nu\cdot (g_{(f,x)}\circ f))_{*}(\nu\cdot x)
= (\nu\cdot g_{(f,x)}\circ \nu \cdot f)_{*}(\nu\cdot x)\\
= (g_{(f,x)}\circ f\circ \B^{-1}_{\nu})_{*}(\B_{\nu*}x)
= (g_{(f,x)}\circ f)_{*}x.
\end{multline*}
We then get a basepoint preserving function $\psi$ on 
$\Psi$ that sends $(f,x)$ to 
\[
(\gamma_{(f,x)},(g_{(f,x)}\circ f)_{*}x) \in
\sJ_{E}(W \oplus A, V) \sma_{O(W)^{N}} X(W)^{N}.
\]
Because any two possible choices of $g_{(f,x)}$ are related by an
element of $O(W)^{N}$, it follows that $\psi$ is independent of the
choice of $g_{(f,x)}$.  It is easy to see that $\psi$ is continuous at
the basepoint and since for any non-basepoint $x$, we can choose
$g_{(f,x)}$ locally to be a continuous function of $(f,x)$, it follows
that $\psi$ is continuous.  Finally, for $\B \in O(n)$, 
\begin{multline*}
\psi(f\circ (\B^{-1}\oplus \id_{A}),\B_{*}x)=
(\gamma_{(f,x)},(g_{(f,x)}\circ f\circ \B^{-1})_{*}(\B_{*}x))\\
=(\gamma_{(f,x)},(g_{(f,x)}\circ f)_{*}x)
=\psi(f,x),
\end{multline*}
and so $\psi$ descends to a continuous map
\[
(\sJ_{G}(\bR^{n}\oplus A,V)\sma_{O(n)} X(\bR^{n}))^{N}\to
\sJ_{E}(W \oplus A, V) \sma_{O(W)^{N}} X(W)^{N}.
\]
Using the formula above, both composites of $\theta$ and
$\psi$ are easily seen to be 
the appropriate identity maps.


\end{document}